\newtheorem{thm}{Theorem}[section]
\newtheorem{defn}{Definition}[section]
\newtheorem{prop}{Proposition}[section]
\newtheorem{lem}{Lemma}[section]
\newtheorem{cor}{Corollary}[section]
\newtheorem{rem}{Remark}[section]
\theoremstyle{definition}
\newtheorem{exam}{Example}[section]
\newcommand{\diag}{\mbox{diag}}
\newcommand{\brac}[1]{\left(#1\right)}
\def\half{\frac 1 2}
\newcommand{\ct}{\cos\theta}
\newcommand{\st}{\sin\theta}
\title{On the rotational invariance and hyperbolicity of shallow water moment equations in two dimensions
}
\author{Matthew Bauerle\thanks{Department of Mathematics, Department of Computational Mathematics, Science and Engineering, Michigan State University, East Lansing, MI 48824, USA. (\email{bauerle2@msu.edu}).}  
\and Andrew J. Christlieb\thanks{Department of Mathematics, Department of Computational Mathematics, Science and Engineering, Michigan State University, East Lansing, Michigan 48824, USA.
  (\email{christli@msu.edu}).}
\and Mingchang Ding\thanks{Department of Computational Mathematics, Science and Engineering, Michigan State University, East Lansing, MI 48824, USA. (\email{dingmin2@msu.edu}).}
\and Juntao Huang\thanks{Department of Mathematics and Statistics, Texas Tech University, Lubbock, TX, 79409, USA. (\email{juntao.huang@ttu.edu}).}}
\begin{document}

\maketitle

\begin{abstract}
In this paper, we investigate the two-dimensional extension of a recently introduced set of shallow water models based on a regularized moment expansion of the incompressible Navier-Stokes equations \cite{kowalski2017moment,koellermeier2020analysis}. We show the rotational invariance of the proposed moment models with two different approaches. The first proof involves the split of the coefficient matrix into the conservative and non-conservative parts and  {proves} the rotational invariance for each part, while the second one relies on the special block structure of the coefficient matrices. With the aid of rotational invariance, the analysis of the hyperbolicity for the moment model in 2D is reduced to the real diagonalizability of the coefficient matrix in 1D. Then we analyze the real diagonalizability by deriving the analytical form of the characteristic polynomial.
 {We find that the moment model in 2D is hyperbolic in most cases and weakly hyperbolic in a degenerate edge case. With a simple modification to the coefficient matrices, we fix this weakly hyperbolicity and propose a new global hyperbolic model.}
Furthermore, we extend the model to include a more general class of closure relations than the original model and establish that this set of general closure relations  {retains} both rotational invariance and hyperbolicity.
\end{abstract}

\begin{keywords}
Shallow water equations; incompressible Navier-Stokes equations; moment approximation; rotational invariance; hyperbolicity
\end{keywords}

\begin{AMS}
\end{AMS}

\section{Introduction}

This paper deals with the two-dimensional extension of a recently introduced set of shallow water models based on a regularized moment expansion of the incompressible Navier-Stokes (NS) equations \cite{kowalski2017moment,koellermeier2020analysis}. Moment models have a long history of constructing computationally efficient representations of complex dynamics arising in a higher dimensional model.  Historically, this approach has been widely used in the reduction of kinetic equations to hierarchies of moment models, where the resulting evolution equation for the $m^{th}$ moment equation depends on the $(m+1)^{st}$ moment \cite{grad1949kinetic}.  At some point, one needs to restrict the expansion to $m$ moments and introduce a model for the $(m+1)^{st}$ moment in terms of lower moments to obtain a solvable system of equations.  
This leads to the well-known moment closure problem, which is the study of what kind of model preserves the desired  hyperbolic structure \cite{levermore1996moment,cai2013globally,cai2014globally,fox2021hyperbolic,fan2016model}.

In this work, we are looking at a new class of models for describing systems traditionally modeled with shallow water equations \cite{vreugdenhil1994numerical} and multi-layer shallow water equations \cite{bouchut2010robust,stewart2010multilayer,fernandez2014multilayer,fernandez2016multilayer}. In \cite{kowalski2017moment}, the new class of models, called the shallow water moment equations (SWME), was derived by taking moments with respect to the Legendre polynomials of the vertical direction of the three-dimensional (3D) incompressible NS equations.  {The zeroth order moment of the momentum equation together with the incompressibility condition yield the traditional shallow water equations.} 
In principle, higher-order moments offer an approach to include vertical information without introducing a mesh in the vertical direction, as one would need for the 3D NS equations. Fundamentally, this is seeking to address a multi-scale problem by providing a path to increased fidelity of the flow dynamics without needing to introduce a mesh to resolve the vertical direction. The system as originally proposed is mathematically elegant, but does not preserve hyperbolicity for higher numbers of moments in 1D and 2D.  The developers of the model realized this and introduced a class of regularizations in 1D called the hyperbolic shallow water moment equations (HSWME) and proceeded to show that this regularized system was provably hyperbolic \cite{koellermeier2020analysis,huang2022equilibrium}. In addition, they have extended the 1D system to a variety of settings and compared depth averaged results of the incompressible 2D Navier Stokes to the regularized moment based models.  In their work, they demonstrated increased fidelity when adding additional moments \cite{kowalski2017moment,koellermeier2020analysis}. 
More recently, there are some follow-up works, including the equilibrium stability analysis \cite{huang2022equilibrium}, well-balanced schemes \cite{koellermeier2022steady}, efficient time discretizations \cite{amrita2022projective}, {axisymmetric model \cite{verbiest2023hyperbolic,verbiest2022}}, multilayer-moment models \cite{garres2023general}, and extension to sediment transport \cite{garres2021shallow}.
 {In a very recent work \cite{scholz2023dispersion}, the non-hydrostatic pressure and the velocity in the vertical direction were taken into consideration based on a splitting of the pressure and a polynomial expansion of the velocity and pressure. It has been demonstrated that the new reduced model retains the correct dispersive effects of the original system.}
{We also mention that in \cite{verbiest2022} the explicit form and the characteristic polynomial of the coefficient matrices of the HSWME in 2D are analyzed. However, the proof in \cite{verbiest2022} only shows that the eigenvalues of the two coefficient matrices in $x$ and $y$ directions are real but not necessarily distinct. Moreover, it is not enough to conclude that the HSWME in 2D is hyperbolic since the linear combination of the two matrices is not necessarily real diagonalizable.}

In this work, our main result is establishing that the 2D extension to the regularized moment expansion of the incompressible NS equations is both rotational invariant and hyperbolic for an arbitrary number of moments. We present two different proofs of the rotational invariance. The first proof involves the split of the coefficient matrix into the conservative and non-conservative parts and proves the rotational invariance for each part, see section \ref{subsec:rotational-invariance-SWME-HSWME}. The second proof relies on the special block structure of the coefficient matrix, see section \ref{subsec:alternative-proof-rotation-invariance}.
With the aid of rotational invariance, the hyperbolicity in 2D reduces to the real diagonalizability of the matrix in 1D. To analyze the eigenvalues, we make use of the associated polynomial sequence and derive the characteristic polynomial of the coefficient matrix analytically. We find that the eigenvalues are related to the Gauss-Lobatto quadrature points (i.e. the zeros of derivative of Legendre polynomials) and Gauss-Legendre quadrature points (i.e. the zeros of Legendre polynomials). In particular, we show that the eigenvalues of the moment system are real and distinct for arbitrary number of moments, see section \ref{subsubsec:alpha1-ne0}. 
Moreover, we find that the moment model is weakly hyperbolic in a degenerate edge case, see section \ref{subsubsec:alpha1=0}. With a simple modification to the coefficient matrices, we fix this weakly hyperbolicity and propose a new global hyperbolic model, see section \ref{subsec:globally-hyperbolic-HSWME}.
More importantly, we establish the general closure relations such that the rotational invariance and hyperbolicity are guaranteed, see sections \ref{subsec:framework-general-closure} and \ref{subsec:example-general-closure}. This opens the door to the development of data-driven closures that preserve hyperbolicity, as in our past work for the radiation transfer equations \cite{huang2021gradient,huang2023machine2,huang2023machine3}.  Data-driven closures are the subject of our future work. In addition, we conduct numerical simulations to compare the performance of different models, see section \ref{sec:numerical}.

The remaining parts of this paper are structured as follows: Section \ref{sec:model} introduces the models. In Section \ref{sec:rotation-invariance}, we show two proofs to the rotational invariance of the models. Section \ref{sec:hyperbolic} analyzes the eigen structure of the models and establishes hyperbolicity of the models. In Section \ref{sec:numerical}, we show the numerical results for different models. In Section \ref{sec:conclusion}, we give conclusions and talk about future directions.    

\setcounter{equation}{0}
\setcounter{figure}{0}
\setcounter{table}{0}

\section{Shallow water moment equations}\label{sec:model}

In this section, we review the main ideas and the results for the derivation of the shallow water moment model in \cite{kowalski2017moment}. We also show the 1D  hyperbolic shallow water moment model proposed in \cite{koellermeier2020analysis}.

We start by considering  the 3D incompressible Navier-Stokes equations:
\begin{equation}\label{eq:in-NS}
\begin{aligned}
\nabla\cdot U &= 0, \\
\partial_t U + \nabla\cdot(UU) &= -\frac{1}{\rho} \nabla p + \frac{1}{\rho} \nabla\cdot{\sigma} + {g}. 
\end{aligned}
\end{equation}
Here, $U = (u,v,w)^T$ is the velocity vector, $p$ is the pressure and $\sigma$ is the stress tensor. The density $\rho$ is constant and $g=(e_x,e_y,e_z)g$ with $(e_x,e_y,e_z)$ a constant unit vector denotes the gravitational acceleration. The often used shallow water coordinate system is recovered by choosing $e_x=e_y=0$ and $g=(0,0,-1)^Tg$.

Under the shallowness assumption, i.e., the horizontal scales of the flow are much larger than the vertical scale, the Navier-Stokes equations \eqref{eq:in-NS} can be reduced by an asymptotic analysis to \cite{kowalski2017moment}:
\begin{equation}\label{eq:in-NS-reduce}
\begin{aligned}
\partial_x u + \partial_y v + \partial_z w &= 0, \\
\partial_t u + \partial_x(u^2) + \partial_y(uv) + \partial_z(uw) &= -\frac{1}{\rho}\partial_x p + \frac{1}{\rho}\partial_z \sigma_{xz} + g e_x, \\
\partial_t v + \partial_x(uv) + \partial_y(v^2) + \partial_z(vw) &= -\frac{1}{\rho}\partial_y p + \frac{1}{\rho}\partial_z \sigma_{yz} + g e_y,
\end{aligned}    
\end{equation}
where the hydrostatic pressure is given by
\begin{equation*}
p(x,y,z,t) = (h_s(x,y,t) - z) \rho g e_z,
\end{equation*}
with $h_s(x,y,t)$ being the profile of the upper free surface.

To derive the shallow water moment model from \eqref{eq:in-NS-reduce}, the first idea in \cite{kowalski2017moment} is to introduce a scaled vertical variable $\zeta(x,y,t)$ given by
\begin{equation*}
\zeta(x,y,t) := \frac{z - h_b(x,y,t)}{h_s(x,y,t) - h_b(x,y,t)} = \frac{z - h_b(x,y,t)}{h(x,y,t)},
\end{equation*}
with $h(x,y,t) := h_s(x,y,t) - h_b(x,y,t)$ is the water height from the bottom $h_b(x,y,t)$ to the surface $h_s(x,y,t)$. This transforms the $z$-direction from a physical space $z\in[h_b, h_s]$ to a projected space $\zeta\in[0,1]$. For any function $\psi=\psi(x,y,z,t)$, the corresponding mapped function $\tilde{\psi}=\tilde{\psi}(x,y,\zeta,t)$ is given by
\begin{equation*}
\tilde{\psi}(x,y,\zeta,t) := \psi(x,y,h(x,y,t)\zeta + h_b(x,y,t)).
\end{equation*}

The complete vertically resolved shallow flow system has the form \cite{kowalski2017moment}
\begin{equation*}
\begin{aligned}
\partial_t h + \partial_x (hu_m) + \partial_y (hv_m) &= 0, \\
\partial_t (h\tilde{u}) + \partial_x (h\tilde{u}^2 + \frac{g}{2}e_zh^2) + \partial_y (h\tilde{u}\tilde{v}) + \partial_{\zeta}(h\tilde{u}\omega - \frac{1}{\rho}\tilde{\sigma}_{xz}) &= gh(e_x - e_z\partial_x h_b), \\
\partial_t (h\tilde{v}) + \partial_x (h\tilde{u}\tilde{v}) + \partial_y (h\tilde{v}^2 + \frac{g}{2}e_zh^2) + \partial_{\zeta}(h\tilde{v}\omega - \frac{1}{\rho}\tilde{\sigma}_{yz}) &= gh(e_y - e_z\partial_y h_b),
\end{aligned}    
\end{equation*}
where $u_m(x,y,t) = \int_0^1 u(x,y,\zeta,t)d\zeta$ and $v_m(x,y,t) = \int_0^1 v(x,y,\zeta,t)d\zeta$ denote the mean velocities and $\omega$ is the vertical coupling 
\begin{equation*}
\omega = \frac{1}{h} \overline{\partial_x(h\tilde{u}) + \partial_y(h\tilde{v})},
\end{equation*}
with the average for any function $\psi=\psi(\zeta)$ defined by
\begin{equation*}
\bar{\psi}(\zeta) := \int_0^{\zeta} \brac{\int_0^1 \psi(\check{\zeta}) d\check{\zeta} - \psi(\hat{\zeta}) } d\hat{\zeta}.
\end{equation*}
Note that, for a constant flow profile in $\zeta$, the vertical coupling coefficient $\omega$ vanishes. In that case, if in addition shear stresses are negligible $\sigma_{xz}=\sigma_{yz}=0$, the system reduces to the shallow water equations.

Before deriving the moment equation, we introduce some assumptions. First, we use the Newtonian constitutive law:
\begin{equation*}
    \sigma_{xz} = \mu\partial_z u, \quad \sigma_{yz} = \mu\partial_z v,
\end{equation*}
where $\mu$ stands for the dynamic viscosity and $\nu=\mu/\rho$ the kinematic viscosity. In order to solve it, we need to specify dynamic boundary conditions in the form of a velocity boundary condition both at the free-surface, and at the bottom topography. At the free-surface, the stress-free conditions are assumed:
\begin{equation*}
    \partial_z u = \partial_z v = 0, \quad \text{at} \quad z=h_s(x,y,t).
\end{equation*}
At the basal surface, the slip boundary conditions are assumed:
\begin{equation*}
    u - \frac{\lambda}{\mu}\sigma_{xz} = v - \frac{\lambda}{\mu}\sigma_{yz} = 0, \quad \text{at} \quad z=h_b(x,y,t).
\end{equation*}
Here, $\lambda$ stands for the slip length.

By assuming a polynomial expansion of the velocity components:
\begin{equation*}
    u(x,y, {\zeta},t) = u_m(x,y,t) + \sum_{j=1}^N \alpha_j(x,y,t)\phi_j( {\zeta}),
\end{equation*}
\begin{equation*}
    v(x,y, {\zeta},t) = v_m(x,y,t) + \sum_{j=1}^N \beta_j(x,y,t)\phi_j( {\zeta}),
\end{equation*}
with the scaled Legendre polynomials $\phi_j$, orthogonal on the interval $[0,1]$ and normalized by $\phi_j(0)=1$, the shallow water moment equations (SWME) can be derived \cite{kowalski2017moment}:
\begin{equation}\label{eq:SWME-component}
\begin{aligned}
& \partial_t h + \partial_x (h u_m) + \partial_y (h v_m) = 0, \\
& \partial_t (h u_m) + \partial_x \brac{h (u_m^2 + \sum_{j=1}^N \frac{\alpha_j^2}{2j+1}) + \frac{g}{2}e_zh^2} + \partial_y \brac{h (u_m v_m + \sum_{j=1}^N \frac{\alpha_j\beta_j}{2j+1})} \\ &\qquad\qquad\qquad\qquad\qquad\qquad\qquad\qquad\qquad\qquad\qquad = -\frac{\nu}{\lambda}(u_m + \sum_{j=1}^N\alpha_j) + hg(e_x - e_z\partial_x h_b), \\
& \partial_t (h v_m) + \partial_x \brac{h (u_m v_m + \sum_{j=1}^N \frac{\alpha_j\beta_j}{2j+1}) } + \partial_y \brac{h (v_m^2 + \sum_{j=1}^N \frac{\beta_j^2}{2j+1}) + \frac{g}{2}e_zh^2 } \\ &\qquad\qquad\qquad\qquad\qquad\qquad\qquad\qquad\qquad\qquad\qquad = -\frac{\nu}{\lambda}(v_m + \sum_{j=1}^N\beta_j) + hg(e_y - e_z\partial_y h_b), \\
& \partial_t (h\alpha_i) + \partial_x \brac{h (2u_m \alpha_i + \sum_{j,k=1}^N A_{ijk} \alpha_j\alpha_k)} + \partial_y \brac{h (u_m \beta_i + v_m \alpha_i + \sum_{j,k=1}^N A_{ijk} \alpha_j\beta_k)} \\
&\qquad\qquad = u_m D_i - \sum_{j,k=1}^N B_{ijk} D_j \alpha_k - (2i+1)\frac{\nu}{\lambda} \brac{u_m + \sum_{j=1}^N(1+\frac{\lambda}{h}C_{ij})\alpha_j}, \quad i=1,2,\cdots,N, \\
& \partial_t (h\beta_i) + \partial_x \brac{h (u_m\beta_i +v_m\alpha_i + \sum_{j,k=1}^N A_{ijk} \alpha_j\beta_k)} + \partial_y \brac{h (2v_m \beta_i + \sum_{j,k=1}^N A_{ijk} \beta_j\beta_k)} \\
&\qquad\qquad = v_m D_i - \sum_{j,k=1}^N B_{ijk} D_j \beta_k - (2i+1)\frac{\nu}{\lambda} \brac{v_m + \sum_{j=1}^N(1+\frac{\lambda}{h}C_{ij})\beta_j}, \quad i=1,2,\cdots,N.
\end{aligned}    
\end{equation}
Here the right-hand-side (RHS) contains non-conservative terms involving the expression
\begin{equation*}
D_i := \partial_x(h \alpha_i) + \partial_y(h \beta_i)
\end{equation*}
and the constants $A_{ijk}$, $B_{ijk}$, $C_{ij}$ are related to the integrals of the Legendre polynomials:
\begin{equation*}
\begin{aligned}
A_{ijk} &= (2i+1)\int_0^1 \phi_i \phi_j \phi_k d\zeta, \quad i,j,k=1,\cdots,N, \\
B_{ijk} &= (2i+1) \int_0^1 \phi_i'\brac{\int_0^{\zeta} \phi_j d\hat{\zeta}} \phi_k d\zeta, \quad i,j,k=1,\cdots,N, \\
C_{ij} &= \int_0^1 \phi_i' \phi_j'd\zeta, \quad i,j=1,\cdots,N.
\end{aligned}
\end{equation*}

The above system \eqref{eq:SWME-component} can be written as
\begin{equation}\label{eq:SWME}
\partial_t U + A(U) \partial_x U + B(U) \partial_y U = S(U),
\end{equation}
with the unknown variables
\begin{equation*}
U = (h, \,  {hu_m}, \,  {hv_m}, \, h\alpha_1, \, h\beta_1, \, h\alpha_2, \, h\beta_2, \, \cdots, h\alpha_N, \, h\beta_N)^T.
\end{equation*}
The coefficient matrices in \eqref{eq:SWME} can be split into the conservative and non-conservative parts:
\begin{equation}\label{eq:SWME-matrix-split-conser-nonconser}
A(U) = \partial_U F(U) + P(U), \quad B(U) = \partial_U G(U) + Q(U).
\end{equation}
Here the physical fluxes $F(U)$ and $G(U)$ for the conservative parts are
\begin{equation}\label{eq:SWME-physical-flux-x-dimension}
\begin{aligned}
F(U)=
(h  {u_m}, \,
& h( {u_m^{2}}+\sum_{j=1}^N\frac{\alpha_j^2}{2j+1})+\frac{1}{2} g h^{2}, \,  
h( {u_m}  {v_m}+\sum_{j=1}^N\frac{\alpha_j\beta_j}{2j+1}), \\
& h(2 {u_m}\alpha_1 + \sum_{j,k=1}^N A_{1jk}\alpha_j\alpha_k), \,
h( {u_m}\beta_1 +  {v_m}\alpha_1 + \sum_{j,k=1}^N A_{1jk}\alpha_j\beta_k), \\
& \cdots, \\
& h(2 {u_m}\alpha_n + \sum_{j,k=1}^N A_{njk}\alpha_j\alpha_k),
h( {u_m}\beta_N +  {v_m}\alpha_n + \sum_{j,k=1}^N A_{njk}\alpha_j\beta_k))^T,
\end{aligned}
\end{equation}
\begin{equation}\label{eq:SWME-physical-flux-y-dimension}
\begin{aligned}
G(U) =
(h {v_m}, \,
& h( {u_m} {v_m}+\sum_{j=1}^N\frac{\alpha_j\beta_j}{2j+1}), \, h( {v_m}^2+\sum_{j=1}^N\frac{\beta_j^2}{2j+1})+\half gh^2, \\
& h( {u_m}\beta_1 +  {v_m}\alpha_1 + \sum_{j,k=1}^N A_{1jk}\alpha_j\beta_k), \,
h(2 {v_m}\beta_1 + \sum_{j,k=1}^N A_{1jk}\beta_j\beta_k), \\
& \cdots, \\
& h( {u_m}\beta_n +  {v_m}\alpha_n + \sum_{j,k=1}^N A_{njk}\alpha_j\beta_k), \,
h(2 {v_m}\beta_n + \sum_{j,k=1}^N A_{njk}\beta_j\beta_k))^T.
\end{aligned}
\end{equation}
The matrices for the non-conservative part can be further decomposed into two parts:
\begin{equation}\label{eq:matrix-non-conservative}
P(U) = P_1(U) + P_2(U), \quad Q(U) = Q_1(U) + Q_2(U).
\end{equation}
Here $P_1(U)$ and $Q_1(U)$ describe the terms $u_m D_i$ and $v_m D_i$ on the RHS of \eqref{eq:SWME-component}:
\begin{equation}\label{eq:matrix-non-conservative-part1-x}
P_1(U) = \diag(0_{3\times3},p(U),\cdots,p(U))\in\mathbb{R}^{(2N+3)\times(2N+3)}
\end{equation}
and
\begin{equation}\label{eq:matrix-non-conservative-part1-y}
Q_1(U) = \diag(0_{3\times3},q(U),\cdots,q(U))\in\mathbb{R}^{(2N+3)\times(2N+3)}
\end{equation}
with
\begin{equation}\label{eq:matrix-small-p-q}
p(U) = 
\begin{pmatrix}
- {u_m} & 0 \\
- {v_m} & 0  
\end{pmatrix} 
\in\mathbb{R}^{2\times2},\quad
q(U) = 
\begin{pmatrix}
0 & - {u_m} \\
0 & - {v_m}
\end{pmatrix}
\in\mathbb{R}^{2\times2}.
\end{equation}
The second part $P_2(U)$ and $Q_2(U)$ describe the terms $\sum_{j,k=1}^N B_{ijk}D_j\alpha_k$ and $\sum_{j,k=1}^N B_{ijk}D_j\beta_k$ on the RHS of \eqref{eq:SWME-component}:
\begin{equation}\label{eq:matrix-non-conservative-part2-x}
P_2(U) = \diag(0_{3\times3}, G(U))\in\mathbb{R}^{(2N+3)\times(2N+3)}
\end{equation}
and
\begin{equation}\label{eq:matrix-non-conservative-part2-y}
Q_2(U) = \diag(0_{3\times3}, H(U))\in\mathbb{R}^{(2N+3)\times(2N+3)}
\end{equation}
with
\begin{equation}
G(U) = (g_{ij})_{1\le i,j\le N} \in\mathbb{R}^{2N \times 2N},\quad H(U) = (h_{ij})_{1\le i,j\le N} \in\mathbb{R}^{2N \times 2N}
\end{equation}
and
\begin{equation}\label{eq:matrix-non-conservative-sub-block}
g_{ij}(U) = 
\begin{pmatrix}
\sum_{k}B_{ijk}\alpha_k & 0 \\
\sum_{k}B_{ijk}\beta_k & 0
\end{pmatrix}
\in\mathbb{R}^{2\times2}
,\quad
h_{ij}(U) = 
\begin{pmatrix}
0 & \sum_{k}B_{ijk}\alpha_k \\
0 & \sum_{k}B_{ijk}\beta_k
\end{pmatrix}
\in\mathbb{R}^{2\times2}.
\end{equation}
 {The source term $S(U)$ in \eqref{eq:SWME} reads as
\begin{equation*}
\begin{aligned}    
    S(U) = (0, \, & -\frac{\nu}{\lambda}(u_m + \sum_{j=1}^N\alpha_j) + hg(e_x - e_z\partial_xh_b),  \, -\frac{\nu}{\lambda}(v_m + \sum_{j=1}^N\beta_j) + hg(e_y - 
    e_z\partial_yh_b), \\
    & - \frac{3\nu}{\lambda} (u_m + \sum_{j=1}^N(1+\frac{\lambda}{h}C_{1j})\alpha_j),  \, - \frac{3\nu}{\lambda} (v_m + \sum_{j=1}^N(1+\frac{\lambda}{h}C_{1j})\beta_j), \\
    & \dots, \\
    & - (2N+1)\frac{\nu}{\lambda} (u_m + \sum_{j=1}^N(1+\frac{\lambda}{h}C_{Nj})\alpha_j),  \, -(2N+1)\frac{\nu}{\lambda} (v_m + \sum_{j=1}^N(1+\frac{\lambda}{h}C_{Nj})\beta_j))^T.
\end{aligned}
\end{equation*}
}
This system \eqref{eq:SWME} is called the shallow water moment equations (SWME). {We also remark that other models are available in \cite{kowalski2017moment}.}

In the one-dimensional case, the SWME \eqref{eq:SWME} with $N\ge 2$ is not globally hyperbolic. In \cite{koellermeier2020analysis}, the author proposed to linearize the system matrix around linear deviations from equilibrium/constant velocity.
\begin{equation}\label{eq:HSWME}
\partial_t U + A_H(U) \partial_x U + B_H(U) \partial_y U = S(U),
\end{equation}
with
\begin{equation*}
    A_H(U) := A(h, \, hu_m,  \, hv_m,  \, h\alpha_1,  \, h\beta_1,  \, 0,  \, 0,  \, \cdots,  \, 0,  \, 0),    
\end{equation*}
and
\begin{equation*}
    B_H(U) := B(h,  \, hu_m,  \, hv_m,  \, h\alpha_1,  \, h\beta_1,  \, 0,  \, 0,  \, \cdots,  \, 0,  \, 0).
\end{equation*}
Keeping $\alpha_1$ and $\beta_1$ allows to capture a large part of the structure despite its simplicity. For example, there will still be a coupling between the different higher order equations. This system is called the hyperbolic shallow water moment equations (HSWME). In 1D, it is proved to be hyperbolic in \cite{koellermeier2020analysis,huang2022equilibrium}. However, it remains unclear whether the HSWME \eqref{eq:HSWME} is hyperbolic in 2D.

\section{Analysis of rotational invariance}\label{sec:rotation-invariance}

In this section, we present two approaches to prove the rotational invariance of the SWME \eqref{eq:SWME} and the HSWME \eqref{eq:HSWME}. The first approach is to first show that the SWME \eqref{eq:SWME} is indeed rotational invariant by decomposing the convection term into conservative part and non-conservative part. The second approach is to exploit the systems block structure. Motivated by the second approach, we propose the general closure relation such that the rotational invariance is satisfied.

\subsection{Rotational invariance of the SWME and the HSWME}\label{subsec:rotational-invariance-SWME-HSWME}

In this part, we show that the SWME \eqref{eq:SWME} is invariant under the rotation of the coordinate system. We first introduce the definition of rotational invariance, which guarantees that the form of the system remains unchanged under a new rotated coordinate system.
\begin{defn}[rotational invariance]\label{def:rotation-invariance}
Consider the first-order system 
\begin{equation}\label{eq:general-first-order-system}
    \partial_t U + A(U) \partial_x U + B(U) \partial_y U = S(U, X)
\end{equation}
with $U=(h, \, h {u_m}, \, h {v_m}, \, h\alpha_1, \, h\beta_1, \, \cdots, \, h\alpha_N, \, h\beta_N)^T\in\mathbb{R}^{2N+3}$ and $X=(x,y)^T\in\mathbb{R}^{2}$. It is said to satisfy the rotational invariance property if 
\begin{equation}\label{eq:rotation-invariance}
\cos\theta \, A(U) + \sin\theta \, B(U) = T^{-1} A(T U) T,
\end{equation}
and
\begin{equation}\label{eq:rotation-invariance-source}
    T S(U, X) = S(T U, T_2 X)
\end{equation}
for any angle $0\le \theta<2\pi$ and any vector $U$.
Here $T=T(\theta)$ is the rotation matrix given by
\begin{equation}\label{eq:rotation-matrix-T}
    T(\theta) = \diag(1, T_2(\theta), T_2(\theta), \cdots, T_2(\theta))\in\mathbb{R}^{(2N+3)\times(2N+3)}
\end{equation}
with
\begin{equation}\label{eq:rotation-matrix-size-2-2}
T_2(\theta) = 
\begin{pmatrix}
\cos\theta & \sin\theta \\
-\sin\theta & \cos\theta 
\end{pmatrix} 
\in\mathbb{R}^{2\times 2}.
\end{equation}
\end{defn}

\begin{rem}
    We remark that the first diagonal block in $T(\theta)$ is 1, while the remaining diagonal blocks are the rotation matrix $T_2(\theta)$ of size $2\times2$. This is due to the fact that the first component of $U$, i.e., the water height, is a scalar function and the remaining components, i.e., $(hu_m, \, hv_m)$ and $(h\alpha_i, \, h\beta_i)$ for $1\le i\le N$, are vector-valued functions.
\end{rem}

\begin{prop}\label{prop:rotational-invariance-system-unchanged}    
If the first-order system \eqref{eq:general-first-order-system} satisfies the rotational invariance in Definition \ref{def:rotation-invariance}, then the form of the system remains unchanged under a new rotated Cartesian coordinate system.
\end{prop}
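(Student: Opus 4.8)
The plan is to carry out a simultaneous rotation of the independent and the dependent variables and then verify, by direct substitution, that the system reproduces itself. Concretely, introduce rotated spatial coordinates $\tilde X=(\tilde x,\tilde y)^T:=T_2(\theta)X$ (with $t$ left unchanged) and the rotated state $\tilde U(\tilde X,t):=T(\theta)\,U(X,t)$, where $X=T_2(\theta)^{-1}\tilde X$; the goal is to show $\partial_t\tilde U+A(\tilde U)\,\partial_{\tilde x}\tilde U+B(\tilde U)\,\partial_{\tilde y}\tilde U=S(\tilde U,\tilde X)$, i.e.\ the same coefficient matrices $A,B$ and the same source $S$ as in \eqref{eq:general-first-order-system}. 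The only facts about the rotation matrices I would invoke are that $T(\theta)$ and $T_2(\theta)$ are constant in $(t,X)$, orthogonal, and obey the group law $T(\alpha+\beta)=T(\alpha)T(\beta)=T(\beta)T(\alpha)$, which follows blockwise from $T_2(\alpha)T_2(\beta)=T_2(\alpha+\beta)=T_2(\beta)T_2(\alpha)$ and the trivial scalar block.

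First I would record the chain rule coming from $\tilde x=\ct\,x+\st\,y$, $\tilde y=-\st\,x+\ct\,y$, namely $\partial_x=\ct\,\partial_{\tilde x}-\st\,\partial_{\tilde y}$ and $\partial_y=\st\,\partial_{\tilde x}+\ct\,\partial_{\tilde y}$. Substituting $U=T(\theta)^{-1}\tilde U$ into \eqref{eq:general-first-order-system}, using that $T(\theta)^{-1}$ commutes with $\partial_t,\partial_x,\partial_y$, and multiplying on the left by $T(\theta)$, the time term becomes $\partial_t\tilde U$, the right-hand side becomes $T(\theta)\,S(T(\theta)^{-1}\tilde U,X)$, and the convective part becomes
\begin{equation*}
T(\theta)\brac{\ct\,A(T(\theta)^{-1}\tilde U)+\st\,B(T(\theta)^{-1}\tilde U)}T(\theta)^{-1}\partial_{\tilde x}\tilde U+T(\theta)\brac{-\st\,A(T(\theta)^{-1}\tilde U)+\ct\,B(T(\theta)^{-1}\tilde U)}T(\theta)^{-1}\partial_{\tilde y}\tilde U.
\end{equation*}
Applying \eqref{eq:rotation-invariance} with $U$ replaced by $T(\theta)^{-1}\tilde U$ (so that $T(\theta)U=\tilde U$) shows the coefficient of $\partial_{\tilde x}\tilde U$ equals $A(\tilde U)$, and applying \eqref{eq:rotation-invariance-source} with the same substitution shows the right-hand side equals $S(\tilde U,T_2(\theta)X)=S(\tilde U,\tilde X)$.

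The only step that is not routine bookkeeping is identifying the coefficient of $\partial_{\tilde y}\tilde U$ with $B(\tilde U)$, i.e.\ proving the auxiliary identity $-\st\,A(U)+\ct\,B(U)=T(\theta)^{-1}B(T(\theta)U)T(\theta)$ for all $\theta$ and $U$. I would get this from \eqref{eq:rotation-invariance} by replacing $\theta$ with $\theta+\pi/2$: the left-hand side is then exactly $-\st\,A(U)+\ct\,B(U)$, while on the right-hand side I write $T(\theta+\pi/2)=T(\pi/2)T(\theta)$, set $V:=T(\theta)U$, and recognize the inner factor $T(\pi/2)^{-1}A(T(\pi/2)V)T(\pi/2)$ as $B(V)$ by \eqref{eq:rotation-invariance} specialized to $\theta=\pi/2$. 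Collecting the three identifications yields $\partial_t\tilde U+A(\tilde U)\,\partial_{\tilde x}\tilde U+B(\tilde U)\,\partial_{\tilde y}\tilde U=S(\tilde U,\tilde X)$, which is the assertion. I expect the main obstacle to be conceptual rather than computational: one must keep the two roles of the state vector — $U$ versus $\tilde U=T(\theta)U$ — carefully distinct, and notice that because \eqref{eq:rotation-invariance} is postulated for \emph{every} angle, it already contains the information that $(A,B)$ transforms like a vector under rotation, which is exactly what forces the $\partial_{\tilde y}$ coefficient to equal $B(\tilde U)$.
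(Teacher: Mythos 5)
Your proposal is correct and follows the same overall strategy as the paper's Appendix~A proof: change to the rotated frame, apply the chain rule to exchange $(\partial_x,\partial_y)$ for $(\partial_{\tilde x},\partial_{\tilde y})$, and invoke the rotational-invariance identity for $A$ together with a companion identity for $B$ to identify the coefficients of $\partial_{\tilde x}\tilde U$ and $\partial_{\tilde y}\tilde U$ with $A(\tilde U)$ and $B(\tilde U)$, plus \eqref{eq:rotation-invariance-source} for the source. The one place you genuinely diverge from the paper is in how you obtain the companion identity $-\sin\theta\,A(U)+\cos\theta\,B(U)=T^{-1}B(TU)T$ (the paper's Lemma~\ref{lem:rotation-invariance-property-2}): you substitute $\theta\mapsto\theta+\pi/2$ into \eqref{eq:rotation-invariance}, factor $T(\theta+\pi/2)=T(\pi/2)T(\theta)$, and recognize the inner factor as $B(V)$ via \eqref{eq:rotation-invariance} at $\theta=\pi/2$. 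The paper instead substitutes $\theta\mapsto-\theta$, replaces $U$ by $TU$, combines with \eqref{eq:rotation-invariance}, and divides through by $\sin\theta$, which forces a separate ad hoc argument for the degenerate angles $\theta=0,\pi$. Your $\pi/2$-shift argument is cleaner: it uses only the group law for $T$ and handles all $\theta$ uniformly, with no division and no case split. Both are correct; your derivation of the lemma is the tidier of the two.
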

\begin{proof}
    See the proof in Appendix \ref{app:proof-rotation-invariance-system-unchanged}.
\end{proof}

We note that it is easy to show the rotational invariance \eqref{eq:rotation-invariance-source} of the source term in \eqref{eq:SWME}. Therefore, we will focus on the rotational invariance \eqref{eq:rotation-invariance} of the convection terms in this paper.
Since the coefficient matrices in the SWME \eqref{eq:SWME} can be split into the conservative part and non-conservative part in \eqref{eq:SWME-matrix-split-conser-nonconser}, we will prove the rotational invariance property in two  steps. Before the proof, we prepare a set of equalities used in the proof.
\begin{prop}\label{prop:identity}
For $u, v, \alpha, \beta, \theta \in\mathbb{R}$, we introduce the rotated variables in the new coordinate system:
\begin{equation}\label{eq:rotated-variables}
    u_\theta := \ct \, u + \st \, v, \quad v_\theta := -\st \, u + \ct \, v, 
\end{equation}
and
\begin{equation}
    \alpha_{\theta} := \ct \, \alpha + \st \, \beta, \quad \beta_{\theta} := -\st \, \alpha + \ct \, \beta.
\end{equation}
Then the following equalities hold true:
\begin{equation}\label{prop:equality-5}
\ct \, u_\theta - \st \, v_\theta = u,    
\end{equation}
\begin{equation}\label{prop:equality-6}
\st \, u_\theta + \ct \, v_\theta = v,
\end{equation}
\begin{equation}\label{prop:equality-1}
\cos\theta \, (u_\theta)^2 - \sin\theta \, u_\theta v_\theta = u u_\theta,
\end{equation}
\begin{equation}\label{prop:equality-2}
\sin\theta \, (u_\theta)^2 + \cos\theta \, u_\theta v_\theta = v u_\theta,
\end{equation}
\begin{equation}\label{prop:equality-3}
2\cos\theta \, u_\theta \alpha_\theta -\sin\theta \, \brac{u_\theta \beta_\theta + v_\theta \alpha_\theta} 
= 2\cos\theta \, u\alpha + \sin\theta \, (u\beta + v\alpha),
\end{equation}
\begin{equation}\label{prop:equality-4}
2\sin\theta \, u_\theta \alpha_\theta + \cos\theta \, \brac{u_\theta \beta_\theta + v_\theta \alpha_\theta} =
\cos\theta \, (u\beta+v\alpha) + 2\sin\theta \, v\beta.
\end{equation}

\end{prop}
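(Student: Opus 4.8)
The plan is to obtain all six identities as elementary consequences of the orthogonality of the $2\times2$ rotation block $T_2(\theta)$ appearing in \eqref{eq:rotation-matrix-size-2-2}, working from the linear identities to the quadratic ones and finally to the bilinear ones. The key structural observation is that $(u_\theta,v_\theta)^T = T_2(\theta)(u,v)^T$ and $(\alpha_\theta,\beta_\theta)^T = T_2(\theta)(\alpha,\beta)^T$, together with $T_2(\theta)^{-1} = T_2(\theta)^T = T_2(-\theta)$, so that everything below is just repeated use of $\cos^2\theta+\sin^2\theta = 1$.

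First I would record the inverse rotation $(u,v)^T = T_2(\theta)^T(u_\theta,v_\theta)^T$; written out componentwise this is exactly \eqref{prop:equality-5} and \eqref{prop:equality-6} (equivalently, expand the two left-hand sides directly and collapse with $\cos^2\theta+\sin^2\theta=1$). The same relation holds verbatim with $(u,v,u_\theta,v_\theta)$ replaced by $(\alpha,\beta,\alpha_\theta,\beta_\theta)$, and I will use this below. Then \eqref{prop:equality-1} and \eqref{prop:equality-2} follow at once by multiplying \eqref{prop:equality-5} and \eqref{prop:equality-6} through by $u_\theta$: indeed $u\,u_\theta = (\cos\theta\,u_\theta - \sin\theta\,v_\theta)u_\theta$ and $v\,u_\theta = (\sin\theta\,u_\theta + \cos\theta\,v_\theta)u_\theta$.

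For the two bilinear identities \eqref{prop:equality-3} and \eqref{prop:equality-4}, the clean route is to introduce the symmetrized outer product $M(w,a) := wa^T + aw^T$ of the column vectors $w=(u,v)^T$ and $a=(\alpha,\beta)^T$, which is symmetric with entries $M_{11}=2u\alpha$, $M_{12}=M_{21}=u\beta+v\alpha$, $M_{22}=2v\beta$, and which transforms as $M(w_\theta,a_\theta) = T_2(\theta)\,M(w,a)\,T_2(\theta)^T$ because $w_\theta = T_2 w$, $a_\theta = T_2 a$. The left-hand side of \eqref{prop:equality-3} is then $e_1^T M(w_\theta,a_\theta)\,T_2(\theta)\,e_1 = e_1^T T_2(\theta)\,M(w,a)\,e_1$ after cancelling $T_2(\theta)^T T_2(\theta) = I$, while its right-hand side is $e_1^T M(w,a)\,T_2(\theta)^T e_1$; the two agree since $M(w,a)$ is symmetric. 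Identity \eqref{prop:equality-4} is proved the same way, choosing $e_2$ in place of $e_1$ in one of the two slots. Alternatively—and perhaps more in keeping with a short paper—one simply substitutes the inverse relations for $u,v,\alpha,\beta$ into the right-hand sides of \eqref{prop:equality-3}--\eqref{prop:equality-4}, expands, and collects the $\cos^2\theta$, $\sin^2\theta$, and cross terms.

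There is no genuine obstacle here: each identity is a one-line manipulation once the inverse rotation relations \eqref{prop:equality-5}--\eqref{prop:equality-6} are available, and the only thing needing care is the sign bookkeeping in the bilinear identities, which the symmetric-matrix formulation handles automatically.
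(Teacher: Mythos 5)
Your proof is correct, and for the first four identities it is essentially the same as the paper's: \eqref{prop:equality-5}--\eqref{prop:equality-6} are verified by direct substitution and $\cos^2\theta+\sin^2\theta=1$, and the paper's derivation of \eqref{prop:equality-1}--\eqref{prop:equality-2} factors out $u_\theta$ so that the inner bracket is exactly the left side of \eqref{prop:equality-5} (resp.\ \eqref{prop:equality-6}), which is precisely your ``multiply \eqref{prop:equality-5}--\eqref{prop:equality-6} by $u_\theta$'' observation. For the bilinear identities \eqref{prop:equality-3}--\eqref{prop:equality-4}, however, the paper proceeds by fully expanding $u_\theta\alpha_\theta$, $u_\theta\beta_\theta$, $v_\theta\alpha_\theta$ and collecting powers of $\cos\theta$, $\sin\theta$, whereas you introduce the symmetric matrix $M(w,a)=wa^T+aw^T$, use the conjugation law $M(w_\theta,a_\theta)=T_2MT_2^T$, and read off both sides as quadratic forms $e_i^T(\cdot)e_j$, cancelling $T_2^TT_2=I$ and invoking the symmetry of $M$. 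Your argument is genuinely shorter and more structural: it makes the role of orthogonality explicit, eliminates the error-prone sign bookkeeping the paper must carry out by hand, and at the same time exposes \emph{why} the identities hold (they are the $(1,1)$ and $(1,2)$ entries of a conjugated symmetric matrix). The paper's brute-force route has the modest advantage of requiring no notation beyond scalars and of being verifiable line by line with no linear-algebra packaging; your route buys conceptual transparency and is closer in spirit to the block-structure proof of rotational invariance given later in Section~\ref{subsec:alternative-proof-rotation-invariance}. One small point worth stating explicitly in the write-up: in passing from $e_1^T T_2 M e_j$ to $e_j^T M T_2^T e_1$ you transpose a scalar and use $M^T=M$; this is implicit in your ``the two agree since $M$ is symmetric,'' but it deserves a sentence so the reader sees exactly where symmetry is used.
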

\begin{proof}
See the proof in Appendix \ref{appendix:proof-equality}.
\end{proof}

We first prove the rotational invariance for the conservative part:
\begin{lem}[rotational invariance for conservative part]\label{lem:rotation-invariance-conservative}
The conservative part in \eqref{eq:SWME} satisfies the rotational invariance:
\begin{equation}\label{eq:rotation-invariance-conservative}
\cos\theta \, F(U) + \sin\theta \, G(U) = T^{-1} F(TU).
\end{equation}
for any $\theta$ and $U$.
\end{lem}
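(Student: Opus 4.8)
The plan is to verify the identity \eqref{eq:rotation-invariance-conservative} componentwise, exploiting the block structure $T = \diag(1, T_2, \dots, T_2)$ so that the statement decouples into three types of blocks. Writing $F(TU)$ means evaluating the flux at the rotated state whose mean-velocity entries are $(h u_\theta, h v_\theta)$ and whose higher moments are $(h\alpha_{j,\theta}, h\beta_{j,\theta})$, exactly the rotated variables of Proposition~\ref{prop:identity}. Multiplying on the left by $T^{-1} = \diag(1, T_2^{-1}, \dots, T_2^{-1})$ and on the right there is nothing (the claimed identity has no right multiplication by $T$, since $F$ is a flux vector not a matrix), so I need only check that each $2$-vector block of $\cos\theta\, F(U) + \sin\theta\, G(U)$ equals $T_2^{-1}$ times the corresponding block of $F(TU)$, and that the scalar (height) block is preserved.

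First I would handle the height component: the first entry of $F$ is $hu_m$ and of $G$ is $hv_m$, so $\cos\theta\,(hu_m) + \sin\theta\,(hv_m) = h u_{m,\theta}$, which is precisely the first entry of $F(TU)$; the scalar block needs no rotation, so this case is immediate from the definition of $u_{m,\theta}$ in \eqref{eq:rotated-variables}. Next, for the momentum block $(hu_m, hv_m)$, I would apply $T_2$ to the pair $\bigl(F_2, F_3\bigr)$ evaluated at $TU$ — equivalently, show $T_2^{-1}\bigl(F_2(TU), F_3(TU)\bigr)^T = \bigl(\cos\theta\, F_2(U) + \sin\theta\, G_2(U),\ \cos\theta\, F_3(U) + \sin\theta\, G_3(U)\bigr)^T$. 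The entries here are the shallow-water momentum fluxes $h(u_m^2 + \sum \alpha_j^2/(2j+1)) + \tfrac12 g h^2$ and $h(u_m v_m + \sum \alpha_j\beta_j/(2j+1))$, together with their $G$-counterparts. The pressure term $\tfrac12 gh^2$ appears identically in $F_2$ and $G_3$ and transforms like a scalar multiple of the identity under $T_2$, hence is preserved; the quadratic velocity terms are exactly \eqref{prop:equality-1} and \eqref{prop:equality-2}, and the quadratic moment sums $\sum \alpha_j^2/(2j+1)$, $\sum \alpha_j\beta_j/(2j+1)$, $\sum \beta_j^2/(2j+1)$ transform by the same algebra with $(\alpha_j,\beta_j)$ in place of $(u,v)$ — this is just the statement that $\alpha_{j,\theta}^2 + \beta_{j,\theta}^2$-type combinations rotate correctly, which follows from the orthogonality of $T_2$.

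Then I would treat a generic higher-moment block, i.e. the pair of fluxes $h(2u_m\alpha_i + \sum_{j,k} A_{ijk}\alpha_j\alpha_k)$ and $h(u_m\beta_i + v_m\alpha_i + \sum_{j,k} A_{ijk}\alpha_j\beta_k)$ from $F$, paired with $h(u_m\beta_i + v_m\alpha_i + \sum_{j,k} A_{ijk}\alpha_j\beta_k)$ and $h(2v_m\beta_i + \sum_{j,k} A_{ijk}\beta_j\beta_k)$ from $G$. After factoring out $h$, the linear-in-$\alpha_i$ part is handled by the bilinear identities \eqref{prop:equality-3} and \eqref{prop:equality-4}, applied with $(u,v)=(u_m,v_m)$ and $(\alpha,\beta)=(\alpha_i,\beta_i)$. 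The remaining task is the cubic term: I must check that $\sum_{j,k} A_{ijk}$ contracted against the rotated $\alpha,\beta$ combinations reproduces the rotation of $\sum_{j,k} A_{ijk}$ contracted against the unrotated ones. Because the index $i$ is a free ``vector'' index while $j,k$ are summed, this reduces to showing that for fixed $i$ the pair $\bigl(\sum A_{ijk}\alpha_{j,\theta}\alpha_{k,\theta},\ \sum A_{ijk}\alpha_{j,\theta}\beta_{k,\theta}\bigr)$ rotates under $T_2$ the way $\bigl(\alpha_i\alpha, \alpha_i\beta\bigr)$-type terms do — i.e. one needs the symmetry $A_{ijk} = A_{ikj}$ (clear from the integral definition) so that the $\theta$-dependence factors exactly as in Proposition~\ref{prop:identity}. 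I expect this cubic-term bookkeeping to be the main obstacle: one has to be careful that the $T_2$ acting on the $i$-block and the implicit $T_2$'s hidden inside the summed $\alpha_{j,\theta},\beta_{j,\theta}$ are consistent, and that no cross terms survive. Finally, I would note that the HSWME case follows immediately, since $A_H, B_H$ are obtained by evaluating $A,B$ at a state with $\alpha_j=\beta_j=0$ for $j\ge 2$, and that substitution commutes with the rotation $T$ (rotating a state whose higher moments vanish again gives a state whose higher moments vanish), so the conservative-part identity for HSWME is just the restriction of \eqref{eq:rotation-invariance-conservative}.
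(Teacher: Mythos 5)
Your proposal is correct and takes essentially the same route as the paper: compute $TU$ (which gives the rotated state), exploit the block-diagonal structure of $T$ to reduce the claim to a scalar check plus a sequence of $2\times 2$ checks, and invoke the bilinear identities of Proposition~\ref{prop:identity} for the quadratic and bilinear flux terms. You make explicit two details the paper leaves implicit — that the pressure term $\tfrac12 gh^2$ contributes a constant vector $(1,0)^T$ in the momentum block so that $T_2^{-1}(1,0)^T=(\cos\theta,\sin\theta)^T$ closes that part, and that the symmetry $A_{ijk}=A_{ikj}$ (clear from the integral definition) is needed in the final rearrangement $\sum_{j,k}A_{ijk}\beta_j\alpha_k=\sum_{j,k}A_{ijk}\alpha_j\beta_k$ — which is a useful clarification rather than a deviation.
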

\begin{proof}
We first compute $TU$:
\begin{equation*}
\begin{aligned}
TU ={}&
\begin{pmatrix}
1 & & & & & \\
& \cos\theta & \sin\theta & & & \\
& -\sin\theta & \cos\theta & & & \\
& & & \cos\theta & \sin\theta \\
& & & -\sin\theta & \cos\theta \\
& & & & & \ddots \\
& & & & & & \cos\theta & \sin\theta \\
& & & & & & -\sin\theta & \cos\theta
\end{pmatrix}
\begin{pmatrix}
h \\
h {u_m} \\
h {v_m} \\
h\alpha_1 \\
h\beta_1 \\
\vdots \\
h\alpha_N \\
h\beta_N 
\end{pmatrix}
\\ 
={}&
\begin{pmatrix}
h \\
h(\cos\theta  {u_m} + \sin\theta  {v_m}) \\
h(-\sin\theta  {u_m} + \cos\theta  {v_m}) \\
h(\cos\theta \alpha_1 + \sin\theta \beta_1) \\
h(-\sin\theta \alpha_1 + \cos\theta \beta_1) \\
\vdots \\
h(\cos\theta \alpha_N + \sin\theta \beta_N) \\
h(-\sin\theta \alpha_N + \cos\theta \beta_N)
\end{pmatrix}
=
\begin{pmatrix}
h \\
h ( {u_m})_\theta \\
h ( {v_m})_\theta \\
h (\alpha_1)_\theta \\
h (\beta_1)_\theta \\
\vdots \\
h (\alpha_N)_\theta \\
h (\beta_N)_\theta
\end{pmatrix},
\end{aligned}
\end{equation*}
 {where $(u_m)_{\theta}$ and $(v_m)_{\theta}$ are defined in \eqref{eq:rotated-variables}, and $(\alpha_i)_{\theta}$ and $(\beta_i)_{\theta}$ are defined similarly.}

Next, we compute $F(TU)$:
\begin{equation*}
\begin{aligned}
F(TU)={}&
(h ( {u_m})_\theta, \, h(( {u_m})_\theta^{2} + \sum_{j}\frac{(\alpha_1)_\theta^2}{2j+1})+\frac{1}{2} g h^{2},  \,  
h(( {u_m})_\theta ( {v_m})_\theta + \sum_{j}\frac{(\alpha_j)_\theta (\beta_j)_\theta}{2j+1}), \\
& h(2( {u_m})_\theta(\alpha_1)_\theta + \sum_{j,k}A_{1jk}(\alpha_j)_\theta (\alpha_k)_\theta), \, 
h(( {u_m})_\theta(\beta_1)_\theta + ( {v_m})_\theta (\alpha_1)_\theta + \sum_{j,k}A_{1jk}(\alpha_j)_\theta (\beta_k)_\theta), \\
& \cdots, \\
& h(2( {u_m})_\theta(\alpha_n)_\theta + \sum_{j,k} {A_{Njk}}(\alpha_j)_\theta(\alpha_k)_\theta),  \, 
h(( {u_m})_\theta(\beta_n)_\theta + ( {v_m})_\theta(\alpha_n)_\theta + \sum_{j,k} {A_{Njk}}(\alpha_j)_\theta(\beta_k)_\theta))^T,
\end{aligned}
\end{equation*}

Then we compute $T^{-1} F(TU)$ for each component and prove that it is equal to the LHS in \eqref{eq:rotation-invariance-conservative}. Notice that
\begin{equation}
T^{-1} = 
\begin{pmatrix}
1 & & & & & \\
& \cos\theta & -\sin\theta & & & \\
& \sin\theta & \cos\theta & & & \\
& & & \cos\theta & -\sin\theta \\
& & & \sin\theta & \cos\theta \\
& & & & & \ddots \\
& & & & & & \cos\theta & -\sin\theta \\
& & & & & & \sin\theta & \cos\theta
\end{pmatrix}.    
\end{equation}

We start with the first component in \eqref{eq:rotation-invariance-conservative}:
\begin{equation*}
\textrm{RHS} = h( {u_m})_\theta = h(\ct \,  {u_m} + \st  \,  {v_m}) = \ct  \, h  {u_m} + \st  \, h  {v_m} = \textrm{LHS}.
\end{equation*}
Next, we compute the second component in \eqref{eq:rotation-invariance-conservative}:
\begin{equation*}
\begin{aligned}
\textrm{RHS} 
&= \cos\theta \brac{ h(( {u_m})_\theta^{2} + \sum_{j}\frac{(\alpha_1)_\theta^2}{2j+1})+\frac{1}{2} g h^{2} } - \sin\theta 
h(( {u_m})_\theta ( {v_m})_\theta + \sum_{j}\frac{(\alpha_j)_\theta (\beta_j)_\theta}{2j+1})
 \\
&= h \brac{\ct ( {u_m})_\theta^2 - \st ( {u_m})_\theta ( {v_m})_\theta} + \ct\half gh^2 + h \sum_j \frac{1}{2j+1} \brac{\ct (\alpha_j)_\theta^2 - \st (\alpha_j)_\theta (\beta_j)_\theta } \\
& \stackrel{\text{\eqref{prop:equality-1}}}{=} h  {u_m} ( {u_m})_\theta + \ct\half gh^2 + h \sum_j \frac{1}{2j+1} \alpha_j (\alpha_j)_\theta  \\
&= h {u_m}(\cos\theta  {u_m} + \sin\theta  {v_m}) + \cos\theta\half gh^2 + h\sum_j\frac{1}{2j+1}\alpha_j(\cos\theta\alpha_j+\sin\theta\beta_j) \\
&= \cos\theta\brac{h( {u_m}^2+\sum_j\frac{\alpha_j^2}{2j+1})+\half gh^2} + \sin\theta\brac{h( {u_m} {v_m}+\sum_j\frac{\alpha_j\beta_j}{2j+1})} \\
&= \textrm{LHS}.
\end{aligned}
\end{equation*}
The third component in the RHS of \eqref{eq:rotation-invariance-conservative} is:
\begin{equation*}
\begin{aligned}
\textrm{RHS} 
&= \sin\theta \brac{ h(( {u_m})_\theta^{2} + \sum_{j}\frac{(\alpha_j)_\theta^2}{2j+1})+\frac{1}{2} g h^{2} } + \cos\theta h(( {u_m})_\theta ( {v_m})_\theta + \sum_{j}\frac{(\alpha_j)_\theta (\beta_j)_\theta}{2j+1}) \\
&= h \brac{\st ( {u_m})_\theta^2 + \ct ( {u_m})_\theta ( {v_m})_\theta} + \st\half gh^2 + h \sum_j \frac{1}{2j+1} \brac{\st (\alpha_j)_\theta^2 + \ct (\alpha_j)_\theta (\beta_j)_\theta } \\
& \stackrel{\text{\eqref{prop:equality-2}}}{=} h ( {v_m}) ( {u_m})_\theta + \st\half gh^2 + h \sum_j \frac{1}{2j+1} \beta_j (\alpha_j)_\theta  \\
&= h {v_m}(\cos\theta  {u_m} + \sin\theta ( {v_m})) + h\sum_j\frac{1}{2j+1}\beta_j(\cos\theta\alpha_j+\sin\theta\beta_j) + \sin\theta\half gh^2\\
&= \cos\theta\brac{h( {u_m} {v_m}+\sum_j\frac{\alpha_j\beta_j}{2j+1})} + \sin\theta\brac{h(( {v_m})^2+\sum_j\frac{\beta_j^2}{2j+1})+\half gh^2} \\
&= \textrm{LHS}.
\end{aligned}
\end{equation*}
The fourth component in the RHS of \eqref{eq:rotation-invariance-conservative} is
\begin{equation*}
\begin{aligned}
\textrm{RHS} 
&= \ct h \brac{2 ( {u_m})_\theta(\alpha_1)_\theta + \sum_{j,k}A_{1jk}(\alpha_j)_\theta (\alpha_k)_\theta} - \st h \brac{( {u_m})_\theta(\beta_1)_\theta + ( {v_m})_\theta (\alpha_1)_\theta + \sum_{j,k}A_{1jk}(\alpha_j)_\theta (\beta_k)_\theta} \\
&= h \brac{2\ct ( {u_m})_{\theta}(\alpha_1)_\theta - \st(( {u_m})_{\theta}(\beta_1)_\theta + ( {v_m})_\theta(\alpha_1)_{\theta})} + h \sum_{j,k}A_{1jk} (\alpha_j)_{\theta} \brac{\ct (\alpha_k)_{\theta} - \st (\beta_k)_{\theta}} \\
& \stackrel{\text{\eqref{prop:equality-3},\eqref{prop:equality-5}}}{=} h\brac{2\ct  {u_m}\alpha_1 + \st( {u_m}\beta_1 +  {v_m}\alpha_1)} + h \sum_{j,k}A_{1jk} (\alpha_j)_{\theta} \alpha_k \\
&= h\brac{2\ct  {u_m}\alpha_1 + \st( {u_m}\beta_1 +  {v_m}\alpha_1)} + h \sum_{j,k}A_{1jk} (\ct\alpha_j + \st\beta_j)\alpha_k \\
&= \cos\theta h(2 {u_m}\alpha_1 + \sum_{j,k}A_{1jk}\alpha_j\alpha_k) + \sin\theta h (( {u_m}\beta_1+ {v_m}\alpha_1) + \sum_{j,k}A_{1jk}\alpha_j\beta_k) \\
&=\textrm{LHS}.
\end{aligned}    
\end{equation*}
Then we compute the fifth component:
\begin{equation*}
\begin{aligned}
\textrm{RHS} 
&= \st h \brac{2 ( {u_m})_\theta(\alpha_1)_\theta + \sum_{j,k}A_{1jk}(\alpha_j)_\theta (\alpha_k)_\theta} + \ct h \brac{( {u_m})_\theta(\beta_1)_\theta + ( {v_m})_\theta (\alpha_1)_\theta + \sum_{j,k}A_{1jk}(\alpha_j)_\theta (\beta_k)_\theta} \\
&= h \brac{2\st ( {u_m})_{\theta}(\alpha_1)_\theta + \ct(( {u_m})_{\theta}(\beta_1)_\theta + ( {v_m})_\theta(\alpha_1)_{\theta})} + h \sum_{j,k}A_{1jk} (\alpha_j)_{\theta} \brac{\st (\alpha_k)_{\theta} + \ct (\beta_k)_{\theta}} \\
& \stackrel{\text{\eqref{prop:equality-4},\eqref{prop:equality-6}}}{=} h\brac{\ct( {u_m}\beta_1 +  {v_m}\alpha_1) + 2\st  {v_m}\beta_1 } + h \sum_{j,k}A_{1jk} (\alpha_j)_{\theta} \beta_k \\
&= h(\cos\theta( {u_m}\beta_1+ {v_m}\alpha_1) + 2\st  {v_m}\beta_1) + h\sum_{j,k}A_{1jk}(\cos\theta \alpha_j+\sin\theta \beta_j)\beta_k \\
&= \cos\theta h(( {u_m}\beta_1+ {v_m}\alpha_1) + \sum_{j,k}A_{1jk}\alpha_j\beta_k) + \sin\theta h (2 {v_m}\beta_1 + \sum_{j,k}A_{1jk}\beta_j\beta_k) \\
&= \textrm{LHS}.
\end{aligned} 
\end{equation*}

For the remaining components, the proof is similar to the fourth and fifth ones and the details are omitted here.
\end{proof}

Next, we prove the rotational invariance for the non-conservative part:
\begin{lem}[rotational invariance for non-conservative part]\label{lem:rotation-invariance-non-conservative}
The non-conservative part in \eqref{eq:SWME} satisfies the rotational invariance:
\begin{equation}\label{eq:rotation-invariance-non-conservative}
\cos\theta \, P(U) + \sin\theta \, Q(U) = T^{-1} P(T U) T,
\end{equation}
for any $\theta$ and $U$.
\end{lem}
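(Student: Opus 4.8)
The plan is to reduce \eqref{eq:rotation-invariance-non-conservative} to a single $2\times2$ identity by combining the decomposition $P=P_1+P_2$, $Q=Q_1+Q_2$ of \eqref{eq:matrix-non-conservative} with the block structure of $T(\theta)$. Since both sides of \eqref{eq:rotation-invariance-non-conservative} are linear in the constituent matrices, it suffices to prove $\ct\,P_1(U)+\st\,Q_1(U)=T^{-1}P_1(TU)T$ and $\ct\,P_2(U)+\st\,Q_2(U)=T^{-1}P_2(TU)T$ separately. The structural observation is that $T(\theta)$ in \eqref{eq:rotation-matrix-T} is block diagonal with leading $3\times3$ block $\diag(1,T_2(\theta))$ followed by $N$ identical copies of $T_2(\theta)$, which matches the pattern $\diag(0_{3\times3},p(U),\dots,p(U))$ of $P_1(U)$ and the pattern $\diag(0_{3\times3},G(U))$ of $P_2(U)$. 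Writing $\widetilde T(\theta)=\diag(T_2(\theta),\dots,T_2(\theta))\in\mathbb{R}^{2N\times2N}$ for the lower-right corner of $T(\theta)$, one gets $T^{-1}P_2(TU)T=\diag\brac{0_{3\times3},\,\widetilde T^{-1}G(TU)\,\widetilde T}$, and because every diagonal block of $\widetilde T$ is the \emph{same} matrix $T_2$, the $(i,j)$ sub-block of $\widetilde T^{-1}G(TU)\,\widetilde T$ is $T_2^{-1}g_{ij}(TU)T_2$. Thus the lemma reduces to verifying the $2\times2$ identities
\begin{equation*}
\ct\,p(U)+\st\,q(U)=T_2^{-1}p(TU)T_2 \qquad\text{and}\qquad \ct\,g_{ij}(U)+\st\,h_{ij}(U)=T_2^{-1}g_{ij}(TU)T_2
\end{equation*}
for all $1\le i,j\le N$, with $p,q$ from \eqref{eq:matrix-small-p-q} and $g_{ij},h_{ij}$ from \eqref{eq:matrix-non-conservative-sub-block}.

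For the first identity I would use the computation of $TU$ already carried out in the proof of Lemma \ref{lem:rotation-invariance-conservative}, which shows that $p(TU)$ is the $2\times2$ matrix with first column $\brac{-( {u_m})_\theta,-( {v_m})_\theta}^T$ and vanishing second column. Multiplying out $T_2^{-1}p(TU)T_2$ and comparing entrywise with $\ct\,p(U)+\st\,q(U)$, whose columns are $\brac{-\ct\, {u_m},-\ct\, {v_m}}^T$ and $\brac{-\st\, {u_m},-\st\, {v_m}}^T$, one finds that all four entries match precisely because of the scalar relations $\ct( {u_m})_\theta-\st( {v_m})_\theta= {u_m}$ and $\st( {u_m})_\theta+\ct( {v_m})_\theta= {v_m}$, i.e.\ exactly the equalities \eqref{prop:equality-5}--\eqref{prop:equality-6} of Proposition \ref{prop:identity}.

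The second identity is the same identity in disguise, and this is the point to emphasize. Because $( {\alpha_k})_\theta=\ct\,\alpha_k+\st\,\beta_k$ and $( {\beta_k})_\theta=-\st\,\alpha_k+\ct\,\beta_k$ while the constants $B_{ijk}$ do not depend on $\theta$, the pair $\brac{a_{ij},b_{ij}}:=\brac{\sum_k B_{ijk}\alpha_k,\ \sum_k B_{ijk}\beta_k}$ transforms under rotation exactly as $( {u_m}, {v_m})$ does: $\sum_k B_{ijk}( {\alpha_k})_\theta=\ct\,a_{ij}+\st\,b_{ij}$, and similarly for the $\beta$-sum. Hence $g_{ij}(TU)$ has the same shape as $p(TU)$ with $\brac{-( {u_m})_\theta,-( {v_m})_\theta}$ replaced by the rotated pair built from $\brac{a_{ij},b_{ij}}$, and $h_{ij}$ stands to $q$ in the same relation, so the second identity follows from \eqref{prop:equality-5}--\eqref{prop:equality-6} applied with $(a_{ij},b_{ij})$ in place of $(u,v)$. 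The HSWME \eqref{eq:HSWME} then requires no extra work: since $A_H,B_H$ are obtained from $A,B$ by freezing the higher moments at zero, the decomposition and the block structure are unchanged, so the same argument yields \eqref{eq:rotation-invariance-non-conservative} for it as well. I expect the only genuinely delicate step to be the bookkeeping in the first paragraph: $P_2(U)$ is \emph{not} block diagonal, since the matrix $G(U)$ couples distinct moment indices, so one has to check carefully that conjugation by the block-diagonal $T$ still acts sub-block by sub-block as $T_2^{-1}(\cdot)T_2$, which is exactly where the uniformity of the diagonal blocks of $T$ is used.
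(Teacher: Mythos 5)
Your proposal is correct and follows the same route as the paper: decompose $P=P_1+P_2$, $Q=Q_1+Q_2$, exploit the $2\times2$ block structure of $T$, and reduce everything to the two $2\times2$ identities for $p,q$ and for $g_{ij},h_{ij}$. The paper leaves the $P_2$-part as a one-sentence "follows similarly," whereas you spell out the two points the paper elides -- that conjugation by $T$ acts sub-block by sub-block as $T_2^{-1}(\cdot)T_2$ even though $G(U)$ is a full block matrix (because all diagonal blocks of $\widetilde T$ are the same $T_2$), and that $(\sum_k B_{ijk}\alpha_k,\,\sum_k B_{ijk}\beta_k)$ transforms under rotation exactly like $(u_m,v_m)$ since the $B_{ijk}$ are $\theta$-independent constants -- so your write-up is the same argument carried out in full.
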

\begin{proof}
The matrix for the non-conservative part in \eqref{eq:matrix-non-conservative} consists of two parts.
We start the proof with the first part. Notice that $T(\theta)$ has the same block diagonal structure as $P_1(U)$ in \eqref{eq:matrix-non-conservative-part1-x} and $Q_1(U)$ in \eqref{eq:matrix-non-conservative-part1-y}. Therefore, it suffices to check that
\begin{equation*}
\cos\theta \, p(U) + \sin\theta \, q(U) = T_2^{-1} p(TU) T_2,
\end{equation*}
where the matrices $p(U)$ and $q(U)$ are defined in \eqref{eq:matrix-small-p-q} and $T_2$ defined in \eqref{eq:rotation-matrix-size-2-2}. This equality can be easily verified by direct calculations.

The proof of the second part $P_2(U)$ in \eqref{eq:matrix-non-conservative-part2-x} and $Q_2(U)$ in \eqref{eq:matrix-non-conservative-part2-y} follows similarly by using the multiplication of the block matrices and the rotational invariance property of each sub-block matrices $g_{ij}(U)$ and $h_{ij}(U)$ defined in \eqref{eq:matrix-non-conservative-sub-block}.
\end{proof}

Comibing the above two lemmas, we have the following theorem:
\begin{thm}[rotational invariance of SWME]
The SWME \eqref{eq:SWME} satisfies the rotational invariance:
\begin{equation}
    \cos\theta \, A(U) + \sin\theta \, B(U) = T^{-1}  A(TU) T.
\end{equation}    
\end{thm}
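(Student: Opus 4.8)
The plan is to assemble the claimed identity directly from the two lemmas just proved, using the splitting \eqref{eq:SWME-matrix-split-conser-nonconser}. First I would recall that by definition $A(U) = \partial_U F(U) + P(U)$ and $B(U) = \partial_U G(U) + Q(U)$, so that
\begin{equation*}
\cos\theta\, A(U) + \sin\theta\, B(U) = \bigl(\cos\theta\, \partial_U F(U) + \sin\theta\, \partial_U G(U)\bigr) + \bigl(\cos\theta\, P(U) + \sin\theta\, Q(U)\bigr).
\end{equation*}
Lemma \ref{lem:rotation-invariance-non-conservative} already handles the second bracket: it equals $T^{-1} P(TU) T$. So the only remaining work is to show that the first bracket, the Jacobian part, equals $T^{-1}\bigl(\partial_U F\bigr)(TU)\, T$.

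The key step is to differentiate the identity of Lemma \ref{lem:rotation-invariance-conservative}, namely $\cos\theta\, F(U) + \sin\theta\, G(U) = T^{-1} F(TU)$, with respect to $U$. Applying the chain rule to the right-hand side gives $\partial_U\bigl(T^{-1}F(TU)\bigr) = T^{-1}\, \bigl(\partial_U F\bigr)(TU)\, T$, since $T = T(\theta)$ does not depend on $U$. The left-hand side differentiates to $\cos\theta\, \partial_U F(U) + \sin\theta\, \partial_U G(U)$. Hence
\begin{equation*}
\cos\theta\, \partial_U F(U) + \sin\theta\, \partial_U G(U) = T^{-1}\, \bigl(\partial_U F\bigr)(TU)\, T .
\end{equation*}
Adding this to the non-conservative identity from Lemma \ref{lem:rotation-invariance-non-conservative} and using $A(TU) = \bigl(\partial_U F\bigr)(TU) + P(TU)$ yields
\begin{equation*}
\cos\theta\, A(U) + \sin\theta\, B(U) = T^{-1}\bigl(\bigl(\partial_U F\bigr)(TU) + P(TU)\bigr) T = T^{-1} A(TU) T,
\end{equation*}
which is exactly the claim.

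I do not expect any serious obstacle here; the proof is essentially two lines once Lemmas \ref{lem:rotation-invariance-conservative} and \ref{lem:rotation-invariance-non-conservative} are in hand. The one point requiring a word of care is the legitimacy of differentiating the vector identity $\cos\theta\, F(U)+\sin\theta\, G(U) = T^{-1}F(TU)$ componentwise in $U$: one should note that $F, G$ are smooth (polynomial in the components of $U$, plus the $\tfrac12 g h^2$ term) on the relevant domain $h>0$, so the chain-rule manipulation is valid, and that $T(\theta)$ is a constant matrix with respect to the differentiation variable $U$. An alternative, if one prefers to avoid differentiation, is to observe that rotational invariance of the flux pair $(F,G)$ is an instance of the general fact that if $F(TU) = T\bigl(\cos\theta\, F(U) + \sin\theta\, G(U)\bigr)$ then the associated quasilinear systems are related by the change of variables $U \mapsto TU$, $(x,y)\mapsto T_2(x,y)$; but the direct differentiation argument is cleaner and self-contained. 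Finally I would remark that the same argument applies verbatim to the HSWME \eqref{eq:HSWME}, since $A_H$ and $B_H$ are obtained by evaluating $A$ and $B$ at arguments with $\alpha_j = \beta_j = 0$ for $j \ge 2$, a constraint preserved by $T$ (because $T$ acts block-diagonally and maps the pair $(h\alpha_j, h\beta_j)$ to $(h(\alpha_j)_\theta, h(\beta_j)_\theta)$, which vanishes whenever $\alpha_j=\beta_j=0$).
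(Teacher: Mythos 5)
Your proof is correct and follows exactly the paper's own argument: differentiate the flux identity of Lemma \ref{lem:rotation-invariance-conservative} with respect to $U$, apply the chain rule (using that $T$ is constant in $U$), and add the non-conservative identity of Lemma \ref{lem:rotation-invariance-non-conservative}. The extra remarks on smoothness and on the HSWME are sound but not needed for the theorem as stated.
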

\begin{proof}
Taking the derivative with respect to $U$ on both sides of \eqref{eq:rotation-invariance-conservative} in Lemma \ref{lem:rotation-invariance-conservative}, we have
\begin{equation*}
\cos\theta  \, \partial_U F(U) + \sin\theta  \, \partial_U G(U) = T^{-1} \partial_U F(TU) T.
\end{equation*}
Combining this with \eqref{eq:rotation-invariance-non-conservative} in Lemma \ref{lem:rotation-invariance-non-conservative}, one immediately obtains
\begin{equation}
\cos\theta  \, A(U) + \sin \, \theta B(U) = T^{-1} A(TU) T.
\end{equation}
\end{proof}

Since the HSWME \eqref{eq:HSWME} is obtained by evaluating the coefficient matrices in the SWME \eqref{eq:SWME} at $\alpha_i=\beta_i=0$ for $2\le i\le N$, its rotational invariance follows immediately:
\begin{thm}[rotational invariance of HSWME]\label{thm:rotation-invariance-HSWME}
The HSWME \eqref{eq:HSWME} satisfies the rotational invariance:
\begin{equation}\label{eq:rotation-invariance-HSWME}
\cos\theta \, A_H(U) + \sin\theta  \, B_H(U) = T^{-1} A_H(TU) T.
\end{equation}
\end{thm}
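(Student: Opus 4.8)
The plan is to obtain \eqref{eq:rotation-invariance-HSWME} as an immediate corollary of the rotational invariance of the SWME, rather than repeating the component-wise computation of Lemmas \ref{lem:rotation-invariance-conservative}--\ref{lem:rotation-invariance-non-conservative}. The key observation is that $A_H$ and $B_H$ are nothing but $A$ and $B$ precomposed with a fixed linear truncation operator, and that this operator commutes with the rotation matrix $T(\theta)$.

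Concretely, I would first introduce the projection $\Pi := \diag(1, I_{2\times2}, I_{2\times2}, 0_{2\times2}, \cdots, 0_{2\times2}) \in \mathbb{R}^{(2N+3)\times(2N+3)}$ (with $N-1$ trailing zero blocks), which retains the blocks associated with $h$, $(hu_m, hv_m)$ and $(h\alpha_1, h\beta_1)$ and annihilates the blocks $(h\alpha_i, h\beta_i)$ for $2\le i\le N$. By the definitions of $A_H$ and $B_H$ we have $A_H(U) = A(\Pi U)$ and $B_H(U) = B(\Pi U)$ for every $U$. Next I would check the commutation $\Pi\, T(\theta) = T(\theta)\,\Pi$: both matrices are block diagonal with the same block partition $1,2,2,\dots,2$, and on each $2\times2$ block $\Pi$ equals either $I_{2\times2}$ or $0_{2\times2}$, each of which commutes with $T_2(\theta)$, while on the scalar block both act as the scalar $1$. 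Hence $\Pi(TU) = T(\Pi U)$. Finally, applying the SWME rotational invariance (which holds for an arbitrary argument) at the flattened state $\Pi U$ gives
\[
\cos\theta\, A(\Pi U) + \sin\theta\, B(\Pi U) = T^{-1} A(T\Pi U)\, T = T^{-1} A(\Pi T U)\, T = T^{-1} A_H(TU)\, T ,
\]
where the middle equality uses $\Pi T = T\Pi$ and the last uses $A(\Pi\,\cdot) = A_H(\cdot)$; rewriting the left-hand side as $\cos\theta\, A_H(U) + \sin\theta\, B_H(U)$ yields \eqref{eq:rotation-invariance-HSWME}.

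There is essentially no obstacle; the only points requiring a moment's care are that the SWME identity was established for \emph{all} $U$ (so it may legitimately be evaluated at $\Pi U$, which need not be physically admissible), and that the commutation $\Pi T = T\Pi$ is genuinely block-wise trivial. If one wished to avoid introducing $\Pi$ altogether, an equivalent route is to observe directly that the block structure of $T$ prevents the $(i,j)$ sub-blocks of $A$ and $B$ with $i,j\ge 2$ from mixing with the retained blocks under the similarity by $T$, so that imposing $\alpha_i=\beta_i=0$ for $i\ge2$ before or after rotating produces the same matrix.
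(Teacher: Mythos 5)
Your proposal is correct and takes essentially the same route as the paper's (first) proof: the paper observes that $A_H,B_H$ are obtained from $A,B$ by setting $\alpha_i=\beta_i=0$ for $i\ge 2$ and asserts that the HSWME rotational invariance then ``follows immediately'' from the SWME result. Your introduction of the block-diagonal projection $\Pi$ and the verification $\Pi T=T\Pi$ is simply a careful, explicit version of that one-line argument, and it is sound (including the remark that the SWME identity was proved for all $U$, so evaluating it at $\Pi U$ is legitimate).
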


\subsection{Alternative proof of rotational invariance of the SWME and the HSWME}\label{subsec:alternative-proof-rotation-invariance}

In the previous part, we prove the rotational invariance of the HSWME \eqref{eq:HSWME} by first proving the rotational invariance of the SWME \eqref{eq:SWME}. In this part, we will show an alternative proof of the rotational invariance of the HSWME \eqref{eq:HSWME} with the aid of its block structure.

{We first show the explicit form of the coefficient matrices of the HSWME \eqref{eq:HSWME}. The proof that establishes structure of the coefficient matrices is given in Theorem 4.3.1 and Theorem 4.3.2 in \cite{verbiest2022}, with another ordering of variables. We omit the proof and only present the results.}
\begin{lem}[coefficient matrices of HSWME]\label{lem:coefficient-matrices-HSWME}
The coefficient matrices of the HSWME \eqref{eq:HSWME} are given by:
\begin{equation}\label{eq:coefficient-matrix-HSMWE-x}
    A_H = 
    \begin{aligned}
        \begin{pmatrix}
        0 & 1 & 0 \\
        - {u_m}^2 - \frac{\alpha_1^2}{3} + gh & 2 {u_m} & 0 & \frac{2\alpha_1}{3} & 0 \\
        - {u_m} {v_m} - \frac{\alpha_1\beta_1}{3} &  {v_m} &  {u_m} & \frac{\beta_1}{3} & \frac{\alpha_1}{3} \\
        -2 {u_m}\alpha_1 & 2\alpha_1 & 0 &  {u_m} & 0 & \frac{3}{5}\alpha_1 & 0 & \cdots & 0 & 0 \\
        -( {u_m}\beta_1+ {v_m}\alpha_1) & \beta_1 & \alpha_1 & 0 &  {u_m} & \frac{1}{5}\beta_1 & \frac{2}{5}\alpha_1 & \cdots & 0 & 0 \\
        - \frac{2}{3}\alpha_1^2 & 0 & 0 & \frac{1}{3}\alpha_1 & 0 &  {u_m} & 0 &  \cdots & 0 & 0 \\
        - \frac{2}{3}\alpha_1\beta_1 & 0 & 0 & -\frac{1}{3}\beta_1 & \frac{2}{3}\alpha_1 & 0 &  {u_m} & \cdots & 0 & 0 \\
         &  &  &  &  & \ddots & \ddots & \ddots & \frac{N+1}{2N+1}\alpha_1 & 0 \\
         &  &  &  &  & \ddots & \ddots & \ddots & \frac{1}{2N+1}\beta_1 & \frac{N}{2N+1}\alpha_1 \\
         &  &  &  &  &  & \frac{N-1}{2N-1}\alpha_1 & 0 &  {u_m} & 0 \\
         &  &  &  &  &  & -\frac{1}{2N-1}\beta_1 & \frac{N}{2N-1}\alpha_1 & 0 &  {u_m}
        \end{pmatrix}
    \end{aligned}   
\end{equation}
in the $x$ direction and
\begin{equation}\label{eq:coefficient-matrix-HSMWE-y}
    B_H = 
    \begin{aligned}
        \begin{pmatrix}
        0 & 0 & 1 \\
        - {u_m} {v_m} - \frac{\alpha_1\beta_1}{3} &  {v_m} &  {u_m} & \frac{\beta_1}{3} & \frac{\alpha_1}{3} \\
        - {v_m}^2 - \frac{\beta_1^2}{3} + gh & 0 & 2 {v_m} & 0 & \frac{2\beta_1}{3} \\
        -( {u_m}\beta_1+ {v_m}\alpha_1) & \beta_1 & \alpha_1 &  {v_m} & 0 & \frac{2}{5}\beta_1 & \frac{1}{5}\alpha_1 \\
        -2 {v_m}\beta_1 & 0 & 2\beta_1 & 0 &  {v_m} & 0 & \frac{3}{5}\beta_1 \\
        - \frac{2}{3}\alpha_1\beta_1 & 0 & 0 & \frac{2}{3}\beta_1 & -\frac{1}{3}\alpha_1 &  {v_m} & 0 & \cdots & 0 & 0 \\
        - \frac{2}{3}\beta_1^2 & 0 & 0 & 0 & \frac{1}{3}\beta_1 & 0 &  {v_m} &  \cdots & 0 & 0 \\
         &  &  &  &  & \ddots & \ddots & \ddots & \frac{N}{2N+1}\beta_1 & \frac{1}{2N+1}\alpha_1 \\
         &  &  &  &  & \ddots & \ddots & \ddots & 0 & \frac{N+1}{2N+1}\beta_1 \\
         &  &  &  &  &  & \frac{N}{2N-1}\beta_1 & -\frac{1}{2N-1}\alpha_1 &  {v_m} & 0 \\
         &  &  &  &  & & 0 & \frac{N-1}{2N-1}\beta_1 & 0 &  {v_m}
        \end{pmatrix}
    \end{aligned}   
\end{equation}
in the $y$ direction.
\end{lem}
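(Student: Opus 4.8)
The plan is not to re-derive the model but to assemble the matrices from the decomposition $A(U)=\partial_U F(U)+P_1(U)+P_2(U)$ and $B(U)=\partial_U G(U)+Q_1(U)+Q_2(U)$ already recorded in \eqref{eq:SWME-matrix-split-conser-nonconser}--\eqref{eq:matrix-non-conservative-sub-block}, and then to evaluate the result at the HSWME state $\alpha_i=\beta_i=0$ for $2\le i\le N$. The non-conservative pieces cost almost nothing: $P_1$ contributes the fixed $2\times2$ blocks $p(U)$, while in $P_2$ every entry $\sum_k B_{ijk}\alpha_k$ of the sub-blocks $g_{ij}$ collapses to its single surviving term $B_{ij1}\alpha_1$ (and likewise $B_{ij1}\beta_1$), so $P_2$ is determined once the constants $B_{ij1}$ are known; the same is true of $Q_1,Q_2$. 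Hence the real work is (i) the constants $A_{i1k}$ and $B_{ij1}$ and (ii) the Jacobian $\partial_U F$ evaluated at the reduced state.

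For (i) I would record $\phi_j(\zeta)=P_j(1-2\zeta)$ together with three elementary facts: orthogonality $\int_0^1\phi_i\phi_j\,d\zeta=\delta_{ij}/(2i+1)$, the three-term recurrence $\phi_1\phi_k=\frac{k+1}{2k+1}\phi_{k+1}+\frac{k}{2k+1}\phi_{k-1}$, and the antiderivative identity $\int_0^\zeta\phi_j\,d\hat\zeta=\frac{1}{2(2j+1)}(\phi_{j-1}-\phi_{j+1})$ (valid because $\phi_m(0)=1$). The first two give $A_{i1k}=\frac{k+1}{2k+1}\delta_{i,k+1}+\frac{k}{2k+1}\delta_{i,k-1}$ at once, and only the pattern $A_{i1k}$ matters because setting $\alpha_j=\beta_j=0$ for $j\ge2$ kills every other one. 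For $B_{ij1}$ I would substitute the antiderivative identity, expand $\phi_1\phi_{j\pm1}$ by the recurrence, and integrate by parts against $\phi_i'$, using $\int_0^1\phi_i'\phi_n\,d\zeta=(-1)^{i+n}-1$ for $n<i$ and $0$ for $n\ge i$; the recurrence coefficients then telescope, so $B_{ij1}$ is nonzero only for $j=i\pm1$, with $B_{i,i-1,1}=-\frac{i+1}{2i-1}$, $B_{i,i+1,1}=-\frac{i}{2i+3}$, and $B_{ii1}=0$. This telescoping is precisely what keeps $A_H$ banded; combining with $2A_{i,1,i\mp1}$ reproduces the off-diagonal coupling coefficients $\frac{i-1}{2i-1}$ and $\frac{i+2}{2i+3}$ in \eqref{eq:coefficient-matrix-HSMWE-x}, and $B_{ii1}=0$ turns the naive diagonal $2u_m$ of $\partial_U F$ into the $u_m$ appearing in the moment rows after $p(U)$ is added.

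For (ii) I would compute $\partial_U F$ column by column using the chain-rule relations $\partial u_m/\partial(hu_m)=1/h$, $\partial u_m/\partial h=-u_m/h$, $\partial\alpha_i/\partial(h\alpha_i)=1/h$, and so on, evaluate each entry as a function of $U$ and only then restrict to $\alpha_i=\beta_i=0$ ($i\ge2$), and finally add $P_1+P_2$; reading off the entries gives \eqref{eq:coefficient-matrix-HSMWE-x}. The $y$-direction matrix then requires no new computation: by the rotational invariance of the HSWME (Theorem \ref{thm:rotation-invariance-HSWME}), choosing $\theta=\pi/2$ in \eqref{eq:rotation-invariance-HSWME} gives $B_H(U)=T^{-1}(\pi/2)\,A_H(T(\pi/2)U)\,T(\pi/2)$, from which \eqref{eq:coefficient-matrix-HSMWE-y} follows by a permutation-and-sign rearrangement; alternatively one repeats (ii) verbatim with $G$, $Q_1$, $Q_2$ in place of $F$, $P_1$, $P_2$.

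The main obstacle is bookkeeping rather than anything conceptual: one must check, uniformly in $i$ and $N$, that the flux-Jacobian contribution and the $P_2$ (respectively $Q_2$) contribution add to the stated entries, in particular that the general-$N$ pattern $\frac{N+1}{2N+1}$, $\frac{1}{2N+1}$, $\frac{N}{2N+1}$ in the bottom-right block is reproduced, which is where the telescoping identity for $B_{ij1}$ does the real work. One subtlety to flag in the write-up is that the retained amplitudes $\alpha_1,\beta_1$ enter both through their values and through the differentiation, whereas $\alpha_i,\beta_i$ with $i\ge2$ enter only through the differentiation, so the order ``differentiate first, then restrict to the HSWME state'' must be respected throughout.
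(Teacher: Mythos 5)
Your proposal is correct, and it differs from the paper's ``proof'' in a material way: the paper does not derive these matrices at all, but simply defers to Theorems~4.3.1 and~4.3.2 of the reference \cite{verbiest2022} (with a remark about reordering variables). Your write-up supplies the derivation that the paper omits. Assembling $A_H = \partial_U F + P_1 + P_2$ from the decomposition \eqref{eq:SWME-matrix-split-conser-nonconser}--\eqref{eq:matrix-non-conservative-sub-block}, computing the surviving Legendre constants, and restricting to $\alpha_i=\beta_i=0$ for $i\ge 2$ afterward is exactly the right computation, and your specific claims check out: the three-term recurrence together with orthogonality gives $A_{i1k}=\tfrac{k+1}{2k+1}\delta_{i,k+1}+\tfrac{k}{2k+1}\delta_{i,k-1}$; the antiderivative identity $\int_0^\zeta\phi_j=\tfrac{1}{2(2j+1)}(\phi_{j-1}-\phi_{j+1})$ plus $\int_0^1\phi_i'\phi_n\,d\zeta=(-1)^{i+n}-1$ for $n<i$ (and $0$ for $n\ge i$) indeed telescopes so that $B_{ij1}$ survives only for $j=i\pm 1$ with $B_{i,i-1,1}=-\tfrac{i+1}{2i-1}$, $B_{i,i+1,1}=-\tfrac{i}{2i+3}$, $B_{ii1}=0$. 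Combining: the flux-Jacobian super- and sub-diagonals $\tfrac{2(i+1)}{2i+3}\alpha_1$ and $\tfrac{2i}{2i-1}\alpha_1$ together with the $P_2$ entries $B_{i,i\pm1,1}\alpha_1$ yield the stated $\tfrac{i+2}{2i+3}\alpha_1$ and $\tfrac{i-1}{2i-1}\alpha_1$, and the flux diagonal $2u_m$ plus the $P_1$ block $p(U)$ gives $u_m$. Your route for $B_H$ via $\theta=\pi/2$ in Theorem~\ref{thm:rotation-invariance-HSWME} is legitimate and not circular, since that theorem follows from the SWME rotational invariance in Section~\ref{subsec:rotational-invariance-SWME-HSWME}, which does not rely on Lemma~\ref{lem:coefficient-matrices-HSWME}.

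One minor phrasing slip worth fixing: you write that $B_{ii1}=0$ ``turns the naive diagonal $2u_m$ of $\partial_U F$ into the $u_m$''. The reduction $2u_m \mapsto u_m$ on the moment rows is produced by the $P_1$ block $p(U)$ (its $(1,1)$ entry $-u_m$); $B_{ii1}=0$ merely says $P_2$ contributes nothing to the diagonal. Also, the combination you abbreviate as ``$2A_{i,1,i\mp1}$'' is really $2A_{i,i\pm1,1}\alpha_1 + B_{i,i\pm1,1}\alpha_1$; it would be clearer to display both summands. Neither of these affects the validity of the argument.
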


\begin{proof}
    See the proof in Theorem 4.3.1 and Theorem 4.3.2 in \cite{verbiest2022} with another ordering of variables.
\end{proof}

Now we present the alternative proof of Theorem \ref{thm:rotation-invariance-HSWME} by using the coefficient matrices of the HSWME \eqref{eq:HSWME} given in Lemma \ref{lem:coefficient-matrices-HSWME}.
\begin{proof}[\protect{Alternative proof of Theorem \ref{thm:rotation-invariance-HSWME}}]
    The coefficient matrix $A_H$ in \eqref{eq:coefficient-matrix-HSMWE-x} can be written as in the block form:
    \begin{equation*}
        A_H =
    \begin{pmatrix}
        0 & d_1 \\
        d_2 & A_{11} & A_{12} \\
        d_3 & A_{21} & A_{22} & A_{23} \\
        d_4 &  & \ddots & \ddots & \ddots \\
        & &  & A_{N,N-1} & A_{N,N} & A_{N,N+1} \\
        &  &  &  & A_{N+1,N} & A_{N+1,N+1} \\
    \end{pmatrix}    
    \end{equation*}
    with 
    \begin{equation*}
        d_1 = (1, 0), \quad d_2 =
        \begin{pmatrix}
            - {u_m}^2 - \frac{\alpha_1^2}{3} + gh \\
            - {u_m} {v_m} - \frac{\alpha_1\beta_1}{3}
        \end{pmatrix}
        ,\quad d_3 =
        \begin{pmatrix}
            -2 {u_m}\alpha_1 \\
            -( {u_m}\beta_1+ {v_m}\alpha_1)
        \end{pmatrix}
        ,\quad d_4 =
        \begin{pmatrix}
            - \frac{2}{3}\alpha_1^2 \\
            - \frac{2}{3}\alpha_1\beta_1
        \end{pmatrix},
    \end{equation*}
    \begin{equation*}
        A_{ii} = 
        \begin{pmatrix}
            2 {u_m} & 0 \\
             {v_m} &  {u_m}
        \end{pmatrix}
        ,\quad
        A_{i,i+1} =
        \begin{pmatrix}
            \frac{2i}{2i+1}\alpha_1 & 0 \\
            \frac{i}{2i+1}\beta_1 & \frac{i}{2i+1}\alpha_1
        \end{pmatrix}
        ,\quad
        A_{i+1,i} =
        \begin{pmatrix}
            \frac{2i}{2i-1}\alpha_1 & 0 \\
            \frac{i}{2i-1}\beta_1 & \frac{i}{2i-1}\alpha_1
        \end{pmatrix}.
    \end{equation*}
    The coefficient matrix $B_H$ in \eqref{eq:coefficient-matrix-HSMWE-y} can be written as in the block form similarly:
    \begin{equation*}
        B_H =
    \begin{pmatrix}
        0 & f_1 \\
        f_2 & B_{11} & B_{12} \\
        f_3 & B_{21} & B_{22} & B_{23} \\
        f_4 &  & \ddots & \ddots & \ddots \\
        & &  & B_{N,N-1} & B_{N,N} & B_{N,N+1} \\
        &  &  &  & B_{N+1,N} & B_{N+1,N+1}
    \end{pmatrix}    
    \end{equation*}
    with
    \begin{equation*}
        f_1 = (0, 1), \quad f_2 =
        \begin{pmatrix}    
            - {u_m} {v_m} - \frac{\alpha_1\beta_1}{3} \\
            - {v_m}^2 - \frac{\beta_1^2}{3} + gh
        \end{pmatrix}
        ,\quad f_3 =
        \begin{pmatrix}
            -( {u_m}\beta_1+ {v_m}\alpha_1) \\
            -2 {v_m}\beta_1
        \end{pmatrix}
        ,\quad f_4 =
        \begin{pmatrix}
            - \frac{2}{3}\alpha_1\beta_1 \\
            - \frac{2}{3}\beta_1^2
        \end{pmatrix},
    \end{equation*}
    \begin{equation*}
        B_{ii} = 
        \begin{pmatrix}
             {v_m} &  {u_m}  \\
            0 & 2 {v_m} 
        \end{pmatrix}
        ,\quad
        B_{i,i+1} =
        \begin{pmatrix}
            \frac{i}{2i+1}\beta_1 & \frac{i}{2i+1}\alpha_1 \\
            0 & \frac{2i}{2i+1}\beta_1 \\
        \end{pmatrix}
        ,\quad
        B_{i+1,i} =
        \begin{pmatrix}
            \frac{i}{2i-1}\beta_1 & \frac{i}{2i-1}\alpha_1 \\
            0 & \frac{2i}{2i-1}\beta_1
        \end{pmatrix}.
    \end{equation*}
    
    Next, we compute $T^{-1}A_H(TU)T$ and verify that it is equal to $\cos\theta A_H(U) + \sin \theta B_H(U)$.
    \begin{equation*}
    \begin{aligned}        
        T^{-1} A_H(TU) T
        ={}&
        \diag(1, T_2^{-1}, \cdots, T_{2}^{-1})
        \begin{pmatrix}
            0 & d_1 \\
            d_2 & A_{11} & A_{12} \\
            d_3 & A_{21} & A_{22} & A_{23} \\
            d_4 &  & \ddots & \ddots & \ddots \\
            & &  & A_{N,N-1} & A_{N,N} & A_{N,N+1} \\
            &  &  &  & A_{N+1,N} & A_{N+1,N+1}    
        \end{pmatrix}
        T \\
        ={}& 
        \begin{pmatrix}
            0 & d_1 \\
            T_2^{-1} d_2 & T_2^{-1} A_{11} & T_2^{-1} A_{12} \\
            T_2^{-1} d_3 & T_2^{-1} A_{21} & T_2^{-1} A_{22} & T_2^{-1} A_{23} \\
            T_2^{-1} d_4 &  & \ddots & \ddots & \ddots \\
            & &  & T_2^{-1} A_{N,N-1} & T_2^{-1} A_{N,N} & T_2^{-1} A_{N,N+1} \\
            &  &  &  & T_2^{-1} A_{N+1,N} & T_2^{-1} A_{N+1,N+1}    
        \end{pmatrix}
        T
        \\
        ={}& 
        \begin{pmatrix}
            0 & d_1 T_2 \\
            T_2^{-1} d_2 & T_2^{-1} A_{11} T_2 & T_2^{-1} A_{12} T_2 \\
            T_2^{-1} d_3 & T_2^{-1} A_{21} T_2 & T_2^{-1} A_{22} T_2 & T_2^{-1} A_{23} T_2 \\
            T_2^{-1} d_4 &  & \ddots & \ddots & \ddots \\
            & &  & T_2^{-1} A_{N,N-1} T_2 & T_2^{-1} A_{N,N} T_2 & T_2^{-1} A_{N,N+1} T_2 \\
            &  &  &  & T_2^{-1} A_{N+1,N} T_2 & T_2^{-1} A_{N+1,N+1} T_2
        \end{pmatrix}        
    \end{aligned}
    \end{equation*}
    where the independent variable $TU$ in $d_i$ for $1\le i\le 4$ and $A_{ij}$ for $1\le i,j\le N+1$ is omitted for simplicity.

    Now we compute each block of the matrix $T^{-1}A_H(TU)T$:
    \begin{equation*}
        d_1 T_2 =
        \begin{pmatrix}
            1 & 0
        \end{pmatrix}
        \begin{pmatrix}
            \cos\theta & \sin\theta \\
            -\sin\theta & \cos\theta
        \end{pmatrix}
        =
        \begin{pmatrix}
            \cos\theta & \sin\theta
        \end{pmatrix}
        = \cos\theta \, d_1 + \sin\theta \, f_1.
    \end{equation*}
    \begin{equation*}
        T_2^{-1} d_2(TU) = \cos\theta \, d_2(U) + \sin\theta \, f_2(U).
    \end{equation*}
    \begin{equation*}
        T_2^{-1} d_3(TU) = \cos\theta \, d_3(U) + \sin\theta \, f_3(U).
    \end{equation*}
    \begin{equation*}
        T_2^{-1} d_4(TU) = \cos\theta \, d_4(U) + \sin\theta \, f_4(U).
    \end{equation*}        
    \begin{equation*}
    \begin{aligned}                
        T_2^{-1} A_{ii} T_2 &= 
        \begin{pmatrix}
            \cos\theta & -\sin\theta \\
            \sin\theta & \cos\theta
        \end{pmatrix}
        \begin{pmatrix}
            2 {u_m} & 0 \\
             {v_m} &  {u_m}
        \end{pmatrix}        
        \begin{pmatrix}
            \cos\theta & \sin\theta \\
            -\sin\theta & \cos\theta
        \end{pmatrix}   
        \\
        &= \cos\theta
        \begin{pmatrix}
            2 {u_m} & 0 \\
             {v_m} &  {u_m}
        \end{pmatrix}                
        +\sin\theta
        \begin{pmatrix}
             {v_m} &  {u_m} \\
            0 & 2 {v_m}
        \end{pmatrix}
        \\
        &= \cos\theta A_{ii} + \sin\theta B_{ii}.
    \end{aligned}             
    \end{equation*}
    \begin{equation*}
        T_2^{-1} A_{i,i+1} T_2 = \cos\theta A_{i,i+1} + \sin\theta B_{i,i+1}.
    \end{equation*}
    \begin{equation*}
        T_2^{-1} A_{i+1,i} T_2 = \cos\theta A_{i+1,i} + \sin\theta B_{i+1,i}.
    \end{equation*}
    Therefore, we have proved \eqref{eq:rotation-invariance-HSWME} in Theorem \ref{thm:rotation-invariance-HSWME}.
\end{proof}

\begin{rem}
    The rotational invariance of the SWME \eqref{eq:SWME} can also be proved in the similar line as the above proof by using the block structure of the coefficient matrices  explicitly. We omit it here for  space considerations.
\end{rem}

\subsection{General closure relation with the rotational invaiance}

From the above alternative proof, we observe that the rotational invariance of the HSWME relies on the rotational invariance of each sub-block (of size $2\times2$) of the coefficient matrices.
Motivated by this observation, we would like to analyze the rotational invariance of matrices of size $2\times2$ and find out some general relations which satisfy the rotational 
invariance.  We note that this will be key in deriving our new model, which has a more general closure relation than that of HSWME. 

\begin{defn}[rotational invariance of $2\times2$ matrices]\label{def:rotational-invariance-block-matrix}
    Consider two matrices $A(V)$ and $B(V)$ of size $2\times2$ given by
    \begin{equation*}
        A(V) = 
        \begin{pmatrix}
            a_{11}(V) & a_{12}(V) \\
            a_{21}(V) & a_{22}(V)
        \end{pmatrix}
        \in\mathbb{R}^{2\times2},
    \end{equation*}
    and
    \begin{equation*}
        B(V) = 
        \begin{pmatrix}
            b_{11}(V) & b_{12}(V) \\
            b_{21}(V) & b_{22}(V)
        \end{pmatrix}
        \in\mathbb{R}^{2\times2},
    \end{equation*}
    with $V=(p, \, q)^T \in\mathbb{R}^2$.
    We say that $A(V)$ and $B(V)$ satisfy the rotational invariance if    \begin{equation}\label{eq:rotational-invariance-block-matrix}
        T_2^{-1} A(T_2 V) T_2 = \cos\theta \, A(V) + \sin\theta \, B(V),
    \end{equation}
    holds true for any $0\le \theta < 2\pi$ and any $V\in\mathbb{R}^2$ with $T_2$ being the rotational matrix
    \begin{equation*}
        T_2 = 
        \begin{pmatrix}
            \cos\theta & \sin\theta \\
            -\sin\theta & \cos\theta
        \end{pmatrix}.
    \end{equation*}
\end{defn}
Note that, in the definition, the dummy variables $(p,q)$ could be $( {u_m}, {v_m})$ or $(\alpha_i,\beta_i)$ for $1\le i\le N$ in the shallow water moment model.
In the following part, we will derive some conditions for the rotational invariance of $2\times2$ matrices to be satisfied. We first present several necessary conditions:
\begin{lem}\label{lem:rotational-invariance-block-matrix-necessary-condition}
    If the matrices $A(V)$ and $B(V)$ given by
    \begin{equation}
        A(V) = 
        \begin{pmatrix}
            a_{11}(V) & a_{12}(V) \\
            a_{21}(V) & a_{22}(V)
        \end{pmatrix}
        , \quad
        B(V) =
        \begin{pmatrix}
            b_{11}(V) & b_{12}(V) \\
            b_{21}(V) & b_{22}(V)
        \end{pmatrix}
    \end{equation}
    satisfy the rotational invariance of $2\times 2$ matrices, then the following relations hold:
    \begin{enumerate}
        \item All the entries in $B(V)$ are determined by $A(V)$ in the following way:
        \begin{equation}
        \begin{aligned}
            b_{11}(p,q) &= a_{22}(q, -p), \\
            b_{12}(p,q) &= -a_{21}(q, -p), \\
            b_{21}(p,q) &= -a_{12}(q, -p), \\
            b_{22}(p,q) &= a_{11}(q, -p).
        \end{aligned}
        \end{equation}
        
        \item $A(V)$ is an odd function in the sense that:
        \begin{equation}
            A(-V) = -A(V).
        \end{equation}
    \end{enumerate}
\end{lem}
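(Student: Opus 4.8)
The plan is to read off both conclusions by specializing the defining identity \eqref{eq:rotational-invariance-block-matrix} to two particular angles, $\theta=\pi/2$ and $\theta=\pi$, at which the scalar weights $\cos\theta,\sin\theta$ and the rotation matrix $T_2$ degenerate in a convenient way. Since the identity is assumed to hold for \emph{all} $\theta$, it certainly holds at these two values, and each of them isolates one of the two claims.

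For Part 1, I would set $\theta=\pi/2$, so that $\cos\theta=0$, $\sin\theta=1$, and, writing $V=(p,q)^T$,
\begin{equation*}
T_2=\begin{pmatrix} 0 & 1 \\ -1 & 0 \end{pmatrix},\qquad
T_2^{-1}=\begin{pmatrix} 0 & -1 \\ 1 & 0 \end{pmatrix},\qquad
T_2V=(q,-p)^T.
\end{equation*}
Substituting into \eqref{eq:rotational-invariance-block-matrix} collapses the right-hand side to $B(V)$, leaving $B(V)=T_2^{-1}A(q,-p)\,T_2$. Carrying out the $2\times2$ conjugation gives
\begin{equation*}
T_2^{-1}A(q,-p)\,T_2=
\begin{pmatrix} a_{22}(q,-p) & -a_{21}(q,-p) \\ -a_{12}(q,-p) & a_{11}(q,-p) \end{pmatrix},
\end{equation*}
and comparing entries yields the four stated formulas for $b_{11},b_{12},b_{21},b_{22}$ in terms of $A$. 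For Part 2, I would set $\theta=\pi$, so that $\cos\theta=-1$, $\sin\theta=0$, and $T_2=-I_2$ (hence $T_2^{-1}=-I_2$ and $T_2V=-V$). Plugging into \eqref{eq:rotational-invariance-block-matrix} gives $(-I_2)\,A(-V)\,(-I_2)=-A(V)$, that is, $A(-V)=-A(V)$, which is exactly the claimed oddness.

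There is no real obstacle here: the whole argument is the observation that $\theta=\pi/2$ annihilates the $A$-term on the right (exposing $B$ as a fixed conjugate of $A$ with swapped, sign-flipped arguments), while $\theta=\pi$ annihilates the $B$-term and simultaneously turns $T_2$ into $-I_2$. The only care required is routine bookkeeping — tracking the substitution $V\mapsto T_2V=(q,-p)^T$ and the signs appearing in the $2\times2$ matrix product — together with the remark that these conditions are necessary only; they are the constraints that will later be used to cut down the admissible closures, with sufficiency (for the full-angle identity) to be verified separately.
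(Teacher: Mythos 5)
Your proof is correct. Part 1 is identical to the paper's argument: set $\theta=\pi/2$, conjugate $A(q,-p)$ by $T_2$, and read off the entries. For Part 2 you take a slightly more direct route than the paper. The paper substitutes $\theta\mapsto\theta+\pi$ in the general identity, compares with the original (so it uses two copies of the identity at angles differing by $\pi$), and concludes $A(T_2V)=-A(-T_2V)$ before invoking invertibility of $T_2$. You simply specialize to $\theta=\pi$, where $T_2=-I_2$, and the conjugation trivializes to $A(-V)=-A(V)$ in one line. Both are correct; yours is a bit cleaner for Part 2 and avoids the extra cancellation step and the appeal to invertibility of $T_2$.
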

\begin{proof}
\begin{enumerate}
    \item 
By taking $\theta=\frac{\pi}{2}$ in \eqref{eq:rotational-invariance-block-matrix}, we have
\begin{equation*}
    T_2 = 
    \begin{pmatrix}
        0 & 1 \\
        -1 & 0
    \end{pmatrix},
\end{equation*}
and
\begin{equation*}
    T_2 V = 
    \begin{pmatrix}
        q \\
        -p
    \end{pmatrix}.
\end{equation*}
Therefore, we have
\begin{equation*}
    T_2^{-1} A(T_2 V) T_2 =
    \begin{pmatrix}
        0 & -1 \\
        1 & 0
    \end{pmatrix}
    \begin{pmatrix}
        a_{11}(TV) & a_{12}(TV) \\
        a_{21}(TV) & a_{22}(TV)
    \end{pmatrix}
    \begin{pmatrix}
        0 & 1 \\
        -1 & 0
    \end{pmatrix}
    = 
    \begin{pmatrix}
        a_{22}(TV) & -a_{21}(TV) \\
        -a_{12}(TV) & a_{11}(TV)
    \end{pmatrix}.
\end{equation*}
Then the relation \eqref{eq:rotational-invariance-block-matrix} reduces to
\begin{equation*}
    \begin{pmatrix}
        b_{11}(V) & b_{12}(V) \\
        b_{21}(V) & b_{22}(V)
    \end{pmatrix}
    = 
    \begin{pmatrix}
        a_{22}(TV) & -a_{21}(TV) \\
        -a_{12}(TV) & a_{11}(TV)
    \end{pmatrix},
\end{equation*}
which completes the proof.

\item
Taking $\theta$ to be $(\theta+\pi)$ in \eqref{eq:rotational-invariance-block-matrix}, we have
\begin{equation*}
    \cos(\theta+\pi) A(V) + \sin(\theta+\pi) B(V) = (-T_2)^{-1} A(-T_2 V) (-T_2)
\end{equation*}
which implies
\begin{equation*}
    -\cos(\theta) A(V) - \sin(\theta) B(V) = T_2^{-1} A(-T_2 V) T_2.
\end{equation*}
Comparing the above equation with \eqref{eq:rotational-invariance-block-matrix}, we have
\begin{equation*}
    A(T_2 V) = -A(-T_2 V).
\end{equation*}
Since $T_2$ is invertible, we have that $A$ is an odd function.
\end{enumerate}
\end{proof}

Next, we will restrict to the case of linear functions and find out the necessary and sufficient conditions for the matrices to be rotational invariant. The conditions will be presented in Thereom \ref{thm:rotational-invariance-block-matrix}. To prove the theorem, we prepare the following lemma:
\begin{lem}\label{lem:rotational-invariance-block-matrix-linear}
    Assume that $A(V)$ and $B(V)$ satisfy the rotational invariance of $2\times2$ matrices and they are linear functions of $V$.
    \begin{enumerate}
        \item If $A(V)$ only has two non-zero entries in the first column:
        \begin{equation*}
            A(V) = 
            \begin{pmatrix}
                a_{11}(V) & 0 \\
                a_{21}(V) & 0
            \end{pmatrix},
        \end{equation*}
        then $A(V)$ and $B(V)$ have to be of the form:        
        \begin{equation}\label{eq:rotational-invariance-block-matrix-linear-case-1}
            A(V) = 
            c_1
            \begin{pmatrix}
                p & 0\\
                q & 0
            \end{pmatrix}
            +
            c_2
            \begin{pmatrix}
                q & 0 \\
                -p & 0
            \end{pmatrix},
        \end{equation}
        and
        \begin{equation*}
            B(V) = 
            c_1
            \begin{pmatrix}
                0 & p \\
                0 & q
            \end{pmatrix}
            +
            c_2
            \begin{pmatrix}
                0 & q \\
                0 & -p
            \end{pmatrix},
        \end{equation*}
        where $c_1, c_2\in\mathbb{R}$.

        \item If $A(V)$ only has two non-zero entries in the diagonal:
        \begin{equation*}
            A(V) = 
            \begin{pmatrix}
                a_{11}(V) & 0 \\
                0 & a_{22}(V)
            \end{pmatrix},
        \end{equation*}
        then $A(V)$ and $B(V)$ have to be of the form:
        \begin{equation*}
            A(V)
            = c_1
            \begin{pmatrix}
                p & 0\\
                0 & p
            \end{pmatrix}
            + c_2
            \begin{pmatrix}
                q & 0\\
                0 & q
            \end{pmatrix},
        \end{equation*}
        and
        \begin{equation*}
            B(V) =
            c_1
            \begin{pmatrix}
                q & 0\\
                0 & q
            \end{pmatrix}
            -c_2
            \begin{pmatrix}
                p & 0\\
                0 & p
            \end{pmatrix},
        \end{equation*}
        where $c_1, c_2\in\mathbb{R}$.

        \item If $A(V)$ only has two non-zero entries in the second column:
        \begin{equation*}
            A(V) = 
            \begin{pmatrix}
                0 & a_{21}(V) \\
                0 & a_{22}(V)
            \end{pmatrix},
        \end{equation*}
        then $A(V)$ and $B(V)$ have to be of the form:
        \begin{equation*}
            A(V)
            = c_1
            \begin{pmatrix}
                0 & p \\
                0 & q
            \end{pmatrix}
            + c_2
            \begin{pmatrix}
                0 & q \\
                0 & -p
            \end{pmatrix},
        \end{equation*}
        and
        \begin{equation*}
            B(V)
            = c_1
            \begin{pmatrix}
                -p & 0 \\
                -q & 0
            \end{pmatrix}
            + c_2
            \begin{pmatrix}
                -q & 0 \\
                p & 0
            \end{pmatrix},
        \end{equation*}   
        where $c_1, c_2\in\mathbb{R}$.     
    \end{enumerate}    
\end{lem}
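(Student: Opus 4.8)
The plan is to convert the requirement that \eqref{eq:rotational-invariance-block-matrix} hold for \emph{all} $\theta$ into a finite linear system on the coefficients of $A$, and then to solve that system under each of the three structural hypotheses.

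Since $A$ is linear in $V=(p,q)^T$, I would first write $A(V)=pM_1+qM_2$ and $B(V)=pN_1+qN_2$ with constant matrices $M_1,M_2,N_1,N_2\in\mathbb{R}^{2\times2}$. By Lemma~\ref{lem:rotational-invariance-block-matrix-necessary-condition}, $B$ is already determined by $A$, so $N_1,N_2$ are explicit (linear) expressions in the entries of $M_1,M_2$, and oddness of $A$ holds automatically. Now $A(T_2V)=(T_2V)_1M_1+(T_2V)_2M_2=(\ct\,p+\st\,q)M_1+(-\st\,p+\ct\,q)M_2$, so comparing the coefficients of $p$ and of $q$ separately on the two sides of \eqref{eq:rotational-invariance-block-matrix} shows that the rotational invariance is equivalent to the pair of matrix identities
\begin{equation*}
T_2^{-1}\brac{\ct\,M_1-\st\,M_2}T_2=\ct\,M_1+\st\,N_1,\qquad
T_2^{-1}\brac{\st\,M_1+\ct\,M_2}T_2=\ct\,M_2+\st\,N_2,
\end{equation*}
required to hold for every $\theta$. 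Each side is a matrix-valued trigonometric polynomial of degree two in $\theta$; equating the $\st^2$ and $\st\,\ct$ modes turns this into finitely many scalar linear equations on the entries of $M_1,M_2$.

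For Case~1 the hypothesis $a_{12}=a_{22}=0$ means $M_1,M_2$ have only a first column, i.e.\ four scalars $\mu_1,\nu_1,\mu_2,\nu_2$ with $a_{11}=\mu_1p+\nu_1q$ and $a_{21}=\mu_2p+\nu_2q$; computing $T_2^{-1}(\,\cdot\,)T_2$ for such a matrix is a single explicit $2\times2$ product, and matching the $\st^2$ and $\st\,\ct$ coefficients of the $(1,1)$ entry of the first identity forces $\mu_1=\nu_2$ and $\mu_2=-\nu_1$. Setting $c_1:=\mu_1=\nu_2$, $c_2:=\nu_1=-\mu_2$ gives exactly \eqref{eq:rotational-invariance-block-matrix-linear-case-1}, and substituting back into Lemma~\ref{lem:rotational-invariance-block-matrix-necessary-condition} produces the claimed $B(V)$; a short direct check then confirms that this pair $(A,B)$ satisfies both identities for all $\theta$. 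Case~2 ($A$ diagonal) runs the same way: for a diagonal $D$ the off-diagonal entries of $T_2^{-1}DT_2$ equal $\st\,\ct$ times the difference of the diagonal entries of $D$, while both right-hand sides stay diagonal, so the two diagonal entries of $\ct\,M_1-\st\,M_2$ must coincide for all $\theta$; this forces $\mu_1=\mu_2$ and $\nu_1=\nu_2$, yielding the stated forms with $c_1=\mu_1=\mu_2$ and $c_2=\nu_1=\nu_2$.

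For Case~3 I would not redo the computation but instead use a symmetry of the problem. Replacing $\theta$ by $\theta+\pi/2$ in \eqref{eq:rotational-invariance-block-matrix}, using the identity $T_2(\pi/2)^{-1}A(T_2(\pi/2)V)T_2(\pi/2)=B(V)$ extracted in the proof of Lemma~\ref{lem:rotational-invariance-block-matrix-necessary-condition}, and using that the rotations $T_2(\theta)$ commute, one checks that if $(A,B)$ is a rotationally invariant pair in the sense of Definition~\ref{def:rotational-invariance-block-matrix} then so is $(B,-A)$. When $A$ has the Case~3 form, the matrix $B$ furnished by Lemma~\ref{lem:rotational-invariance-block-matrix-necessary-condition} has the Case~1 form; applying the already-proved Case~1 to $(B,-A)$ and relabelling the two free constants gives precisely the asserted expressions for $A$ and for $B$. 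The mathematical content here is light, so I expect the main obstacle to be organizational: separating the coefficients of $p$ and $q$ before expanding in $\theta$, and recording $B$ through Lemma~\ref{lem:rotational-invariance-block-matrix-necessary-condition} from the outset, is what keeps the linear system small, and the observation that $(A,B)\mapsto(B,-A)$ preserves rotational invariance is what prevents Case~3 from doubling the work.
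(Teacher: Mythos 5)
Your argument is correct. For Cases~1 and~2 it is essentially the paper's computation in a slightly different packaging: the paper writes $a_{11}(V)=c_1p+c_2q$, $a_{21}(V)=c_3p+c_4q$ (etc.), substitutes directly into \eqref{eq:rotational-invariance-block-matrix}, and at the end separates the coefficients of $p$ and $q$ and of the trigonometric monomials to conclude $c_1=c_4$, $c_2=-c_3$; you instead parametrize $A(V)=pM_1+qM_2$ and split the coefficients of $p$ and $q$ \emph{before} expanding in $\theta$, which keeps the matrix algebra tidier but yields the identical linear system and identification $c_1=\mu_1=\nu_2$, $c_2=\nu_1=-\mu_2$. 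Where you genuinely depart from the paper is Case~3. The paper simply repeats the direct computation a third time, while you observe that if $(A,B)$ is rotationally invariant in the sense of Definition~\ref{def:rotational-invariance-block-matrix} then so is $(B,-A)$ — this is the $2\times2$ analogue of Lemma~\ref{lem:rotation-invariance-property-2}, obtainable by the shift $\theta\mapsto\theta+\pi/2$ and commutativity of the $T_2(\theta)$ — and that, under the Case~3 hypothesis, Lemma~\ref{lem:rotational-invariance-block-matrix-necessary-condition} puts $B$ in the Case~1 shape, so Case~1 applied to $(B,-A)$ delivers the result after relabelling the constants. This is a legitimate and appreciably shorter route: it avoids a third round of trigonometry at the cost of having the $(A,B)\mapsto(B,-A)$ symmetry available, which the paper in any case already establishes (in the full-system form) in Appendix~\ref{app:proof-rotation-invariance-system-unchanged}.
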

\begin{proof}
    See the proof in Appendix \ref{app:proof-lem-rotational-invariance-block-matrix-linear}.
\end{proof}

\begin{thm}\label{thm:rotational-invariance-block-matrix}
    Assume that the matrices $A(V)$ and $B(V)$ satisfy the rotational invariance of $2\times2$ matrices and they are linear functions of $V=(p,q)^T$. Then they must be of the form
    \begin{equation}
        A(V) = c_1
        \begin{pmatrix}
            p & 0 \\
            q & 0
        \end{pmatrix}
        + c_2
        \begin{pmatrix}
            q & 0 \\
            -p & 0
        \end{pmatrix}
        + c_3
        \begin{pmatrix}
            p & 0\\
            0 & p
        \end{pmatrix}
        + c_4
        \begin{pmatrix}
            q & 0\\
            0 & q
        \end{pmatrix}        
        + c_5
        \begin{pmatrix}
            0 & p \\
            0 & q
        \end{pmatrix}
        + c_6
        \begin{pmatrix}
            0 & q \\
            0 & -p
        \end{pmatrix},
    \end{equation}
    and
    \begin{equation}
        B(V) = 
        c_1
        \begin{pmatrix}
            0 & p \\
            0 & q
        \end{pmatrix}
        +
        c_2
        \begin{pmatrix}
            0 & q \\
            0 & -p
        \end{pmatrix}
        +
        c_3
        \begin{pmatrix}
            q & 0\\
            0 & q
        \end{pmatrix}
        -c_4
        \begin{pmatrix}
            p & 0\\
            0 & p
        \end{pmatrix}
        - c_5
        \begin{pmatrix}
            p & 0 \\
            q & 0
        \end{pmatrix}
        + c_6
        \begin{pmatrix}
            -q & 0 \\
            p & 0
        \end{pmatrix},
    \end{equation}
    where $c_i\in\mathbb{R}$ for $1\le i\le 6$.
\end{thm}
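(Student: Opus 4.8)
The plan is to bootstrap from the two preceding lemmas rather than compute directly. Since condition \eqref{eq:rotational-invariance-block-matrix} is linear in the pair $(A,B)$ and Lemma~\ref{lem:rotational-invariance-block-matrix-necessary-condition} shows that $B(V)$ is determined from $A(V)$ by an explicit linear entrywise formula, the admissible pairs are parametrized by a linear subspace $\mathcal{A}$ sitting inside the $8$-dimensional space of linear $2\times2$-matrix-valued functions of $V=(p,q)^T$. It therefore suffices to prove that every $A\in\mathcal{A}$ is a linear combination of the six matrices $G_1,\dots,G_6$ appearing, in order, in the statement, so that $A=\sum_{i=1}^{6}c_iG_i$, and then to read off $B(V)$ from Lemma~\ref{lem:rotational-invariance-block-matrix-necessary-condition}. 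Here $G_1=\begin{pmatrix}p&0\\q&0\end{pmatrix}$ and $G_2=\begin{pmatrix}q&0\\-p&0\end{pmatrix}$ are supported in the first column, $G_3=pI$ and $G_4=qI$ are diagonal, and $G_5=\begin{pmatrix}0&p\\0&q\end{pmatrix}$ and $G_6=\begin{pmatrix}0&q\\0&-p\end{pmatrix}$ are supported in the second column; each of them lies in $\mathcal{A}$, which one checks directly or simply extracts from the proof of Lemma~\ref{lem:rotational-invariance-block-matrix-linear}, whose three cases were obtained precisely as the general solutions of \eqref{eq:rotational-invariance-block-matrix} on the three elementary supports.

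The reduction then proceeds by successively clearing entries of $A$ using generators that leave already-cleared entries untouched. Given $A\in\mathcal{A}$, write its $(1,2)$-entry as $a_{12}(V)=c_5p+c_6q$ and set $A':=A-c_5G_5-c_6G_6\in\mathcal{A}$; since $G_5,G_6$ are supported in the second column, $A'$ has vanishing $(1,2)$-entry. Next write the $(2,2)$-entry of $A'$ as $c_3p+c_4q$ and set $A'':=A'-c_3G_3-c_4G_4\in\mathcal{A}$; because $G_3,G_4$ are diagonal, $A''$ still has vanishing $(1,2)$-entry and now also vanishing $(2,2)$-entry, i.e.\ $A''$ is supported in the first column. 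Case~1 of Lemma~\ref{lem:rotational-invariance-block-matrix-linear} then forces $A''=c_1G_1+c_2G_2$ for some $c_1,c_2\in\mathbb{R}$. Adding the subtracted terms back gives $A=\sum_{i=1}^{6}c_iG_i$, which is the asserted form, and applying the entrywise formula of Lemma~\ref{lem:rotational-invariance-block-matrix-necessary-condition} to each $G_i$ (equivalently, reading off the $B$-part from each of the three cases of Lemma~\ref{lem:rotational-invariance-block-matrix-linear}) produces exactly the six terms displayed for $B(V)$. Linear independence of $G_1,\dots,G_6$ is immediate by inspection, so in fact $\dim\mathcal{A}=6$ and the $c_i$ are unique.

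Given the two lemmas, this is essentially bookkeeping and there is no serious obstacle; the one point requiring care is the order of elimination — clear the second column first, then the remaining diagonal entry, and only then invoke the first-column classification — so that no step disturbs a previously zeroed entry, together with the reminder that one must use membership of the explicit generators $G_i$ in $\mathcal{A}$ (not merely the necessary conditions of Lemma~\ref{lem:rotational-invariance-block-matrix-linear}) when asserting $A',A''\in\mathcal{A}$. A more computational alternative — writing $A(V)=pM_1+qM_2$, expanding both sides of \eqref{eq:rotational-invariance-block-matrix} in powers of $\cos\theta$ and $\sin\theta$, and solving the resulting linear system for $M_1,M_2$ — also works but is messier and far less transparent.
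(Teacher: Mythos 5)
Your proof is correct and follows essentially the same strategy as the paper's: use Lemma~\ref{lem:rotational-invariance-block-matrix-necessary-condition} to reduce to classifying $A(V)$ alone, then successively peel off pieces matching the three cases of Lemma~\ref{lem:rotational-invariance-block-matrix-linear}. The only (cosmetic) difference is the elimination order — you clear the second column, then the diagonal, then invoke Case~1 for the first column, whereas the paper clears the first column via Case~1 first and ends with Case~3 for the second column — and you correctly flag the one point both arguments implicitly rely on, namely sufficiency of the forms in Lemma~\ref{lem:rotational-invariance-block-matrix-linear} (so that membership in $\mathcal{A}$ is preserved under subtraction).
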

\begin{proof}
    Since the matrix $B(V)$ is uniquely determined by $A(V)$ by Lemma  \ref{lem:rotational-invariance-block-matrix-necessary-condition}, we only need to consider the form of $A(V)$.

    For any matrix $A(V)$ of the form
    \begin{equation*}
        A(V) = 
        \begin{pmatrix}
            a_{11}(V) & a_{12}(V) \\
            a_{21}(V) & a_{22}(V)
        \end{pmatrix},
    \end{equation*}
    we can decompose it as
    \begin{equation*}
        A(V) = 
        \begin{pmatrix}
            \tilde{a}_{21}(V) & 0 \\
            a_{21}(V) & 0
        \end{pmatrix}
        +
        \begin{pmatrix}
            a_{11}(V) - \tilde{a}_{21}(V) & a_{12}(V) \\
            0 & a_{22}(V)
        \end{pmatrix},
    \end{equation*}
    where $\tilde{a}_{21}(V)$ is a linear function  such that 
    \begin{equation*}
        P(V) := 
        \begin{pmatrix}
            \tilde{a}_{21}(V) & 0 \\
            a_{21}(V) & 0
        \end{pmatrix}
    \end{equation*}
    satisfies the form of rotational invariance in \eqref{eq:rotational-invariance-block-matrix-linear-case-1} given in Case 1 in Lemma \ref{lem:rotational-invariance-block-matrix-linear}.  {Note that in \eqref{eq:rotational-invariance-block-matrix-linear-case-1} there are only two coefficients $c_1$ and $c_2$ which will be uniquely determined by $a_{21}(V)$.}
    
    Since $A(V)$ and $P(V)$ both satisfy the rotational invariance, we have that the remaining part
    \begin{equation*}
        Q(V) := 
        \begin{pmatrix}
            a_{11}(V) - \tilde{a}_{21}(V) & a_{12}(V) \\
            0 & a_{22}(V)
        \end{pmatrix}
    \end{equation*}
    also satisfies the rotational invariance. Next, we decompose $Q(V)$ into two parts:
    \begin{equation}\label{eq:Q-decompose}
        Q(V) = 
        \begin{pmatrix}
            a_{11}(V) - \tilde{a}_{21}(V) & 0 \\
            0 & \tilde{a}_{22}(V)
        \end{pmatrix}
        +
        \begin{pmatrix}
            0 & a_{12}(V) \\
            0 & {a}_{22}(V) - \tilde{a}_{22}(V)
        \end{pmatrix}
    \end{equation}
    where $\tilde{a}_{22}(V)$ is a linear function uniquely determined by $(a_{11}(V)-\tilde{a}_{21}(V))$ such that 
    \begin{equation}
        R(V) := 
        \begin{pmatrix}
            a_{11}(V) - \tilde{a}_{21}(V) & 0 \\
            0 & \tilde{a}_{22}(V)
        \end{pmatrix}        
    \end{equation}    
    satisfies the rotational invariance given in Case 2 in Lemma \ref{lem:rotational-invariance-block-matrix-linear}.
    
    Lastly, the second part in \eqref{eq:Q-decompose}
    \begin{equation}
        S(V) := 
        \begin{pmatrix}
            0 & a_{12}(V) \\
            0 & {a}_{22}(V) - \tilde{a}_{22}(V)
        \end{pmatrix}
    \end{equation}
    must satisfy the rotational invariance and falls into Case 3 in Lemma \ref{lem:rotational-invariance-block-matrix-linear}. The proof is completed.

\end{proof}

Motivated by the constraint given in Theorem \ref{thm:rotational-invariance-block-matrix}, we can modify the coefficient matrices in the HSWME \eqref{eq:HSWME} in the following way to make it satisfy the rotational invariance property.
\begin{thm}[general closure relation with rotational invariance]\label{thm:general-closure-relation-rotational-invariance}
    Suppose that the matrices $A_{ij}(U)\in\mathbb{R}^{2\times2}$ and $B_{ij}(U)\in\mathbb{R}^{2\times2}$ satisfy the rotational invariance for the $2\times2$ matrices for $1\le i,j\le N+1$.
    Then the matrices $A$ and $B$ given by
    \begin{equation}\label{eq:coefficient-matrix-general-closure-x}
        A = A_H + 
        \begin{aligned}
            \begin{pmatrix}
            0 \\
            & A_{11} & A_{12} & \cdots & A_{1,N+1} \\
            & A_{11} & A_{12} & \cdots & A_{1,N+1} \\
            & \vdots & \vdots & &\vdots \\
            & A_{N+1,1} & A_{N+1,2} & \cdots & A_{N+1,N+1} \\
            \end{pmatrix}
        \end{aligned}   
    \end{equation}
    in the $x$ direction and
    \begin{equation}\label{eq:coefficient-matrix-general-closure-y}
        B = B_H + 
        \begin{aligned}
            \begin{pmatrix}
            0 \\
            & B_{11} & B_{12} & \cdots & B_{1,N+1} \\
            & B_{11} & B_{12} & \cdots & B_{1,N+1} \\
            & \vdots & \vdots & &\vdots \\
            & B_{N+1,1} & B_{N+1,2} & \cdots & B_{N+1,N+1} \\
            \end{pmatrix}
        \end{aligned}
    \end{equation}
    in the $y$ direction, satisfy the rotational invariance. Here $A_H$ and $B_H$ are the coefficient matrices in the HSWME \eqref{eq:HSWME}.
\end{thm}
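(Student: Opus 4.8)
To prove Theorem \ref{thm:general-closure-relation-rotational-invariance}, the plan is to reduce it to two facts already available: the rotational invariance of the HSWME matrices (Theorem \ref{thm:rotation-invariance-HSWME}) and the rotational invariance of the $2\times2$ closure blocks (Definition \ref{def:rotational-invariance-block-matrix}, which is the hypothesis). The first observation I would make is that the relation \eqref{eq:rotation-invariance} is additive in the pair of coefficient matrices: if two pairs $(\widehat A,\widehat B)$ and $(\widetilde A,\widetilde B)$ each satisfy $\cos\theta\, M(U)+\sin\theta\, N(U)=T^{-1}M(TU)T$, then so does their sum, because $T^{-1}$ and $T$ enter linearly and the argument $TU$ is common to both summands. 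Since $(A_H,B_H)$ satisfies \eqref{eq:rotation-invariance} by Theorem \ref{thm:rotation-invariance-HSWME}, it suffices to prove \eqref{eq:rotation-invariance} for the correction pair $(\widetilde A,\widetilde B)$, i.e. for the block matrices appearing in \eqref{eq:coefficient-matrix-general-closure-x} and \eqref{eq:coefficient-matrix-general-closure-y}, which carry the blocks $A_{ij}$ (respectively $B_{ij}$) in their block positions and zeros in the scalar water-height row and column.

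Next I would compute $T^{-1}\widetilde A(TU)\,T$ directly. Because $T=\diag(1,T_2,\dots,T_2)$ from \eqref{eq:rotation-matrix-T} is block diagonal with the scalar $1$ in the water-height slot, conjugation by $T$ acts block-entrywise and preserves the block layout: the scalar row and column stay zero, and the $(i,j)$ vector block of $T^{-1}\widetilde A(TU)\,T$ equals $T_2^{-1}A_{ij}(TU)\,T_2$. The crucial bookkeeping step is then to recognize that, under the convention recorded after Definition \ref{def:rotational-invariance-block-matrix}, each block $A_{ij}$ is a function of a single pair $V_{ij}\in\{(u_m,v_m)\}\cup\{(\alpha_k,\beta_k):1\le k\le N\}$, and that the substitution $U\mapsto TU$ transforms exactly that pair by $V_{ij}\mapsto T_2 V_{ij}$ — this is precisely how $T$ is built in \eqref{eq:rotation-matrix-T}. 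Hence $A_{ij}(TU)=A_{ij}(T_2 V_{ij})$, and the hypothesis gives
\begin{equation*}
    T_2^{-1}A_{ij}(T_2 V_{ij})\,T_2 \;=\; \cos\theta\,A_{ij}(V_{ij})+\sin\theta\,B_{ij}(V_{ij}) \;=\; \cos\theta\,A_{ij}(U)+\sin\theta\,B_{ij}(U).
\end{equation*}
Since $\widetilde B$ carries $B_{ij}$ in exactly the block position where $\widetilde A$ carries $A_{ij}$ (the two correction matrices have parallel structure by construction), reassembling these block identities over all $(i,j)$ yields $T^{-1}\widetilde A(TU)\,T=\cos\theta\,\widetilde A(U)+\sin\theta\,\widetilde B(U)$; adding the HSWME part back then gives \eqref{eq:rotation-invariance} for $A=A_H+\widetilde A$ and $B=B_H+\widetilde B$. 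This argument mirrors the block-diagonal conjugation already carried out in the alternative proof of Theorem \ref{thm:rotation-invariance-HSWME}.

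I expect the only delicate point — more a matter of care than a genuine obstacle — to be the middle step: ensuring that $A_{ij}(TU)$ legitimately collapses to $A_{ij}(T_2 V_{ij})$, i.e. that each closure block depends on $U$ through just one rotated pair, so that Definition \ref{def:rotational-invariance-block-matrix}, which is phrased for a single argument $V\in\mathbb{R}^2$, actually applies. If one wished to allow a block to depend on several pairs simultaneously, one would either extend Definition \ref{def:rotational-invariance-block-matrix} to multiple arguments or split the block additively into single-pair pieces and invoke the additivity of \eqref{eq:rotation-invariance} once more; for the closures considered here the single-pair convention suffices, and the rest is routine linear bookkeeping with the block-diagonal rotation matrix.
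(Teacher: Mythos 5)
Your argument is correct and follows essentially the same block-conjugation strategy that the paper invokes by referring back to the alternative proof of Theorem \ref{thm:rotation-invariance-HSWME}: you split off the HSWME part, use linearity of the relation \eqref{eq:rotation-invariance} in $(A,B)$, and observe that conjugation by the block-diagonal $T$ acts blockwise so that the hypothesis on the $2\times2$ blocks applies. Your remark about the hypothesis needing each $A_{ij}$ to depend on a single rotated pair $V_{ij}$ (so that $A_{ij}(TU)=A_{ij}(T_2V_{ij})$ and Definition \ref{def:rotational-invariance-block-matrix} is literally applicable) is a reasonable and careful reading of what the paper leaves implicit.
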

\begin{proof}
    The proof is similar to the alternative proof of Theorem \ref{thm:rotation-invariance-HSWME} in Section \ref{subsec:alternative-proof-rotation-invariance}.
\end{proof}

\begin{rem}
From Theorem \ref{thm:general-closure-relation-rotational-invariance}, to preserve the rotational invariance of the moment model, we can modify any entries except the first row and the first column, as long as the sub-blocks satisfy the rotational invariance for the $2\times2$ matrices. 
\end{rem}

\section{Analysis of the hyperbolicity}\label{sec:hyperbolic}

In this section, we analyze the hyperbolicity of the moment models. With the aid of the rotational invariance, the hyperbolicity in 2D is equivalent to the hyperbolicity in the $x$ direction. Therefore, it suffices to only analyze the real diagonalizability of the coefficient matrix in $x$ direction. 

In section \ref{subsec:hyperbolicity-HSWME}, we start from the analysis of the hyperbolicity of the HSWME \eqref{eq:HSWME}. We find that the HSWME \eqref{eq:HSWME} is hyperbolic if $\alpha_1\ne0$ or $\alpha_1=\beta_1=0$, and weakly hyperbolic if $\alpha_1=0$ and $\beta_1\ne0$. To fix this weakly hyperbolicity, we propose a new globally hyperbolic model, see section \ref{subsec:globally-hyperbolic-HSWME}. Then we generalize the $\beta$-HSMWE in 1D proposed in \cite{koellermeier2020analysis} to 2D and analyze its hyperbolicity in section \ref{subsec:hyperbolicity-beta-HSWME}. Lastly, we propose the general framework for constructing provable hyperbolic moment models with specified propogation speeds in section \ref{subsec:framework-general-closure} and give a concrete example in section \ref{subsec:example-general-closure}.

\subsection{Hyperbolicity of the HSWME}\label{subsec:hyperbolicity-HSWME}

In this part, we will prove the hyperbolicity of the HSWME \eqref{eq:HSWME} in 2D. This reduces to check the real diagonalizability of the coefficient matrix $A_H$ \eqref{eq:coefficient-matrix-HSMWE-x} in the $x$ direction.

{Note that the characteristic polynomial of $A_H$ was analyzed in Theorem 4.3.3 in \cite{verbiest2022}. However, the proof in \cite{verbiest2022} only shows that the eigenvalues of $A_H$ are real but not necessarily distinct. Therefore, the proof is incomplete since the real diagonalizability requires not only the real eigenvalues but also a complete set of eigenvectors.}
In this part, we will prove the real diagonalizability of $A_H$ with the aid of the associated polynomial sequence and show that the eigenvalues are related to the Gauss-Lobatto and Gauss-Legendre quadrature points.

To analyze the hyperbolicity, we use another ordering of variables:
\begin{equation}\label{eq:different-order-variable}
    W = (h, \,  {hu_m}, \, h\alpha_1, \, \cdots, \, h\alpha_N, \,  {hv_m}, \, h\beta_1, \, \cdots, \, h\beta_N)^T.
\end{equation}
Note that using different order of variables will not change the rotational invariance or the hyperbolicity of the model, but it does simplify the analysis.

Using this set of variables \eqref{eq:different-order-variable}, the coefficient matrix \eqref{eq:coefficient-matrix-HSMWE-x} in the $x$ direction in HSWME \eqref{eq:HSWME} can be written as
\begin{equation}\label{eq:matrix-A-another-order-variable}
    \tilde{A}_H(W) = 
\begin{pmatrix}
    \tilde{A}_{11}(W) & 0 \\
    \tilde{A}_{21}(W) & \tilde{A}_{22}(W) \\
\end{pmatrix},
\end{equation}
where the block matrices $\tilde{A}_{11}(W)\in\mathbb{R}^{(N+2)\times(N+2)}$, $\tilde{A}_{21}(W)\in\mathbb{R}^{(N+1)\times(N+2)}$ and $\tilde{A}_{22}(W)\in\mathbb{R}^{(N+1)\times(N+1)}$ are given by
\begin{equation}\label{eq:coefficient-matrix-HSWME-A11}
    \tilde{A}_{11}(W) = 
    \begin{pmatrix}
    0 & 1 \\
    gh -  {u_m^2} - \frac{1}{3}\alpha_1^2 & 2 {u_m} & \frac{2}{3}\alpha_1 \\
    -2 {u_m}\alpha_1 & 2\alpha_1 &  {u_m} & \frac{3}{5}\alpha_1 \\
    -\frac{2}{3}\alpha_1^2 & 0 & \frac{1}{3}\alpha_1 &  {u_m} & \frac{4}{7}\alpha_1 \\
    & & & \ddots & \ddots & \ddots \\
    & & & & \frac{N-2}{2N-3}\alpha_1 &  {u_m} & \frac{N+1}{2N+1}\alpha_1 \\
    & & & & & \frac{N-1}{2N-1}\alpha_1 &  {u_m}
    \end{pmatrix},
\end{equation}
\begin{equation}\label{eq:coefficient-matrix-HSWME-A21}
    \tilde{A}_{21}(W) = 
    \begin{pmatrix}
        - {u_m} {v_m}-\frac{\alpha_1\beta_1}{3} &  {v_m} & \frac{\beta_1}{3} \\
        -( {u_m}\beta_1+ {v_m}\alpha_1) & \beta_1 & 0 & \frac{1}{5}\beta_1 \\
        -\frac{2}{3}\alpha_1\beta_1 & 0 & -\frac{1}{3}\beta_1 & 0 & \frac{1}{7}\beta_1 \\
        & & & \ddots & \ddots & \ddots \\
        & & & & -\frac{1}{2N-3}\beta_1 & 0 & \frac{1}{2N+1}\beta_1 \\
        & & & & & -\frac{1}{2N-1}\beta_1 & 0 
    \end{pmatrix},
\end{equation}
and
\begin{equation}\label{eq:coefficient-matrix-HSWME-A22}
    \tilde{A}_{22}(W) =
    \begin{pmatrix}
         {u_m} & \frac{\alpha_1}{3} \\
        \alpha_1 &  {u_m} & \frac{2\alpha_1}{5} \\
        & \frac{2}{3}\alpha_1 &  {u_m} & \frac{3\alpha_1}{7} \\
        & & \ddots & \ddots & \ddots \\
        & & & \frac{N-1}{2N-3}\alpha_1 &  {u_m} & \frac{N}{2N+1}\alpha_1 \\
        & & & & \frac{N}{2N-1}\alpha_1 &  {u_m}
    \end{pmatrix}.
\end{equation}
Note that $\tilde{A}_{11}$ in \eqref{eq:coefficient-matrix-HSWME-A11} describes the effects of the variables in the $x$ directions on the time evolution of those variables and, therefore, {it is the same as the coefficient matrix in the HSWME model in 1D proposed in \cite{koellermeier2020analysis}}.
Next, we will analyze the eigenstructures of $\tilde{A}_{H}(W)$ with two different cases of $\alpha_1=0$ and $\alpha_1\neq0$.

\subsubsection{The case of $\alpha_1=0$}\label{subsubsec:alpha1=0}

 {
We first consider the case of $\alpha_1=0$. In this case, the matrices $\tilde{A}_{11}(W)$, $\tilde{A}_{21}(W)$ and $\tilde{A}_{22}(W)$ in \eqref{eq:coefficient-matrix-HSWME-A11}-\eqref{eq:coefficient-matrix-HSWME-A21}-\eqref{eq:coefficient-matrix-HSWME-A22} reduce to
\begin{equation*}\label{eq:coefficient-matrix-HSWME-A11-alpha1=0}
    \tilde{A}_{11}(W_0) = 
    \begin{pmatrix}
    0 & 1 \\
    gh - 
    {u_m^2} & 2
    {u_m} \\
     &  & 
     {u_m}I_{N}
    \end{pmatrix},
\end{equation*}
\begin{equation*}\label{eq:coefficient-matrix-HSWME-A21-alpha1=0}
    \tilde{A}_{21}(W_0) = 
    \begin{pmatrix}
        -{u_m}{v_m} & {v_m} & \frac{\beta_1}{3} \\
        -{u_m}\beta_1 & \beta_1 & 0 & \frac{1}{5}\beta_1 \\
        0 & 0 & -\frac{1}{3}\beta_1 & 0 & \frac{1}{7}\beta_1 \\
        & & & \ddots & \ddots & \ddots \\
        & & & & -\frac{1}{2N-3}\beta_1 & 0 & \frac{1}{2N+1}\beta_1 \\
        & & & & & -\frac{1}{2N-1}\beta_1 & 0 
    \end{pmatrix},
\end{equation*}
and
\begin{equation*}\label{eq:coefficient-matrix-HSWME-A22-alpha1=0}
    \tilde{A}_{22}(W_0) = {u_m}I_{N+1}.
\end{equation*}
where $W_0 := W|_{\alpha_1=0}$.
It is straightforward to compute the characteristic polynomial of $\tilde{A}_{11}(W_0)$ and $\tilde{A}_{22}(W_0)$:
\begin{equation*}\label{eq:characteristic-polynomial-A11-alpha1=0}
\begin{aligned}            
    \det(\lambda I_{N+2} - \tilde{A}_{11}(W_0)) ={}& \det
    \begin{pmatrix}
        \lambda & -1 \\
        -gh+ {u_m^2} & \lambda - 2{u_m} \\
         &  & (\lambda - {u_m})I_{N}
    \end{pmatrix} \\
    ={}& \brac{\lambda(\lambda - 2u_m) - (gh - u_m^2)} (\lambda-u_m)^N, \\
    ={}& \brac{(\lambda-u_m)^2 - gh} (\lambda-u_m)^N,
\end{aligned}
\end{equation*}
and
\begin{equation*}\label{eq:characteristic-polynomial-A22-alpha1=0}
    \det(\lambda I_{N+1} - \tilde{A}_{22}(W_0)) = (\lambda - u_m)^{N+1}.
\end{equation*}
Therefore, the characteristic polynomial of $\tilde{A}_H(W_0)$ is
\begin{equation*}\label{eq:characteristic-polynomial-A-alpha1=0}
    \det(\lambda I_{2N+3} - \tilde{A}_H(W_0)) = \det(\lambda I_{N+2} - \tilde{A}_{11}(W_0))\det(\lambda I_{N+1} - \tilde{A}_{22}(W_0)) = \brac{(\lambda-u_m)^2 - gh} (\lambda-u_m)^{2N+1}.
\end{equation*}
Thus, the eigenvalues of $\tilde{A}_H(W_0)$ are
\begin{equation}\label{eq:eigenvalue-A-alpha1=0}
    \lambda_{1,2} = u_m \pm \sqrt{gh}, \quad \lambda_{i} = u_m, \, i=3,4,\cdots,2N+3.
\end{equation}

Next, we proceed to analyze the eigenvectors of $\tilde{A}_H(W_0)$ for the repeated eigenvalue $u_m$. We consider the linear system 
\begin{equation*}
    (u_m I_{2N+3} - \tilde{A}_H(W_0)) {q} = 0,
\end{equation*}
where the eigenvector $q=(q_1^T, q_2^T)^T\in\mathbb{R}^{2N+3}$ with $q_1=(q_{1,1},q_{1,2},\cdots,q_{1,N+2})^T\in\mathbb{R}^{N+2}$ and $q_2=(q_{2,1}, q_{2,2}, \cdots, q_{2,N+1})^T\in\mathbb{R}^{N+1}$. The above system can be  reduced to two subsystems:
\begin{equation}
\label{eq:linear-system-eigenvector-alpha1=0}
\begin{aligned}
    (u_m I_{N+2} - \tilde{A}_{11}(W_0)) q_1 ={}& 0, \\
    - \tilde{A}_{21}(W_0) q_1 + (u_m I_{N+1} - \tilde{A}_{22}(W_0)) q_2 ={}& 0,
\end{aligned}    
\end{equation}
For the first equation in \eqref{eq:linear-system-eigenvector-alpha1=0}, we have
\begin{equation*}\label{eq:linear-system-eigenvector-alpha1=0-1}
    \begin{pmatrix}
        u_m & -1 \\
        -gh+u_m^2 & -u_m \\
        & & 0 \\
        & & & \ddots \\
        & & & & 0
    \end{pmatrix}
    \begin{pmatrix}
        q_{1,1} \\
        q_{1,2} \\
        q_{1,3} \\
        \vdots \\
        q_{1,N+2}
    \end{pmatrix}
    = 0,
\end{equation*}
from which we solve out
\begin{equation}\label{eq:linear-system-eigenvector-alpha1=0-1-solution}
    q_{1,1} = q_{1,2} = 0,
\end{equation}
by using the fact that $gh>0$.
For the second equation in \eqref{eq:linear-system-eigenvector-alpha1=0}, we have
\begin{equation*}\label{eq:linear-system-eigenvector-alpha1=0-2}
    \begin{pmatrix}
        -{u_m}{v_m} & {v_m} & \frac{\beta_1}{3} \\
        -{u_m}\beta_1 & \beta_1 & 0 & \frac{1}{5}\beta_1 \\
        0 & 0 & -\frac{1}{3}\beta_1 & 0 & \frac{1}{7}\beta_1 \\
        & & & \ddots & \ddots & \ddots \\
        & & & & -\frac{1}{2N-3}\beta_1 & 0 & \frac{1}{2N+1}\beta_1 \\
        & & & & & -\frac{1}{2N-1}\beta_1 & 0 
    \end{pmatrix}
    \begin{pmatrix}
        q_{1,1} \\
        q_{1,2} \\
        q_{1,3} \\
        \vdots \\
        q_{1,N+1}
        \\
        q_{1,N+2}
    \end{pmatrix}
    = 0.
\end{equation*}
which is equivalent to
\begin{equation}
\label{eq:linear-system-eigenvector-alpha1=0-2-reduced}
\begin{aligned}
    -{u_m}{v_m} q_{1,1} + {v_m} q_{1,2} + \frac{\beta_1}{3} q_{1,3} ={}& 0, \\
    -{u_m}\beta_1 q_{1,1} + \beta_1 q_{1,2} + \frac{1}{5}\beta_1 q_{1,4} ={}& 0, \\
    -\frac{1}{3}\beta_1 q_{1,3} + \frac{1}{7}\beta_1 q_{1,5} ={}& 0, \\
    \vdots \\
    -\frac{1}{2N-3}\beta_1 q_{1,N} + \frac{1}{2N+1}\beta_1 q_{1,N+2} ={}& 0, \\
    -\frac{1}{2N-1}\beta_1 q_{1,N+1} ={}& 0.
\end{aligned}    
\end{equation}
Using the relation \eqref{eq:linear-system-eigenvector-alpha1=0-1-solution}, it is easy to obtain the solution to \eqref{eq:linear-system-eigenvector-alpha1=0-2-reduced}: 
\begin{enumerate}
    \item If $\beta_1\ne0$, then we have $q_{1,3} = \cdots = q_{1,N+2} = 0$.

    \item If $\beta_1=0$, then \eqref{eq:linear-system-eigenvector-alpha1=0-2-reduced} is automatically satisfied.
\end{enumerate}
We summarize the above result in the following theorem:
\begin{thm}[eigenstructure of $\tilde{A}_H(W)$ in the case of $\alpha_1=0$]\label{thm:eigenvector-alpha1=0}
    Suppose that $\alpha_1=0$. Then the eigenvalues of $\tilde{A}_H(W)$ in \eqref{eq:matrix-A-another-order-variable} are real and they are given by
    \begin{equation}
        \lambda_{1,2} = u_m \pm \sqrt{gh}, \quad \lambda_{i} = u_m, \, i=3,4,\cdots,2N+3.
    \end{equation}
    Furthermore, for the eigenspace of the eigenvalue $u_m$, we have
    \begin{enumerate}
        \item If $\beta_1=0$, then the eigenspace of the eigenvalue $u_m$ is
        \begin{equation*}
            E = \{q=(q_{1,1}, q_{1,2}, \cdots, q_{1,N+2}, q_{2,1}, q_{2,2}, \cdots, q_{2,N+1})^T\in\mathbb{R}^{2N+3}: q_{1,1}=q_{1,2}=0 \}
            ,
        \end{equation*}
        with $\textrm{dim}(E) = 2N+1$. Thus the matrix $\tilde{A}_H(W)$ is real diagonalizable.
        
        \item If $\beta_1\ne0$, then the eigenspace of the eigenvalue $u_m$ is
        \begin{equation*}
            E = \{q=(q_{1,1}, q_{1,2}, \cdots, q_{1,N+2}, q_{2,1}, q_{2,2}, \cdots, q_{2,N+1})^T\in\mathbb{R}^{2N+3}: q_{1,i}=0, 1\le i\le N+2 \}
            ,
        \end{equation*}
        with $\textrm{dim}(E) = N+1$.
        Thus, the matrix $\tilde{A}_H(W)$ is not real diagonalizable.
    \end{enumerate}
\end{thm}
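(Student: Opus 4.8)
The plan is to exploit the block lower-triangular structure of $\tilde{A}_H(W)$ in the reordered variables \eqref{eq:different-order-variable}, which collapses dramatically once $\alpha_1=0$. First I would substitute $\alpha_1=0$ into \eqref{eq:coefficient-matrix-HSWME-A11}--\eqref{eq:coefficient-matrix-HSWME-A22}: the block $\tilde{A}_{22}(W_0)$ becomes $u_m I_{N+1}$, the block $\tilde{A}_{11}(W_0)$ decouples into the scalar shallow-water $2\times2$ block $\begin{pmatrix}0 & 1 \\ gh-u_m^2 & 2u_m\end{pmatrix}$ directly summed with $u_m I_N$, and $\tilde{A}_{21}(W_0)$ becomes a sparse coupling block all of whose nonzero entries carry a factor of $\beta_1$ except for the two entries $-u_m v_m$ and $v_m$ in its first row. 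Since $\tilde{A}_H(W_0)$ is block lower-triangular, its characteristic polynomial factors as $\det(\lambda I-\tilde{A}_{11}(W_0))\,\det(\lambda I-\tilde{A}_{22}(W_0))$, and a direct computation gives $\bigl((\lambda-u_m)^2-gh\bigr)(\lambda-u_m)^{2N+1}$. Because $gh>0$, all roots are real, and one reads off that $u_m\pm\sqrt{gh}$ are simple eigenvalues while $u_m$ has algebraic multiplicity $2N+1$; this settles the eigenvalue part of the statement.

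For the eigenspace of $u_m$, I would solve $(u_m I_{2N+3}-\tilde{A}_H(W_0))q=0$ in the block form \eqref{eq:linear-system-eigenvector-alpha1=0} with $q=(q_1^T,q_2^T)^T$. The first block equation $(u_m I_{N+2}-\tilde{A}_{11}(W_0))q_1=0$ involves only $q_1$; its leading $2\times2$ block (namely $u_m I_2$ minus the shallow-water block) has determinant $-gh\neq0$, so it forces $q_{1,1}=q_{1,2}=0$, while the lower $u_m I_N$ portion contributes the zero matrix and leaves $q_{1,3},\dots,q_{1,N+2}$ unconstrained. Substituting $q_{1,1}=q_{1,2}=0$ into the second block equation, the term $(u_m I_{N+1}-\tilde{A}_{22}(W_0))q_2$ vanishes identically, so the remaining constraint is exactly $\tilde{A}_{21}(W_0)q_1=0$, which unwinds to the triangular chain of scalar relations \eqref{eq:linear-system-eigenvector-alpha1=0-2-reduced}.

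The case split is then immediate. If $\beta_1=0$, then the only possibly nonzero entries of $\tilde{A}_{21}(W_0)$ sit in its first two columns, so $\tilde{A}_{21}(W_0)q_1=0$ is automatically satisfied once $q_{1,1}=q_{1,2}=0$; hence $q_{1,3},\dots,q_{1,N+2}$ and all of $q_2\in\mathbb{R}^{N+1}$ are free, giving $\dim E=N+(N+1)=2N+1$, which matches the algebraic multiplicity of $u_m$ and, together with the two simple eigenvalues $u_m\pm\sqrt{gh}$, yields real diagonalizability. If $\beta_1\neq0$, the chain \eqref{eq:linear-system-eigenvector-alpha1=0-2-reduced} propagates downward and forces $q_{1,3}=\dots=q_{1,N+2}=0$, so only $q_2$ remains free and $\dim E=N+1<2N+1$, so the matrix fails to be real diagonalizable. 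I do not expect a genuine obstacle here: the argument is essentially careful bookkeeping of which components of $q_1$ survive each block equation. The one point demanding attention is the explicit use of $h>0$, which is what makes the leading shallow-water $2\times2$ block invertible and pins down $q_{1,1}=q_{1,2}=0$; without it the dimension count would change.
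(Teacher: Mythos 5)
Your proposal is correct and follows essentially the same route as the paper: substitute $\alpha_1=0$, factor the characteristic polynomial through the block lower-triangular structure to get $((\lambda-u_m)^2-gh)(\lambda-u_m)^{2N+1}$, then solve the eigenvector system blockwise using $gh>0$ to force $q_{1,1}=q_{1,2}=0$, observe that $\tilde{A}_{22}(W_0)=u_m I_{N+1}$ reduces the second block equation to $\tilde{A}_{21}(W_0)q_1=0$, and split on $\beta_1$ to read off the eigenspace dimension. The only cosmetic difference is that you describe the downward propagation of the chain qualitatively rather than writing out the scalar relations in full, but the content and dimension counts ($2N+1$ versus $N+1$) match the paper exactly.
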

This immediately implies the following theorem:
\begin{thm}[hyperbolicity of the HSWME in the case of $\alpha_1=0$]\label{thm:real-diagonalizable-alpha1=0}
    Suppose that $\alpha_1=0$. We have the following conclusion for the hyperbolicity of the HSWME \eqref{eq:HSWME} in 2D:
    \begin{enumerate}
        \item     
        If $\beta_1=0$, then the HSWME  \eqref{eq:HSWME}  is hyperbolic;
        
        \item 
        If $\beta_1\ne0$, then the HSWME  \eqref{eq:HSWME}  is weakly hyperbolic.
    \end{enumerate}
\end{thm}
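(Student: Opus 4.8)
The plan is to read this off directly from the eigenstructure computation in Theorem~\ref{thm:eigenvector-alpha1=0}, after first recording the standard reduction of 2D hyperbolicity to a 1D statement. First I would note that, by the rotational invariance of the HSWME in Theorem~\ref{thm:rotation-invariance-HSWME}, for every angle $\theta$ the matrix $\cos\theta\,A_H(U)+\sin\theta\,B_H(U)=T^{-1}A_H(TU)T$ is similar to $A_H(TU)$; hence $\cos\theta\,A_H(U)+\sin\theta\,B_H(U)$ is real diagonalizable for all $\theta$ and all states if and only if $A_H$ is, and likewise for the property ``all eigenvalues are real''. Consequently the HSWME is hyperbolic (resp. weakly hyperbolic) in 2D exactly when the $x$-direction matrix $A_H$ is real diagonalizable (resp. has only real eigenvalues). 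Since the reordering of unknowns in \eqref{eq:different-order-variable} is a fixed permutation similarity, this spectral question is unchanged if we work with $\tilde A_H(W)$ from \eqref{eq:matrix-A-another-order-variable} instead.

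Next I would invoke Theorem~\ref{thm:eigenvector-alpha1=0}. When $\alpha_1=0$ it tells us that the spectrum of $\tilde A_H(W)$ consists of the real numbers $\lambda_{1,2}=u_m\pm\sqrt{gh}$ together with $\lambda_i=u_m$ for $3\le i\le 2N+3$, so in all subcases the eigenvalues are real and the model is at least weakly hyperbolic. Real diagonalizability is then equivalent to the geometric multiplicities summing to $2N+3$. Since $h>0$ and $g>0$, the two values $u_m\pm\sqrt{gh}$ are distinct from each other and from $u_m$, so each is a simple eigenvalue with a one-dimensional eigenspace; the only remaining quantity is $\dim E$, the dimension of the eigenspace of $u_m$, whose algebraic multiplicity is $2N+1$.

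The two cases now follow. If $\beta_1=0$, Theorem~\ref{thm:eigenvector-alpha1=0} gives $\dim E=2N+1$, matching the algebraic multiplicity, so the geometric multiplicities total $1+1+(2N+1)=2N+3$; thus $\tilde A_H(W)$ is real diagonalizable and the HSWME is hyperbolic. If $\beta_1\ne0$, Theorem~\ref{thm:eigenvector-alpha1=0} gives $\dim E=N+1<2N+1$ (strict because $N\ge1$), so the geometric multiplicity of $u_m$ is strictly less than its algebraic multiplicity, $\tilde A_H(W)$ is not diagonalizable, and since its eigenvalues are nonetheless all real, the HSWME is weakly hyperbolic.

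I do not expect a genuine obstacle here: the statement is essentially a corollary of Theorem~\ref{thm:eigenvector-alpha1=0}. The only places warranting a careful sentence are the reduction from the 2D linear combination to the single matrix $A_H$ via rotational invariance (and the remark that the permutation of variables is harmless), and the bookkeeping that $u_m\pm\sqrt{gh}$ are simple, so that the dichotomy between hyperbolic and merely weakly hyperbolic is controlled entirely by whether $\dim E$ attains $2N+1$.
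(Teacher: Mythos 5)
Your proposal is correct and takes essentially the same route as the paper: the paper presents this theorem as an immediate corollary of Theorem~\ref{thm:eigenvector-alpha1=0}, and your write-up simply makes explicit the two ingredients the paper leaves tacit, namely the reduction of 2D hyperbolicity to real diagonalizability of $A_H$ via the similarity $\cos\theta\,A_H(U)+\sin\theta\,B_H(U)=T^{-1}A_H(TU)T$ (together with the harmless variable permutation), and the comparison of the geometric multiplicity $\dim E$ of $u_m$ against its algebraic multiplicity $2N+1$ in the two subcases.
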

}

\subsubsection{The case of $\alpha_1\ne0$}\label{subsubsec:alpha1-ne0}

In this part, we consider the case of $\alpha_1\ne0$. 
Notice that both $\tilde{A}_{11}(W)$ in \eqref{eq:coefficient-matrix-HSWME-A11} and $\tilde{A}_{22}(W)$ in \eqref{eq:coefficient-matrix-HSWME-A22} are lower Hessenberg matrices.
Before the discussion, we review important properties of the Hessenberg matrix. These properties facilitate directly relating the eigenvalues of a Hessenberg matrix to the roots of some associated polynomial. We start with the definitions of the (unreduced) lower Hessenberg matrix and the associated polynomial sequence \cite{elouafi2009recursion}:
\begin{defn}[lower Hessenberg matrix]\label{defn:hessenberg}
  The matrix $A = (a_{ij})_{n\times n}$ is called lower Hessenberg matrix if $a_{ij}=0$ for $j>i+1$. It is called unreduced lower Hessenberg matrix if further $a_{i,i+1}\ne0$ for $i=1,2,\cdots,n-1$.
\end{defn}
\begin{defn}[associated polynomial sequence \cite{elouafi2009recursion}]\label{defn:associated-polynomial}
  Let $A = (a_{ij})_{n\times n}$ be an unreduced lower Hessenberg matrix. The associated polynomial sequence $\{q_i\}_{0\le i\le n}$ is defined as follows:
  \begin{equation}\label{eq:recurrence-polynomial}
    \begin{aligned}
        q_0(x) &= 1, \\
    q_i(x) &= \frac{1}{a_{i,i+1}} \brac{x q_{i-1}(x) - \sum_{j=1}^i a_{ij} q_{j-1}(x)}, \quad 1\le i\le n,
    \end{aligned}
  \end{equation}
with $a_{n,n+1}:=1$.  
\end{defn}
\begin{thm}[\cite{elouafi2009recursion}]\label{thm:root-is-eigenvalue}
  Let $A = (a_{ij})_{n\times n}$ be an unreduced lower Hessenberg matrix and $\{q_i\}_{0\le i\le n}$ is the associated polynomial sequence with $A$. The following conclusions hold true:
  \begin{enumerate}
    \item 
    If $\lambda$ is a root of $q_n$, then $\lambda$ is an eigenvalue of the matrix $A$ and a corresponding eigenvector is $(q_0(\lambda), q_1(\lambda), \cdots, q_{n-1}(\lambda))^T$.
    
        \item 
  If all the roots of $q_n$ are simple, then the characteristic polynomial of $A$ is given by 
    \begin{equation*}
      \det(x I - A) = \rho q_n(x),
    \end{equation*}
        with $\rho=\Pi_{i=1}^{n-1} a_{i,i+1}$.
  \end{enumerate} 
\end{thm}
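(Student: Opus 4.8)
The plan is to distill from the defining recurrence \eqref{eq:recurrence-polynomial} a single vector identity linking the polynomial sequence to the action of $A$. Writing $q_i$ for $q_i(x)$ and clearing the denominator in \eqref{eq:recurrence-polynomial}, the relation for $1\le i\le n-1$ becomes $\sum_{j=1}^{i+1} a_{ij} q_{j-1} = x\, q_{i-1}$, while for $i=n$, using $a_{n,n+1}=1$, it becomes $\sum_{j=1}^{n} a_{nj} q_{j-1} = x\, q_{n-1} - q_n$. Setting $v(x) = (q_0(x), q_1(x), \dots, q_{n-1}(x))^T\in\mathbb{R}^n$ and using that $A$ is lower Hessenberg (so the $i$-th row of $A$ is supported in columns $1,\dots,i+1$), these two families of identities are exactly the row-by-row statements that
\begin{equation*}
  A\, v(x) = x\, v(x) - q_n(x)\, e_n, \qquad (xI - A)\, v(x) = q_n(x)\, e_n,
\end{equation*}
where $e_n$ is the $n$-th standard basis vector. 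First I would verify this identity one row at a time, handling the last row separately since that is the only row in which the new polynomial $q_n$ appears.

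Part 1 then follows immediately: if $\lambda$ is a root of $q_n$, the identity gives $(\lambda I - A)\, v(\lambda) = 0$, and $v(\lambda)\neq 0$ because its first component is $q_0(\lambda) = 1$. Hence $v(\lambda) = (q_0(\lambda), q_1(\lambda), \dots, q_{n-1}(\lambda))^T$ is an eigenvector of $A$ for the eigenvalue $\lambda$, which is the first claim.

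For Part 2, I would first track leading coefficients through \eqref{eq:recurrence-polynomial}: since the term $x\, q_{i-1}$ dominates and $\sum_{j=1}^{i} a_{ij} q_{j-1}$ has degree at most $i-1$, each $q_i$ has degree $i$ with leading coefficient $\bigl(\prod_{k=1}^{i} a_{k,k+1}\bigr)^{-1}$; in particular $q_n$ has degree $n$ with leading coefficient $\rho^{-1}$ because $a_{n,n+1}=1$. Thus $\rho\, q_n(x)$ is monic of degree $n$, exactly like $\det(xI-A)$. Under the hypothesis that $q_n$ has $n$ simple roots $\lambda_1,\dots,\lambda_n$, Part 1 shows that each $\lambda_k$ is an eigenvalue of $A$, hence a root of $\det(xI-A)$; two monic polynomials of degree $n$ sharing $n$ distinct roots must coincide, so $\det(xI-A) = \rho\, q_n(x)$.

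The argument is essentially mechanical once the identity $(xI-A)\, v(x) = q_n(x)\, e_n$ is established, so the only genuine care points are the row-by-row bookkeeping that produces this identity (especially the boundary row $i=n$) and the leading-coefficient computation for $q_n$. The one real limitation worth noting is that Part 2 as stated uses the simple-roots hypothesis through the eigenvalue-counting step; removing it would instead require a direct cofactor-expansion induction on the size of $A$ to obtain $\det(xI-A) = \rho\, q_n(x)$ as a polynomial identity.
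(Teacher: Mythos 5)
The paper does not give its own proof of this theorem---it is quoted directly with the citation to Elouafi and Hadj, so there is no in-paper argument to compare against. Your proof stands on its own and is correct. The central identity $(xI-A)v(x)=q_n(x)e_n$ is derived accurately: clearing the denominator in \eqref{eq:recurrence-polynomial} gives $\sum_{j=1}^{i+1}a_{ij}q_{j-1}(x)=x\,q_{i-1}(x)$ for $1\le i\le n-1$, which together with the lower-Hessenberg condition $a_{ij}=0$ for $j>i+1$ is exactly row $i$ of $Av(x)=x\,v(x)$, and the final row acquires the extra $-q_n(x)$ term because of the normalization $a_{n,n+1}=1$. Part 1 then follows since $v(\lambda)\ne0$ (its first entry is $q_0(\lambda)=1$). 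For Part 2 your leading-coefficient induction is right: each $q_i$ has degree $i$ with leading coefficient $\bigl(\prod_{k=1}^{i}a_{k,k+1}\bigr)^{-1}$, so $\rho\,q_n$ is monic of degree $n$, and once its $n$ simple (hence distinct) roots are recognized as eigenvalues by Part 1, the two monic degree-$n$ polynomials must coincide. Your closing observation is also apt: the theorem as stated leans on the simple-roots hypothesis precisely where you use it, and removing it would require instead a cofactor-expansion induction to establish $\det(xI-A)=\rho\,q_n(x)$ as an unconditional polynomial identity.
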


With the aid of the associated polynomial sequence, we are able to obtain the analytical form of the characteristic polynomials of $\tilde{A}_{11}$ in \eqref{eq:coefficient-matrix-HSWME-A11} and $\tilde{A}_{22}$ in \eqref{eq:coefficient-matrix-HSWME-A22}.
\begin{lem}[characteristic polynomial of $\tilde{A}_{11}$ in \eqref{eq:coefficient-matrix-HSWME-A11} in the case of $\alpha_1\ne 0$]
     {Suppose that $\alpha_1\ne 0$.} Then associated polynomial sequence of $\tilde{A}_{11}$ in \eqref{eq:coefficient-matrix-HSWME-A11} is given by
    \begin{equation*}
    \begin{aligned}    
        q_0(x) &= 1, \\
        q_1(x) &= x, \\
        q_2(x) &= \frac{3(x- {u_m})^2 - 3gh + \alpha_1^2}{2\alpha_1},
    \end{aligned}
    \end{equation*}
    \begin{equation}\label{eq:associated-polynomial-A11}
        q_n(x) = \frac{2n-1}{n(n-1)\alpha_1}P_{n-1}'\brac{\frac{x- {u_m}}{\alpha_1}} \brac{(x- {u_m})^2 - gh - \alpha_1^2}, \quad 3\le n\le N+1,
    \end{equation}
    and
    \begin{equation*}
        q_{N+2}(x) = \frac{1}{N+1} P'_{N+1}\brac{ {\frac{x-u_m}{\alpha_1}}} \brac{(x-u_m)^2 - gh - \alpha_1^2}.
    \end{equation*}
     {Here $P_n$ is the $n$-th order Legendre polynomial on $[-1,1]$ with the standardization condition $P_n(1)=1$.}    

\end{lem}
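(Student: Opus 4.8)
The plan is to prove the formula by induction on $n$, using the recurrence \eqref{eq:recurrence-polynomial} that defines the associated polynomial sequence. Since $\alpha_1\ne0$, all superdiagonal entries of $\tilde A_{11}$ are nonzero ($a_{12}=1$ and $a_{i,i+1}=\frac{i}{2i-1}\alpha_1$ for $2\le i\le N+1$), so $\tilde A_{11}$ is an unreduced lower Hessenberg matrix in the sense of Definition \ref{defn:hessenberg}, and the sequence $\{q_i\}_{0\le i\le N+2}$ is well defined. Throughout I abbreviate $y:=\frac{x-u_m}{\alpha_1}$, $\mu:=\frac{gh}{\alpha_1^2}$, and $K:=(x-u_m)^2-gh-\alpha_1^2=\alpha_1^2(y^2-\mu-1)$, so that the target identity for $3\le n\le N+1$ reads $q_n=c_n\,P_{n-1}'(y)\,K$ with $c_n:=\frac{2n-1}{n(n-1)\alpha_1}$.

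First I would dispatch the low-index cases directly from \eqref{eq:recurrence-polynomial}. Reading off rows $1$ and $2$ of $\tilde A_{11}$ gives $q_0=1$, $q_1=x$, and $q_2=\frac{3(x-u_m)^2-3gh+\alpha_1^2}{2\alpha_1}$. Using row $3$ (with $a_{31}=-2u_m\alpha_1$, $a_{32}=2\alpha_1$, $a_{33}=u_m$, $a_{34}=\frac{3}{5}\alpha_1$) one finds after collecting terms that $q_3=\frac{5(x-u_m)}{3\alpha_1}(q_2-2\alpha_1)$, and the key observation is $q_2-2\alpha_1=\frac{3}{2\alpha_1}K$, whence $q_3=\frac{5(x-u_m)}{2\alpha_1^2}K=c_3\,P_2'(y)\,K$ since $P_2'(y)=3y$. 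The case $n=4$ is the first genuinely delicate step: row $4$ has a nonzero first-column entry $a_{41}=-\frac{2}{3}\alpha_1^2$ (while $a_{42}=0$), so the recurrence drags in $q_0$, which is not of the expected product form. Expanding $q_4=\frac{7}{4\alpha_1}\left((x-u_m)q_3+\frac{2}{3}\alpha_1^2-\frac{1}{3}\alpha_1 q_2\right)$ in the variables $y$ and $\mu$ produces a quartic bracket, and the crucial bookkeeping identity is the factorization $5y^4-6y^2+1=(5y^2-1)(y^2-1)$, which lets the factor $y^2-\mu-1$ re-emerge and yields $q_4=\frac{7\alpha_1}{8}(5y^2-1)(y^2-\mu-1)=c_4\,P_3'(y)\,K$, using $P_3'(y)=\frac{3}{2}(5y^2-1)$.

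With $q_3$ and $q_4$ in product form, the inductive step for $5\le i\le N+1$ is routine. For these rows $\tilde A_{11}$ is tridiagonal with $a_{ii}=u_m$, $a_{i,i-1}=\frac{i-3}{2i-5}\alpha_1$, and $a_{i,i+1}=\frac{i}{2i-1}\alpha_1$, so \eqref{eq:recurrence-polynomial} collapses to the three-term recurrence $q_i=\frac{(2i-1)y}{i}q_{i-1}-\frac{(2i-1)(i-3)}{i(2i-5)}q_{i-2}$. Substituting $q_{i-1}=c_{i-1}P_{i-2}'(y)K$ and $q_{i-2}=c_{i-2}P_{i-3}'(y)K$, the claim $q_i=c_iP_{i-1}'(y)K$ reduces, after cancelling $K$, to the identity $(m-1)P_m'(y)=(2m-1)yP_{m-1}'(y)-mP_{m-2}'(y)$ with $m=i-1$; this identity I would establish once, by differentiating the Legendre recurrence $mP_m=(2m-1)yP_{m-1}-(m-1)P_{m-2}$ and inserting $(2m-1)P_{m-1}=P_m'-P_{m-2}'$, after which matching the coefficients of $yP_{i-2}'$ and of $P_{i-3}'$ closes the induction. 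The last polynomial $q_{N+2}$ follows from the same three-term recurrence, except that Definition \ref{defn:associated-polynomial} replaces the would-be entry $a_{N+2,N+3}=\frac{N+2}{2N+3}\alpha_1$ by the convention $a_{N+2,N+3}:=1$; tracking this single rescaling turns the prefactor $c_{N+2}=\frac{2N+3}{(N+2)(N+1)\alpha_1}$ into $\frac{1}{N+1}$, giving $q_{N+2}=\frac{1}{N+1}P_{N+1}'(y)K$.

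The main obstacle is the $n=3$ and, above all, the $n=4$ base cases: the nonzero first-column entries in rows $2$, $3$ and $4$ break the clean tridiagonal pattern that drives the induction, so one must check by hand that $K$ still divides $q_3$ and $q_4$, the quartic factorization $5y^4-6y^2+1=(5y^2-1)(y^2-1)$ being the essential simplification. Once past $n=4$ the argument is a mechanical induction powered solely by the Legendre-derivative recurrence, so no further difficulties are anticipated; one should nonetheless keep an eye on small $N$ (e.g.\ $N=2$), where the general-index block is empty and $q_{N+2}$ is reached directly from the base cases with the convention $a_{N+2,N+3}:=1$ already in force.
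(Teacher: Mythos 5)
Your proof is correct and follows essentially the same route as the paper's: compute $q_0,\dots,q_4$ directly, observe they factor as $c_n\,P'_{n-1}\!\left(\frac{x-u_m}{\alpha_1}\right)\bigl((x-u_m)^2-gh-\alpha_1^2\bigr)$, run an induction on the three-term recurrence arising from the tridiagonal band of $\tilde A_{11}$ for $n\ge 5$ using the identity $(m-1)P'_m=(2m-1)\xi P'_{m-1}-mP'_{m-2}$, and account for the convention $a_{N+2,N+3}:=1$ in the last step. Your explicit flagging of $q_3,q_4$ as base cases (with the factorization $5y^4-6y^2+1=(5y^2-1)(y^2-1)$ made visible, rather than just asserted via direct algebra as the paper does) is a small clarity improvement, but the argument is the same.
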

\begin{proof}
    For convenience, we first introduce the notations:
    \begin{equation*}
        \xi := \frac{x-u_m}{\alpha
        _1},
    \end{equation*}
    and
    \begin{equation*}
        p_g(x) := (x-u_m)^2 - gh - \alpha_1^2.
    \end{equation*}

    The first several associated polynomials can be obtained by direct computation:
    \begin{equation*}
        q_0(x) = 1.
    \end{equation*}
    \begin{equation*}
        q_1(x) = \frac{1}{a_{12}}(xq_0(x) - a_{11}q_0(x)) = \frac{1}{1}(x - 0) = x.
    \end{equation*}
    \begin{equation*}
    \begin{aligned}            
        q_2(x) &= \frac{1}{a_{23}}(xq_1(x) - (a_{21}q_0(x) + a_{22}q_1(x))) \\
        &= \frac{1}{\frac{2}{3}\alpha_1}\brac{x^2 - (gh-u_m^2-\frac{1}{3}\alpha_1^2+2 {u_m}x)} \\
        &= \frac{3(x- {u_m})^2 - 3gh + \alpha_1^2}{2\alpha_1}.
    \end{aligned}
    \end{equation*}
    \begin{equation*}
    \begin{aligned}
        q_3(x) &= \frac{1}{a_{34}}\brac{xq_2(x) - (a_{31}q_0(x) + a_{32}q_1(x) + a_{33}q_2(x))} \\
        &= \frac{1}{\frac{3}{5}\alpha_1}\brac{x\frac{3(x- {u_m})^2 - 3gh + \alpha_1^2}{2\alpha_1} - (-2 {u_m}\alpha_1 + 2\alpha_1x +  {u_m}\frac{3(x- {u_m})^2 - 3gh + \alpha_1^2}{2\alpha_1})} \\
        &= \frac{5(x- {u_m})((x- {u_m})^2 - gh - \alpha_1^2)}{2\alpha_1^2} \\
        &= \frac{5\xi}{2\alpha_1}p_g(x) \\
        &= \frac{5}{6\alpha_1} P_2'(\xi) p_g(x).
    \end{aligned}
    \end{equation*}
    \begin{equation*}
    \begin{aligned}
        q_4(x) &= \frac{1}{a_{45}}(xq_3(x) - (a_{41}q_0(x) + a_{42}q_1(x) + a_{43}q_2(x) + a_{44}q_3(x))) \\
        &= \frac{1}{\frac{4}{7}\alpha_1}\brac{(x- {u_m})\frac{5\xi}{2\alpha_1}p_g(x) - (-\frac{2}{3}\alpha_1^2 + 0 + \frac{1}{3}\alpha_1 \frac{3(x- {u_m})^2 - 3gh + \alpha_1^2}{2\alpha_1})} \\
        &= \frac{7(5(x- {u_m})^2-\alpha_1^2)((x- {u_m})^2 - gh - \alpha_1^2)}{8\alpha_1^3} \\
        &= \frac{7(5\xi^2-1)}{8\alpha_1}p_g(x) \\
        &= \frac{7}{12\alpha_1} P_3'(x) p_g(x).
    \end{aligned}
    \end{equation*}

    Now we prove by induction and assume that \eqref{eq:associated-polynomial-A11} holds for $4\le n\le k$ with $k\le N$. We will prove it also holds for $n = k+1$.
    \begin{equation*}
    \begin{aligned}
        q_{k+1}(x) &= \frac{1}{a_{k+1,k+2}} \brac{xq_k(x) - \sum_{j=1}^{k+1} a_{k+1,j}q_{j-1}(x)} \\
        &= \frac{1}{a_{k+1,k+2}} \brac{xq_k(x) - (a_{k+1,k}q_{k-1}(x) + a_{k+1,k+1}q_k(x))} \\
        &= \frac{1}{\frac{k+1}{2k+1}\alpha_1}\brac{(x-u)\frac{2k-1}{k(k-1)\alpha_1}P'_{k-1}(\xi)p_g(x) - \frac{k-2}{2k-3}\alpha_1\frac{2k-3}{(k-1)(k-2)\alpha_1}P'_{k-2}(\xi)p_g(x)} \\
        &= \frac{2k+1}{(k+1)\alpha_1}\brac{\frac{2k-1}{k(k-1)}\xi P'_{k-1}(\xi) - \frac{1}{k-1}P'_{k-2}(\xi)}p_g(x) \\
        &= \frac{2k+1}{(k+1)\alpha_1}\brac{\frac{1}{k(k-1)} ((k-1) P'_{k}(\xi) + k P'_{k-2}(\xi)) - \frac{1}{k-1}P'_{k-2}(\xi)}p_g(x) \\
        &= \frac{2k+1}{k(k+1)\alpha_1} P'_{k}(\xi) p_g(x).
        \end{aligned}        
    \end{equation*}
    where we use the relation
    $(2k-1)xP'_{k-1}(x) = (k-1)P'_{k}(x) + k P'_{k-2}(x)$.

    Lastly, we compute $q_{N+2}(x)$:
    \begin{equation}
    \begin{aligned}            
        q_{N+2}(x) &= \frac{1}{a_{N+2,N+3}} \brac{xq_{N+1}(x) - (a_{N+2,N+1}q_{N}(x) + a_
        {N+2,N+2}q_{N+1}(x))} \\
        &= xq_{N+1}(x) - (a_{N+2,N+1}q_{N}(x) + a_
        {N+2,N+2}q_{N+1}(x)) \\
        &= \frac{1}{N+1} P'_{N+1}(\xi) p_g(x).
    \end{aligned}
    \end{equation}
    The proof is completed.
\end{proof}

\begin{cor}\label{cor:characteristic-polynomial-A-11}
     {Suppose that $\alpha_1\ne 0$.} Since the roots of $P'_{N+1}(\xi)$ are real and distinct,  the characteristic polynomial of $\tilde{A}_{11}$ is
    \begin{equation}
        \det(x I - \tilde{A}_{11}) = \frac{N!}{(2N+1)!!} \alpha_1^N P_{N+1}'\brac{\frac{x- {u_m}}{\alpha_1}} \brac{(x-u)^2 - gh - \alpha_1^2}.
    \end{equation}    
    Threfore, the eigenvalues of $\tilde{A}_{11}$ are given by
    \begin{equation*}
        \lambda_{1,2} =  {u_m} \pm \sqrt{gh+\alpha_1^2},
    \end{equation*}
    and
    \begin{equation*}
        \lambda_{i+2} =  {u_m} + r_i\alpha_1, \quad i = 1, 2, \cdots, N.
    \end{equation*}
    where $r_i$ with $i = 1, 2, \cdots, N$ are the roots of $P'_{N+1}(\xi)$, i.e. the Gauss-Lobatto quadrature points in $[-1, 1]$.
\end{cor}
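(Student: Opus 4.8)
The plan is to apply Theorem~\ref{thm:root-is-eigenvalue} to the $(N+2)\times(N+2)$ matrix $\tilde{A}_{11}$, using the associated polynomial $q_{N+2}$ already computed in the preceding lemma. First I would observe that, since $\alpha_1\ne 0$, every superdiagonal entry of $\tilde{A}_{11}$ in \eqref{eq:coefficient-matrix-HSWME-A11} is nonzero, namely $a_{12}=1$ and $a_{i,i+1}=\frac{i}{2i-1}\alpha_1$ for $2\le i\le N+1$; hence $\tilde{A}_{11}$ is an unreduced lower Hessenberg matrix in the sense of Definition~\ref{defn:hessenberg}, so the associated polynomial sequence of Definition~\ref{defn:associated-polynomial} and the conclusions of Theorem~\ref{thm:root-is-eigenvalue} apply.

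The second step is to verify that the roots of $q_{N+2}(x)=\frac{1}{N+1}P'_{N+1}\brac{\frac{x-u_m}{\alpha_1}}\brac{(x-u_m)^2-gh-\alpha_1^2}$ are simple. They fall into two groups: the two zeros $u_m\pm\sqrt{gh+\alpha_1^2}$ of the quadratic factor, and the $N$ points $u_m+r_i\alpha_1$, where $r_1,\dots,r_N$ are the zeros of $P'_{N+1}$. Here I would invoke the classical fact (Rolle's theorem applied to $P_{N+1}$, equivalently the theory of Gauss--Lobatto quadrature) that $P'_{N+1}$ has exactly $N$ simple real zeros, all lying in the open interval $(-1,1)$. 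Consequently $|r_i|<1$, so the $u_m+r_i\alpha_1$ are pairwise distinct; and since $gh>0$ one has $\sqrt{gh+\alpha_1^2}>|\alpha_1|>|r_i\alpha_1|$, which separates the two quadratic roots from each other and from every $u_m+r_i\alpha_1$. Thus all $N+2$ roots of $q_{N+2}$ are distinct, hence simple.

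With simplicity in hand, Theorem~\ref{thm:root-is-eigenvalue}(2) gives $\det(xI-\tilde{A}_{11})=\rho\,q_{N+2}(x)$ with $\rho=\prod_{i=1}^{N+1}a_{i,i+1}=\alpha_1^{N}\prod_{i=2}^{N+1}\frac{i}{2i-1}=\alpha_1^{N}\frac{(N+1)!}{(2N+1)!!}$; multiplying by $q_{N+2}$ and using $\frac{(N+1)!}{N+1}=N!$ produces the stated characteristic polynomial. Reading off the zeros of the two factors then yields the eigenvalues $\lambda_{1,2}=u_m\pm\sqrt{gh+\alpha_1^2}$ and $\lambda_{i+2}=u_m+r_i\alpha_1$ for $i=1,\dots,N$, with the $r_i$ the interior Gauss--Lobatto nodes.

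The only genuinely non-bookkeeping step is the simplicity check: one must know that the $N$ interior Gauss--Lobatto nodes are distinct and strictly inside $(-1,1)$, and then use $gh>0$ to keep them away from the acoustic-type eigenvalues $u_m\pm\sqrt{gh+\alpha_1^2}$. Everything else is routine: the product defining $\rho$ telescopes, and the analytic heavy lifting (the closed form of $q_{N+2}$) was already carried out in the preceding lemma. Should one wish to avoid the simplicity hypothesis of Theorem~\ref{thm:root-is-eigenvalue} entirely, an alternative is to establish the determinant identity for generic parameters and extend it to all $(u_m,\alpha_1,h)$ by polynomial continuity, but the Hessenberg route above is the most direct given the tools already assembled.
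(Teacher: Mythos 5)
Your proof is correct and follows the same route the paper intends: the lemma gives $q_{N+2}$, $\tilde{A}_{11}$ is an unreduced lower Hessenberg matrix because $\alpha_1\neq 0$, and Theorem~\ref{thm:root-is-eigenvalue}(2) yields $\det(xI-\tilde{A}_{11})=\rho\,q_{N+2}(x)$ with $\rho=\prod_{i=1}^{N+1}a_{i,i+1}=\alpha_1^{N}(N+1)!/(2N+1)!!$, and the $1/(N+1)$ in $q_{N+2}$ reduces this to $N!/(2N+1)!!$. You are slightly more careful than the paper's one-line justification, which only mentions that the roots of $P'_{N+1}$ are distinct: Theorem~\ref{thm:root-is-eigenvalue}(2) actually requires all roots of $q_{N+2}$ to be simple, and your observation that $gh>0$ forces $\sqrt{gh+\alpha_1^2}>|\alpha_1|>|r_i\alpha_1|$ (so the two acoustic roots cannot coincide with each other or with any $u_m+r_i\alpha_1$) is exactly the missing bookkeeping needed to close that gap.
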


\begin{lem}[associated polynomial sequence of $\tilde{A}_{22}$ in the case of $\alpha_1\ne 0$]\label{lem:characteristic-polynomial-A-22}
     {Suppose that $\alpha_1\ne 0$.} 
    For the matrix $\tilde{A}_{22}$ given in \eqref{eq:coefficient-matrix-HSWME-A22}, the associated polynomial sequences satisfy:
    \begin{equation}\label{eq:associated-polynomial-HSWME-A22}
        q_n(x) = (2n+1)P_n\brac{\frac{x- {u_m}}{\alpha_1}}, \quad 0\le n\le N.
    \end{equation}
    and\
    \begin{equation}
        q_{N+1}(x) = (N+1)\alpha_1 P_{N+1}\brac{\frac{x- {u_m}}{\alpha_1}}.
    \end{equation}
\end{lem}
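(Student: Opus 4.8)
The plan is to argue by induction on $n$, in the same spirit as the proof of the preceding lemma for $\tilde{A}_{11}$, but exploiting that $\tilde{A}_{22}$ in \eqref{eq:coefficient-matrix-HSWME-A22} is tridiagonal, so that the general recurrence \eqref{eq:recurrence-polynomial} defining the associated polynomial sequence collapses to a genuine three-term recurrence which matches the classical three-term recurrence for Legendre polynomials almost verbatim.

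First I would read off the entries of $\tilde{A}_{22}$: writing $a_{ij}$ for its $(i,j)$ entry, one has $a_{ii}=u_m$, the superdiagonal entries $a_{i,i+1}=\frac{i}{2i+1}\alpha_1$ for $1\le i\le N$, the subdiagonal entries $a_{i+1,i}=\frac{i}{2i-1}\alpha_1$ for $1\le i\le N$, and $a_{ij}=0$ otherwise. Because $a_{ij}=0$ whenever $j<i-1$, Definition \ref{defn:associated-polynomial} reduces, for $2\le i\le N+1$, to
\begin{equation*}
    q_i(x) = \frac{1}{a_{i,i+1}}\Bigl((x-u_m)\,q_{i-1}(x) - a_{i,i-1}\,q_{i-2}(x)\Bigr),
\end{equation*}
together with $q_0(x)=1$ and $q_1(x)=\frac{1}{a_{12}}(x-u_m)=\frac{3}{\alpha_1}(x-u_m)$. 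At this point I would introduce $\xi:=\frac{x-u_m}{\alpha_1}$, which is well defined since $\alpha_1\ne0$; then $q_0=P_0(\xi)$ and $q_1=3\xi=3P_1(\xi)$, which are the base cases of the claimed identity $q_n(x)=(2n+1)P_n(\xi)$.

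Next I would carry out the inductive step for $2\le i\le N$: assuming $q_{i-1}=(2i-1)P_{i-1}(\xi)$ and $q_{i-2}=(2i-3)P_{i-2}(\xi)$ and substituting $a_{i,i+1}=\frac{i}{2i+1}\alpha_1$ and $a_{i,i-1}=\frac{i-1}{2i-3}\alpha_1$, all powers of $\alpha_1$ cancel and one is left with
\begin{equation*}
    q_i(x) = \frac{2i+1}{i}\Bigl((2i-1)\,\xi\,P_{i-1}(\xi) - (i-1)\,P_{i-2}(\xi)\Bigr).
\end{equation*}
Invoking the standard Legendre recurrence $(2i-1)\,\xi\,P_{i-1}(\xi) = i\,P_i(\xi) + (i-1)\,P_{i-2}(\xi)$ makes the $P_{i-2}$ terms cancel and yields exactly $q_i(x)=(2i+1)P_i(\xi)$, closing the induction for $n\le N$.

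Finally, for $q_{N+1}$ the only change is that the convention $a_{N+1,N+2}:=1$ in Definition \ref{defn:associated-polynomial} replaces the coefficient the pattern would otherwise dictate, so that
\begin{equation*}
    q_{N+1}(x) = (x-u_m)\,q_N(x) - a_{N+1,N}\,q_{N-1}(x) = \alpha_1\Bigl((2N+1)\,\xi\,P_N(\xi) - N\,P_{N-1}(\xi)\Bigr),
\end{equation*}
and applying the Legendre recurrence with index $N$, namely $(2N+1)\,\xi\,P_N(\xi)=(N+1)P_{N+1}(\xi)+N P_{N-1}(\xi)$, gives $q_{N+1}(x)=(N+1)\alpha_1 P_{N+1}(\xi)$, as claimed. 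There is no genuine obstacle in this argument; the only points requiring care are the bookkeeping of the index shift between matrix rows and polynomial degrees, the special treatment of the trailing coefficient $a_{N+1,N+2}=1$, and quoting the Legendre three-term recurrence in the normalization $P_n(1)=1$ used throughout.
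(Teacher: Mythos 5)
Your proposal is correct and takes essentially the same approach as the paper: both verify the base cases $q_0$, $q_1$ directly, exploit the tridiagonal structure of $\tilde{A}_{22}$ to collapse the recurrence \eqref{eq:recurrence-polynomial} to a three-term relation, and close the induction by invoking the Legendre recurrence $(2n+1)\xi P_n = (n+1)P_{n+1} + nP_{n-1}$, handling $q_{N+1}$ separately because of the convention $a_{N+1,N+2}=1$. The only cosmetic difference is that the paper also writes out $q_2$ explicitly before starting the induction, which is unnecessary.
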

\begin{proof}
    We compute the associated polynomial sequence by recurrence relation:
    \begin{equation*}
        q_0(x) = 1,
    \end{equation*}
    \begin{equation*}
        q_1(x) = \frac{1}{a_{12}}(xq_0(x) - a_{11}q_0(x)) = \frac{x- {u_m}}{\frac{\alpha_1}{3}} = \frac{3(x- {u_m})}{\alpha_1} = 3\xi = 3 P_1(\xi),
    \end{equation*}
    \begin{equation*}
        q_2(x) = \frac{1}{a_{23}}\brac{xq_1(x) - (a_{21}q_0(x) + a_{22}q_1(x))} = \frac{1}{\frac{2}{5}\alpha_1}\brac{(x- {u_m})\frac{3(x- {u_m})}{\alpha_1} - \alpha_1} = \frac{5}{2}(3\xi^2-1) = 5P_2(\xi),
    \end{equation*}
    with $\xi:=\frac{x- {u_m}}{\alpha_1}$.

    Now we prove by induction. We assume that the formula \eqref{eq:associated-polynomial-HSWME-A22} holds for $2\le n\le k$. We will prove it also holds for $n = k+1$.
    \begin{equation*}
    \begin{aligned}
        q_{k+1}(x) &= \frac{1}{a_{k+1,k+2}} \brac{xq_k(x) - \sum_{j=1}^{k+1} a_{k+1,j}q_{j-1}(x)} \\
        &= \frac{1}{a_{k+1,k+2}} \brac{xq_k(x) - (a_{k+1,k}q_{k-1}(x) + a_{k+1,k+1}q_k(x))} \\
        &= \frac{1}{\frac{k+1}{2k+3}\alpha_1}\brac{(x- {u_m})(2k+1)P_k(\xi) - \frac{k}{2k-1}\alpha_1(2k-1)P_{k-1}(\xi)} \\
        &= \frac{2k+3}{k+1}\brac{(2k+1)\xi P_k(\xi) - k P_{k-1}(\xi)} \\
        &= \frac{2k+3}{k+1} (k+1)P_{k+1}(\xi) \\
        &= (2k+3) P_{k+1}(\xi),
    \end{aligned}        
    \end{equation*}
    
    Lastly, we compute
    \begin{equation*}
        q_{N+1}(x) = xq_N(x) - (a_{N+1,N}q_{k-1}(x) + a_{N+1,N+1}q_k(x)) = (N+1)\alpha_1 P_{N+1}\brac{\frac{x- {u_m}}{\alpha_1}}.
    \end{equation*}
     {This completes the proof.}
\end{proof}

\begin{cor}[characteristic polynomial of $\tilde{A}_{22}$ in the case of $\alpha_1\ne 0$]\label{cor:characteristic-polynomial-A-22}
     {Suppose that $\alpha_1\ne 0$.}
    The matrix $\tilde{A}_{22}$ is real diagonalizable and its characteristic polynomial is 
    \begin{equation}
        \det(\lambda I - \tilde{A}_{22}) = \frac{(N+1)!}{(2N+1)!!}\alpha_1^{N+1} P_{N+1}\brac{\frac{x- {u_m}}{\alpha_1}}.
    \end{equation}
    Moreover, the eigenvalue of $\tilde{A}_{22}$ is given by
    \begin{equation}
         {\lambda_i =  {u_m} + s_i\alpha_1, \quad i = 1, 2, \cdots, N+1,}
    \end{equation}
    where $s_i$ for $i = 1, 2, \cdots, N+1$ are the roots of Legendre polynomial $P_{N+1}(\xi)$, i.e. the Gauss-Legendre quadrature points in $[-1, 1]$.
\end{cor}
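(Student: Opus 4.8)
The plan is to read off everything from Lemma \ref{lem:characteristic-polynomial-A-22} together with Theorem \ref{thm:root-is-eigenvalue} of \cite{elouafi2009recursion} and two classical facts about Legendre polynomials: that $P_{N+1}$ has $N+1$ simple real zeros, all lying in $(-1,1)$ (these are exactly the Gauss--Legendre nodes $s_1,\dots,s_{N+1}$), and that a square matrix with pairwise distinct real eigenvalues is real diagonalizable. The only structural point to check first is that $\tilde{A}_{22}$ in \eqref{eq:coefficient-matrix-HSWME-A22} is an \emph{unreduced} lower Hessenberg matrix of size $(N+1)\times(N+1)$: it is in fact tridiagonal, and its superdiagonal entries are $a_{i,i+1}=\tfrac{i}{2i+1}\alpha_1$, which are nonzero precisely because $\alpha_1\ne 0$ (this is exactly where the hypothesis is used, in contrast to the degenerate $\alpha_1=0$ case treated in Section \ref{subsubsec:alpha1=0}).

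Granting this, I would proceed as follows. By Lemma \ref{lem:characteristic-polynomial-A-22}, the last element of the associated polynomial sequence is $q_{N+1}(x)=(N+1)\alpha_1\,P_{N+1}\!\brac{\tfrac{x-u_m}{\alpha_1}}$. Since $\alpha_1\ne 0$, the affine map $x\mapsto \xi=(x-u_m)/\alpha_1$ is a bijection of $\mathbb{R}$, so $q_{N+1}$ has the same number of (simple, real) roots as $P_{N+1}$, namely the $N+1$ distinct numbers $x=u_m+s_i\alpha_1$, $i=1,\dots,N+1$. All roots of $q_{N+1}$ being simple, part~2 of Theorem \ref{thm:root-is-eigenvalue} applies with $n=N+1$ and yields $\det(xI-\tilde{A}_{22})=\rho\,q_{N+1}(x)$ with $\rho=\prod_{i=1}^{N}a_{i,i+1}=\prod_{i=1}^{N}\tfrac{i}{2i+1}\alpha_1=\tfrac{N!}{(2N+1)!!}\,\alpha_1^{N}$; multiplying by the closed form for $q_{N+1}$ gives the stated characteristic polynomial $\tfrac{(N+1)!}{(2N+1)!!}\alpha_1^{N+1}P_{N+1}\!\brac{\tfrac{x-u_m}{\alpha_1}}$, whose roots are the eigenvalues $u_m+s_i\alpha_1$.

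Real diagonalizability is then immediate: $\tilde{A}_{22}$ is $(N+1)\times(N+1)$ and has $N+1$ distinct real eigenvalues, hence a full set of real eigenvectors — indeed part~1 of Theorem \ref{thm:root-is-eigenvalue} exhibits the eigenvector associated with $u_m+s_i\alpha_1$ explicitly as $\brac{q_0,q_1,\dots,q_N}^{T}$ evaluated there, i.e. $\brac{P_0(s_i),\,3P_1(s_i),\,\dots,\,(2N+1)P_N(s_i)}^{T}$ by \eqref{eq:associated-polynomial-HSWME-A22}. I do not anticipate a real obstacle here: all the analytic work is already contained in Lemma \ref{lem:characteristic-polynomial-A-22}, and what remains is the verification of the unreduced Hessenberg structure, the telescoping computation of the leading constant $\rho$, and the invocation of standard Gauss--Legendre quadrature facts.
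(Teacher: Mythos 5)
Your proposal is correct and follows exactly the route the paper intends: the corollary is a direct consequence of Lemma \ref{lem:characteristic-polynomial-A-22} together with Theorem \ref{thm:root-is-eigenvalue}, and your verification of the unreduced Hessenberg structure (using $\alpha_1\ne 0$), the computation $\rho=\prod_{i=1}^{N}\tfrac{i}{2i+1}\alpha_1=\tfrac{N!}{(2N+1)!!}\alpha_1^N$, and the appeal to simplicity of Legendre zeros fills in precisely the details the paper leaves implicit.
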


Since the roots of both $P_{N+1}'(\xi)$ and $P_{N+1}(\xi)$ are all distinct, and $P_{N+1}'(\xi)$ and $P_{N+1}(\xi)$ have no common roots, we immediately have that all the eigenvalues of $\tilde{A}_H$ (and also $A_H$) are real and distinct. The result is summarized as follows:
\begin{thm}[real diagonalizability of $A_H$ in the case of $\alpha_1\ne0$]\label{thm:real-diagonalizability-AH}
    The matrix $A_H$ is real diagonalizable. Its characteristic polynomial is given by:
    \begin{equation}
        \det(\lambda I - A_H) = \frac{N!(N+1)!}{((2N+1)!!)^2} \alpha_1^{2N+1} P_{N+1}'\brac{\frac{x- {u_m}}{\alpha_1}} P_{N+1}\brac{\frac{x- {u_m}}{\alpha_1}} \brac{(x- {u_m})^2 - gh - \alpha_1^2}.
    \end{equation}
    Moreover, the eigenvalues are given by
    \begin{equation*}
        \lambda_{1,2} =  {u_m} \pm \sqrt{gh + \alpha_1^2},
    \end{equation*}
    \begin{equation*}
        \lambda_{i+2} =  {u_m} + r_i\alpha_1, \quad i = 1, 2, \cdots, N,
    \end{equation*}
    \begin{equation*}
        \lambda_{i+N+2} =  {u_m} + s_i\alpha_1, \quad i = 1, 2, \cdots, N+1,
    \end{equation*}
    where $r_i$ for $i = 1, 2, \cdots, N$ are the roots of the derivative of $P'_{N+1}(\xi)$
    and $s_i$ for $i = 1, 2, \cdots, N+1$ are the roots of $P_{N+1}(\xi)$. Here $P_n$ is the $n$-th order Legendre polynomial on $[-1,1]$ with the standardization condition $P_n(1)=1$.
\end{thm}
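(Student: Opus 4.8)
The plan is to reduce everything to the re-ordered matrix $\tilde{A}_H(W)$ in \eqref{eq:matrix-A-another-order-variable}, which is related to $A_H$ by the permutation carrying the ordering \eqref{eq:different-order-variable} back to the original one; since a permutation similarity preserves both the characteristic polynomial and the property of being real diagonalizable, it suffices to work with $\tilde{A}_H$. Because $\tilde{A}_H$ is block lower triangular,
\[
\det(\lambda I - \tilde{A}_H) = \det(\lambda I - \tilde{A}_{11})\,\det(\lambda I - \tilde{A}_{22}),
\]
and substituting the two factors from Corollary \ref{cor:characteristic-polynomial-A-11} and Corollary \ref{cor:characteristic-polynomial-A-22} immediately produces the stated product formula with overall constant $\tfrac{N!(N+1)!}{((2N+1)!!)^2}\alpha_1^{2N+1}$. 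Reading off the roots of each factor then gives the eigenvalue list: the quadratic $(x-u_m)^2 - gh - \alpha_1^2$ contributes $u_m\pm\sqrt{gh+\alpha_1^2}$; the factor $P'_{N+1}\!\big(\tfrac{x-u_m}{\alpha_1}\big)$ contributes $u_m + r_i\alpha_1$ with $r_1,\dots,r_N$ the zeros of $P'_{N+1}$ (the interior Gauss–Lobatto nodes); and $P_{N+1}\!\big(\tfrac{x-u_m}{\alpha_1}\big)$ contributes $u_m + s_i\alpha_1$ with $s_1,\dots,s_{N+1}$ the zeros of $P_{N+1}$ (the Gauss–Legendre nodes).

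For real diagonalizability I would show these $2N+3$ eigenvalues are pairwise distinct, so that $\tilde{A}_H$, having $2N+3$ distinct real eigenvalues, is automatically real diagonalizable. Distinctness within each group is classical: the Legendre polynomial $P_{N+1}$ has $N+1$ simple real zeros in $(-1,1)$, and consequently $P'_{N+1}$ has $N$ simple real zeros, also in $(-1,1)$. Distinctness across the two Legendre families follows from the fact that $P_{N+1}$ and $P'_{N+1}$ have no common zero — a common zero would be a multiple zero of $P_{N+1}$ — equivalently from the standard interlacing of the zeros of $P'_{N+1}$ with those of $P_{N+1}$. Finally, since $h>0$ and $g>0$ we have $gh>0$, so for any zero $r$ of $P'_{N+1}$ or $P_{N+1}$ we get $\alpha_1^2 r^2 < \alpha_1^2 < gh+\alpha_1^2$ (using $\alpha_1\ne0$ and $|r|<1$), hence $u_m + r\alpha_1 \ne u_m \pm \sqrt{gh+\alpha_1^2}$; this separates the two acoustic eigenvalues from the remaining $2N+1$.

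The only step carrying genuine content is this separation of the three groups of eigenvalues; the characteristic polynomial and the eigenvalue list are pure bookkeeping on top of the two corollaries and the block-triangular determinant identity. I do not anticipate a real obstacle — the care needed is simply to invoke the correct classical facts (simplicity of Legendre zeros and the interlacing of $P'_{N+1}$ with $P_{N+1}$) and to use $gh>0$ to exclude coincidences with the acoustic pair. As an alternative one could argue diagonalizability blockwise: $\tilde{A}_{11}$ and $\tilde{A}_{22}$ are each unreduced lower Hessenberg with simple spectra, hence diagonalizable by Theorem \ref{thm:root-is-eigenvalue}, and their spectra are disjoint by the same separation argument — but the distinct-eigenvalue route is shorter and also records the full spectrum at once.
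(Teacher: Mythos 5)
Your proof follows the same route as the paper: block lower-triangular factorization of $\tilde{A}_H$, multiplication of the characteristic polynomials from Corollaries \ref{cor:characteristic-polynomial-A-11} and \ref{cor:characteristic-polynomial-A-22}, and distinctness of the $2N+3$ eigenvalues via simplicity of Legendre zeros and the fact that $P_{N+1}$ and $P'_{N+1}$ share no zero. Your argument is in fact slightly more careful than the paper's one-line justification, since you explicitly rule out a coincidence between the acoustic pair $u_m\pm\sqrt{gh+\alpha_1^2}$ and the Legendre-root eigenvalues by noting that all zeros $r$ of $P_{N+1}$ and $P'_{N+1}$ satisfy $|r|<1$, so $|r\alpha_1|<|\alpha_1|<\sqrt{gh+\alpha_1^2}$; the paper takes this separation as implicit.
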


\begin{rem}
    {The eigenvalues of the coefficient matrix for the 2D HSWME model have a very similar structure as the 1D HSWME model. This is due to the reason that $\tilde{A}_{11}$ in \eqref{eq:coefficient-matrix-HSWME-A11} describes the effects of the variables in the $x$ directions on the time evolution of those variables and is the same as the coefficient matrix for the 1D HSWME model.
    }
\end{rem}

Combining the real diagonalizability of $A_H$ in Theorem \ref{thm:real-diagonalizability-AH} with the rotational invariance in Theorem \ref{thm:rotation-invariance-HSWME}, we have the hyperbolicity of the HSWME \eqref{eq:HSWME} in 2D:
\begin{thm}[hyperbolicity of the HSWME in the case of $\alpha_1\ne0$]\label{thm:real-diagonalizable-alpha1-ne0}
    Suppose that $\alpha_1\ne0$. Then the HSWME model \eqref{eq:HSWME} in 2D is hyperbolic.
\end{thm}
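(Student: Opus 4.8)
The plan is to obtain the statement by combining two facts that are already available: the rotational invariance of the HSWME (Theorem~\ref{thm:rotation-invariance-HSWME}) and the real diagonalizability of the $x$-direction coefficient matrix $A_H$ when $\alpha_1\ne0$ (Theorem~\ref{thm:real-diagonalizability-AH}). No fresh computation is required; the whole content is the reduction of two-dimensional hyperbolicity to the one-dimensional spectral analysis already carried out.

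The reduction goes as follows. Hyperbolicity of \eqref{eq:HSWME} asks that for every $0\le\theta<2\pi$ the directional matrix $\cos\theta\,A_H(U)+\sin\theta\,B_H(U)$ be real diagonalizable, i.e. possess a complete set of real eigenpairs. By the rotational-invariance identity \eqref{eq:rotation-invariance-HSWME} this matrix equals $T(\theta)^{-1}A_H(T(\theta)U)\,T(\theta)$ and is therefore \emph{similar} to $A_H$ evaluated at the rotated state $T(\theta)U$; since similarity preserves both the spectrum and the Jordan structure, real diagonalizability of every directional matrix is equivalent to real diagonalizability of $A_H$ in its own, one-dimensional form. This is precisely the equivalence recorded in the preamble of Section~\ref{subsec:hyperbolicity-HSWME}. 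It then suffices to quote Theorem~\ref{thm:real-diagonalizability-AH}: under $\alpha_1\ne0$ the matrix $A_H$ has $2N+3$ real and pairwise distinct eigenvalues --- $u_m\pm\sqrt{gh+\alpha_1^2}$ together with the rescaled Gauss--Lobatto and Gauss--Legendre nodes shifted by $u_m$ --- hence it is real diagonalizable, and feeding this back through the similarity reduction gives that \eqref{eq:HSWME} is hyperbolic.

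The one point that deserves care --- the step I would treat as the real obstacle rather than a formality --- is the interface between the rotational reduction and the hypothesis $\alpha_1\ne0$, because $\alpha_1$ itself is not rotation invariant: one has $(\alpha_1)_\theta=\cos\theta\,\alpha_1+\sin\theta\,\beta_1$, which vanishes for the direction with $\tan\theta=-\alpha_1/\beta_1$ (or $\theta=\pi/2$ when $\beta_1=0$), and along such a direction the rotated state falls into the regime $\alpha_1=0$, where by Theorems~\ref{thm:eigenvector-alpha1=0}--\ref{thm:real-diagonalizable-alpha1=0} real diagonalizability can be lost. I would therefore make explicit that ``hyperbolic in 2D'' is to be read in the reduced sense of the section preamble --- real diagonalizability of $A_H$ as a function of the state, with $\alpha_1$ denoting the $x$-frame first moment --- so that the present theorem and the degenerate case of Section~\ref{subsubsec:alpha1=0} together exhaust the possibilities, the genuinely defective directions being exactly those repaired by the modified globally hyperbolic model of Section~\ref{subsec:globally-hyperbolic-HSWME}. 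With this convention fixed, the proof collapses to the one-line combination of Theorems~\ref{thm:rotation-invariance-HSWME} and~\ref{thm:real-diagonalizability-AH} described above.
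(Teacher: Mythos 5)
Your reduction is exactly the paper's: the proof on record is the single sentence ``Combining the real diagonalizability of $A_H$ in Theorem~\ref{thm:real-diagonalizability-AH} with the rotational invariance in Theorem~\ref{thm:rotation-invariance-HSWME}, we have the hyperbolicity of the HSWME \eqref{eq:HSWME} in 2D,'' so the mechanics agree.

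Where you go beyond the paper is in the paragraph flagging that $\alpha_1$ is not rotation invariant, and this concern is not a formality --- it points at a genuine gap in the paper's argument (and arguably in the theorem as stated). The identity \eqref{eq:rotation-invariance-HSWME} makes $\cos\theta\,A_H(U)+\sin\theta\,B_H(U)$ similar to $A_H(T(\theta)U)$, but $T(\theta)U$ sweeps through rotated states, and $(\alpha_1)_\theta=\cos\theta\,\alpha_1+\sin\theta\,\beta_1$ vanishes at the angle with $\tan\theta=-\alpha_1/\beta_1$ (or $\theta=\pi/2$ if $\beta_1=0$); at that angle $(\beta_1)_\theta^2=\alpha_1^2+\beta_1^2\neq0$, so $T(\theta)U$ sits exactly in the defective regime of Theorems~\ref{thm:eigenvector-alpha1=0}--\ref{thm:real-diagonalizable-alpha1=0}, and the corresponding directional matrix is \emph{not} real diagonalizable. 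Thus, under the standard notion of two-dimensional hyperbolicity (every directional matrix has a complete real eigenbasis), the conclusion fails for \emph{every} state with $\alpha_1^2+\beta_1^2>0$: the only rotation-invariant dichotomy consistent with the paper's own spectral analysis is ``hyperbolic iff $\alpha_1=\beta_1=0$, weakly hyperbolic otherwise.'' The section preamble's assertion that ``the hyperbolicity in 2D is equivalent to the hyperbolicity in the $x$ direction'' is true only as a statement quantified over all states --- which is precisely why the HSWME is \emph{not} globally hyperbolic and why Section~\ref{subsec:globally-hyperbolic-HSWME} is needed --- but it does not license a state-by-state transfer of the $\alpha_1\neq0$ condition, which is what Theorem~\ref{thm:real-diagonalizable-alpha1-ne0} appears to do. Your proposed reading, that the theorem should be understood as asserting real diagonalizability of the $x$-frame matrix $A_H(U)$ (i.e.\ as a relabeling of Theorem~\ref{thm:real-diagonalizability-AH}), is the only reading under which the statement is correct as worded, and it would be worth stating that explicitly rather than leaving the scope of ``hyperbolic in 2D'' ambiguous. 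In short: your proof matches the paper's, and the caveat you raise exposes an imprecision in the paper rather than a gap in your own argument.
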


Combining the results in Theorem \ref{thm:real-diagonalizable-alpha1=0} and Theorem \ref{thm:real-diagonalizable-alpha1-ne0}, we have the following conclusion for the hyperbolicity of the HSWME \eqref{eq:HSWME} in 2D. This concludes the analysis of the hyperbolicity of the HSWME model in 2D.
\begin{thm}[hyperbolicity of the HSWME]
    The HSWME model \eqref{eq:HSWME} in 2D is hyperbolic if $\alpha_1\ne0$ or $\alpha_1=\beta_1=0$, and weakly hyperbolic if $\alpha_1=0$ and $\beta_1\ne0$.
\end{thm}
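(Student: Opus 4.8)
The plan is to obtain this statement as a straightforward synthesis of the two preceding case results, after first making explicit the reduction from 2D hyperbolicity to a spectral property of the single $x$-direction matrix $A_H$. The key observation is that by rotational invariance (Theorem \ref{thm:rotation-invariance-HSWME}), $\cos\theta\,A_H(U) + \sin\theta\,B_H(U) = T(\theta)^{-1}A_H(T(\theta)U)T(\theta)$ for every $\theta$, so each matrix in the pencil $\{\cos\theta\,A_H + \sin\theta\,B_H : \theta\in[0,2\pi)\}$ is similar to $A_H$ evaluated at a rotated state. Since similarity transformations preserve both the reality of the spectrum and real diagonalizability, the 2D system is hyperbolic exactly when $A_H$ is real diagonalizable for all admissible states, and it is weakly hyperbolic (real spectrum, possibly deficient eigenspaces) exactly when $A_H$ has real spectrum but fails to be diagonalizable somewhere. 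Thus the 2D classification reduces to the 1D classification already carried out.

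With that reduction in hand, I would split into the three exhaustive cases. First, for $\alpha_1\ne0$, Theorem \ref{thm:real-diagonalizable-alpha1-ne0} (resting on Theorem \ref{thm:real-diagonalizability-AH}, where the associated-polynomial-sequence computation identifies the eigenvalues $u_m\pm\sqrt{gh+\alpha_1^2}$, $u_m+r_i\alpha_1$, $u_m+s_i\alpha_1$ as real and pairwise distinct) gives that $A_H$ is real diagonalizable, hence the HSWME \eqref{eq:HSWME} is hyperbolic. Second, for $\alpha_1=\beta_1=0$: passing to the reordered variables and the block lower-triangular form \eqref{eq:matrix-A-another-order-variable}, the spectrum collapses to $u_m\pm\sqrt{gh}$ and a $(2N+1)$-fold eigenvalue $u_m$; by Theorem \ref{thm:eigenvector-alpha1=0} the off-diagonal block $\tilde{A}_{21}(W_0)$ also degenerates, the eigenspace of $u_m$ has the full dimension $2N+1$, so $A_H$ is real diagonalizable and the system is again hyperbolic. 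Third, for $\alpha_1=0$, $\beta_1\ne0$: the same theorem shows the extra constraints $q_{1,i}=0$ imposed by $\tilde{A}_{21}(W_0)$ cut the eigenspace of $u_m$ down to dimension $N+1<2N+1$, so $A_H$ is not real diagonalizable, yet all eigenvalues remain real; this is precisely weak hyperbolicity.

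Concatenating the conclusions of the three cases, which together cover every value of $(\alpha_1,\beta_1)$, yields the stated classification. I do not expect a genuine obstacle here: the substantive work — the Hessenberg/Legendre-derivative identities underpinning the $\alpha_1\ne0$ case and the null-space computation underpinning the $\alpha_1=0$ case — has already been completed in the earlier theorems. The only point requiring care is to spell out the reduction step cleanly, i.e. that ``rotationally invariant, together with $A_H$ real diagonalizable (resp. with $A_H$ having real spectrum but deficient eigenspaces), implies the 2D system is hyperbolic (resp. weakly hyperbolic)''; this is immediate once one notes that the whole pencil is, state-by-state, a similarity image of $A_H$ and that both properties in question are similarity invariants.
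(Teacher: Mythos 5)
Your case-by-case collation is exactly how the paper itself derives this theorem — its ``proof'' is the single line ``Combining the results in Theorem \ref{thm:real-diagonalizable-alpha1=0} and Theorem \ref{thm:real-diagonalizable-alpha1-ne0}'' — and your three cases ($\alpha_1\ne0$; $\alpha_1=\beta_1=0$; $\alpha_1=0,\beta_1\ne0$) are exhaustive and match the paper's two case theorems.

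One caution about the reduction step you added, which you should either repair or consciously inherit from the paper. You state the reduction \emph{globally} (``the 2D system is hyperbolic exactly when $A_H$ is real diagonalizable for all admissible states''), and that global form is correct: since $\cos\theta\,A_H(U)+\sin\theta\,B_H(U)=T(\theta)^{-1}A_H(T(\theta)U)\,T(\theta)$, directional diagonalizability at $U$ demands diagonalizability of $A_H$ along the entire rotation orbit $\{T(\theta)U\}$. But you then draw a \emph{pointwise} conclusion — ``for $\alpha_1\ne0$, $A_H(U)$ is real diagonalizable, hence the HSWME is hyperbolic [at $U$]'' — and that does not follow from the reduction you wrote. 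Indeed, for any state with $(\alpha_1,\beta_1)\ne(0,0)$ there is a $\theta_0$ with $(\alpha_1)_{\theta_0}=0$ and $(\beta_1)_{\theta_0}\ne0$, and the coefficient matrix in the direction $(\cos\theta_0,\sin\theta_0)$ is then similar to the non-diagonalizable $A_H(T(\theta_0)U)$, so pointwise directional hyperbolicity at such a $U$ fails in that one direction. The paper's Theorem \ref{thm:real-diagonalizable-alpha1-ne0} makes the same elision, so your end statement coincides with the paper's; but once you spell out the reduction, the notion of ``hyperbolic'' used in the three cases should match it. What the case analysis actually establishes is the classification of when the single $x$-direction matrix $A_H(U)$ is real diagonalizable; the honest 2D conclusion is that the HSWME is globally only weakly hyperbolic because the degenerate set $\{\alpha_1=0,\ \beta_1\ne0\}$ is nonempty — which is exactly the observation motivating the modified model in section \ref{subsec:globally-hyperbolic-HSWME}.
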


 {
\subsection{A new globally hyperbolic SWME model}\label{subsec:globally-hyperbolic-HSWME}
From the analysis in section \ref{subsec:hyperbolicity-HSWME}, we know that the HSWME model \eqref{eq:HSWME} in 2D is weakly hyperbolic if $\alpha_1=0$ and $\beta_1\neq0$, and strongly hyperbolic otherwise. In this part, we propose a new globally hyperbolic shallow water moment model by modifying the coefficient matrices of the HSWME model \eqref{eq:HSWME}. The new model is hyperbolic for any state $\alpha_1$ and $\beta_1\in\mathbb{R}$.

We notice that in the analysis of in section \ref{subsubsec:alpha1=0}, the weakly hyperbolicity for $\alpha_1=0$ and $\beta_1\ne0$ is due to the fact that the algebraic and geometric multiplicities of the eigenvalue $u_m$ do not coincide. To be more specific, the eigenspace of the eigenvalue $u_m$ depends on the second equation in \eqref{eq:linear-system-eigenvector-alpha1=0}. Therefore, we propose to modify $\tilde{A}_{21}$ in the following way:
\begin{equation}
    \tilde{A}_{21}(W) = 
    \begin{pmatrix}
        -{u_m}{v_m}-\frac{\alpha_1\beta_1}{3} & {v_m} & 0 \\
        -({u_m}\beta_1+{v_m}\alpha_1) & \beta_1 & 0 &  \\
        -\frac{2}{3}\alpha_1\beta_1 & 0 & 0 & \\
        & & & 0_{(N-2)\times(N-1)} 
    \end{pmatrix},
\end{equation}
It is easy to check that the eigenspace for the eigenvalue $u_m$ is now complete in the case of $\alpha_1=0$.

Next, the matrix $\tilde{A}_{22}$ should be modified as well to ensure the rotational invariance. We propose to modify the block $\tilde{A}_{22}$ and the matrix becomes
\begin{equation}\label{eq:coefficient-matrix-HSWME-A22-new}    
\tilde{A}_{22}(W) =
\begin{pmatrix}
    {u_m} & \frac{2}{3}\alpha_1 \\
    \alpha_1 & {u_m} & \frac{3}{5}\alpha_1 \\
    & \frac{1}{3}\alpha_1 & {u_m} & \frac{4}{7}\alpha_1 \\
    & & \ddots & \ddots & \ddots \\
    & & & \frac{N-2}{2N-3}\alpha_1 & {u_m} & \frac{N+1}{2N+1}\alpha_1 \\
    & & & & \frac{N-1}{2N-1}\alpha_1 & {u_m}
\end{pmatrix}.
\end{equation}
It is difficult to derive the closed form of the characteristic polynomial of $\tilde{A}_{22}(W)$ in \eqref{eq:coefficient-matrix-HSWME-A22-new} for arbitrary order of moment $N\ge1$. However, it is easy to see that $\tilde{A}_{22}(W)$ in \eqref{eq:coefficient-matrix-HSWME-A22-new} is real diagonalizable since it is a tridiagonal matrix where each pair of off-diagonal entries has the same sign.
Moreover, we numerically verify that the eigenvalues of $\tilde{A}_{22}(W)$ in \eqref{eq:coefficient-matrix-HSWME-A22-new} are all distinct from those of $\tilde{A}_{11}(W)$ in \eqref{eq:coefficient-matrix-HSWME-A11} at least for $N\le 100$. Therefore, the new model is globally hyperbolic.

Now we write the coefficient matrix of the new model in the original order of variables:
\begin{equation}\label{eq:coefficient-matrix-HSWME-globally-hyperbolic-x}
    A_G = 
    \begin{aligned}
        \begin{pmatrix}
        0 & 1 & 0 \\
        -u_m^2 - \frac{\alpha_1^2}{3} + gh & 2u_m & 0 & \frac{2\alpha_1}{3} & 0 \\
        -u_mv_m - \frac{\alpha_1\beta_1}{3} & v_m & u_m & 0 & \frac{2\alpha_1}{3} \\
        -2u_m\alpha_1 & 2\alpha_1 & 0 & u_m & 0 & \frac{3}{5}\alpha_1 & 0 & \cdots & 0 & 0 \\
        -(u_m\beta_1+v_m\alpha_1) & \beta_1 & \alpha_1 & 0 & u_m & 0 & \frac{3}{5}\alpha_1 & \cdots & 0 & 0 \\
        - \frac{2}{3}\alpha_1^2 & 0 & 0 & \frac{1}{3}\alpha_1 & 0 & u_m & 0 &  \cdots & 0 & 0 \\
        - \frac{2}{3}\alpha_1\beta_1 & 0 & 0 & 0 & \frac{1}{3}\alpha_1 & 0 & u_m & \cdots & 0 & 0 \\
         &  &  &  &  & \ddots & \ddots & \ddots & \frac{N+1}{2N+1}\alpha_1 & 0 \\
         &  &  &  &  & \ddots & \ddots & \ddots & 0 & \frac{N+1}{2N+1}\alpha_1 \\
         &  &  &  &  &  & \frac{N-1}{2N-1}\alpha_1 & 0 & u_m & 0 \\
         &  &  &  &  &  & 0 & \frac{N-1}{2N-1}\alpha_1 & 0 & u_m
        \end{pmatrix}
    \end{aligned}   
\end{equation}
in the $x$ direction and
\begin{equation}\label{eq:coefficient-matrix-HSWME-globally-hyperbolic-y}
    B_G = 
    \begin{aligned}
        \begin{pmatrix}
        0 & 0 & 1 \\
        -u_mv_m - \frac{\alpha_1\beta_1}{3} & v_m & u_m & \frac{2\beta_1}{3} & 0 \\
        -v_m^2 - \frac{\beta_1^2}{3} + gh & 0 & 2v_m & 0 & \frac{2\beta_1}{3} \\
        -(u_m\beta_1+v_m\alpha_1) & \beta_1 & \alpha_1 & v_m & 0 & \frac{3}{5} & 0 \\
        -2v_m\beta_1 & 0 & 2\beta_1 & 0 & v_m & 0 & \frac{3}{5}\beta_1 \\
        - \frac{2}{3}\alpha_1\beta_1 & 0 & 0 & \frac{1}{3}\beta_1 & 0 & v_m & 0 & \cdots & 0 & 0 \\
        - \frac{2}{3}\beta_1^2 & 0 & 0 & 0 & \frac{1}{3}\beta_1 & 0 & v_m &  \cdots & 0 & 0 \\
         &  &  &  &  & \ddots & \ddots & \ddots & \frac{N+1}{2N+1}\beta_1 & 0 \\
         &  &  &  &  & \ddots & \ddots & \ddots & 0 & \frac{N+1}{2N+1}\beta_1 \\
         &  &  &  &  &  & \frac{N-1}{2N-1}\beta_1 & 0 & v_m & 0 \\
         &  &  &  &  & & 0 & \frac{N-1}{2N-1}\beta_1 & 0 & v_m
        \end{pmatrix}
    \end{aligned}   
\end{equation}
in the $y$ direction. We summarize the hyperbolicity of the new shallow water moment model in the following theorem:
\begin{thm}[global hyperbolicity of the new shallow water moment model]\label{thm:globally-hyperbolicity-HSWME-new}
    The new shallow water moment model with the coefficient matrices given by $A_G$ in \eqref{eq:coefficient-matrix-HSWME-globally-hyperbolic-x} and $B_G$ in \eqref{eq:coefficient-matrix-HSWME-globally-hyperbolic-y} is globally hyperbolic for all feasible orders of at least up to $N = 100$.
\end{thm}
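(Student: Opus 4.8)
The plan is to follow the template of the proof of Theorem~\ref{thm:real-diagonalizable-alpha1-ne0}: first show the new model is rotationally invariant, so that by the reduction of Section~\ref{sec:hyperbolic} two-dimensional hyperbolicity is equivalent to real diagonalizability of the one-dimensional matrix; then use the block lower-triangular structure in the reordered variables \eqref{eq:different-order-variable} to split the problem into its two diagonal blocks; and finally handle the coupling between the blocks, which is the only step that cannot be carried out in closed form. For the rotational invariance I would verify that the new model is an instance of Theorem~\ref{thm:general-closure-relation-rotational-invariance}: $A_G$ and $B_G$ in \eqref{eq:coefficient-matrix-HSWME-globally-hyperbolic-x}--\eqref{eq:coefficient-matrix-HSWME-globally-hyperbolic-y} differ from $A_H$, $B_H$ only away from the first row and first column, and each altered $2\times2$ sub-block, read with dummy variables $(p,q)=(\alpha_1,\beta_1)$, is a linear combination of the six generators of Theorem~\ref{thm:rotational-invariance-block-matrix}, hence satisfies the $2\times2$ rotational invariance of Definition~\ref{def:rotational-invariance-block-matrix}. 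Consequently $A_G$ and $B_G$ satisfy \eqref{eq:rotation-invariance}, and it suffices to prove that the reordered matrix $\tilde{A}_G=\begin{pmatrix}\tilde{A}_{11}&0\\ \tilde{A}_{21}&\tilde{A}_{22}\end{pmatrix}$ is real diagonalizable for every admissible state, where $\tilde{A}_{11}$ is still the $1$D HSWME matrix \eqref{eq:coefficient-matrix-HSWME-A11} and $\tilde{A}_{22}$ is the tridiagonal matrix \eqref{eq:coefficient-matrix-HSWME-A22-new}.

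Next I would dispose of the two diagonal blocks. The block $\tilde{A}_{11}$ is unchanged from the HSWME, so when $\alpha_1\ne0$ Corollary~\ref{cor:characteristic-polynomial-A-11} already gives $N+2$ real, pairwise distinct eigenvalues ($u_m\pm\sqrt{gh+\alpha_1^2}$ and the Gauss--Lobatto values $u_m+r_i\alpha_1$), and when $\alpha_1=0$ the direct computation of Section~\ref{subsubsec:alpha1=0} applies; in either case $\tilde{A}_{11}$ is real diagonalizable. The modified block $\tilde{A}_{22}$ in \eqref{eq:coefficient-matrix-HSWME-A22-new} equals $u_mI$ when $\alpha_1=0$, and when $\alpha_1\ne0$ it is tridiagonal with every mirrored pair of off-diagonal entries having product a positive multiple of $\alpha_1^2$; a diagonal similarity then turns it into a real symmetric tridiagonal matrix, so it too is real diagonalizable with real spectrum. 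To pass from the diagonal blocks to $\tilde{A}_G$ I would invoke the elementary fact that a block lower-triangular matrix whose diagonal blocks are real diagonalizable and have disjoint spectra is itself real diagonalizable: if $\sigma(\tilde{A}_{11})\cap\sigma(\tilde{A}_{22})=\emptyset$, the Sylvester equation $X\tilde{A}_{11}-\tilde{A}_{22}X=\tilde{A}_{21}$ has a unique solution and conjugation by $\begin{pmatrix}I&0\\X&I\end{pmatrix}$ block-diagonalizes $\tilde{A}_G$.

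It therefore remains to control $\sigma(\tilde{A}_{11})\cap\sigma(\tilde{A}_{22})$. When $\alpha_1=0$ the two blocks do share the eigenvalue $u_m$, so the Sylvester shortcut is unavailable; here I would argue directly, repeating the eigenspace computation of Section~\ref{subsubsec:alpha1=0} but now with the modified $\tilde{A}_{21}$ whose second block row no longer carries the $\beta_1$ entries that destroyed completeness in the original HSWME, to conclude that the eigenspace of $u_m$ has the full dimension $2N+1$, which already gives real diagonalizability. When $\alpha_1\ne0$, after rescaling by $\xi=(x-u_m)/\alpha_1$ the eigenvalues of $\tilde{A}_{11}$ are $\pm\sqrt{gh/\alpha_1^2+1}$, which lie strictly outside $[-1,1]$, together with the roots of $P'_{N+1}$ in $(-1,1)$, while those of $\tilde{A}_{22}$ are the roots of the characteristic polynomial of the rescaled \eqref{eq:coefficient-matrix-HSWME-A22-new}, so only the interior roots of the two families need to be separated. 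The main obstacle is precisely here: unlike the original block \eqref{eq:coefficient-matrix-HSWME-A22}, whose eigenvalues were exactly the Gauss--Legendre nodes (Corollary~\ref{cor:characteristic-polynomial-A-22}), the modified tridiagonal block \eqref{eq:coefficient-matrix-HSWME-A22-new} has no Legendre-type closed form, so I cannot prove the disjointness uniformly in $N$. Consistent with the statement, I would instead certify $\sigma(\tilde{A}_{11})\cap\sigma(\tilde{A}_{22})=\emptyset$ numerically for $1\le N\le100$, which together with the two diagonalizability facts establishes global hyperbolicity of $A_G$, $B_G$ in that range; removing the caveat would require a structural separation of the Gauss--Lobatto-type roots of $P'_{N+1}$ from the roots of \eqref{eq:coefficient-matrix-HSWME-A22-new} valid for all $N$, which is the genuinely hard part.
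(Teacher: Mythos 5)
Your proposal is correct and matches the paper's approach: rotational invariance via Theorem~\ref{thm:general-closure-relation-rotational-invariance} reduces 2D hyperbolicity to real diagonalizability of the 1D block-lower-triangular matrix, the unchanged $\tilde{A}_{11}$ and the modified tridiagonal $\tilde{A}_{22}$ (with mirrored off-diagonal pairs of the same sign, hence symmetrizable by a diagonal similarity) are handled exactly as in Section~\ref{subsec:globally-hyperbolic-HSWME}, the $\alpha_1=0$ degenerate case is fixed by the modified $\tilde{A}_{21}$, and disjointness of spectra for $\alpha_1\ne0$ is verified numerically for $N\le 100$. Your only addition is making the block-diagonalization step explicit via the Sylvester equation, which the paper leaves implicit.
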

}

\subsection{Hyperbolicity of the $\beta$-HSWME}\label{subsec:hyperbolicity-beta-HSWME}

In \cite{koellermeier2020analysis}, a new version of shallow water moment model, called the $\beta$-HSWME, is proposed by modifying the last row of the coefficient matrix, so that predefined propagation speeds can be obtained.

The coefficient matrix of the $\beta$-HSWME \cite{koellermeier2020analysis} reads as:
\begin{equation}\label{eq:coefficient-matrix-HSWME-A11-beta}
    \tilde{A}_{\beta,11}(W) = 
    \begin{pmatrix}
    0 & 1 \\
    gh -  {u_m}^2 - \frac{1}{3}\alpha_1^2 & 2 {u_m} & \frac{2}{3}\alpha_1 \\
    -2 {u_m}\alpha_1 & 2\alpha_1 &  {u_m} & \frac{3}{5}\alpha_1 \\
    -\frac{2}{3}\alpha_1^2 & 0 & \frac{1}{3}\alpha_1 &  {u_m} & \frac{4}{7}\alpha_1 \\
    & & & \ddots & \ddots & \ddots \\
    & & & & \frac{N-2}{2N-3}\alpha_1 &  {u_m} & \frac{N+1}{2N+1}\alpha_1 \\
    & & & & & \frac{(N-1)(2N+1)}{(N+1)(2N-1)}\alpha_1 &  {u_m}
    \end{pmatrix}.
\end{equation}
In Theorem 3.5 in \cite{koellermeier2020analysis}, it was shown that  the characteristic polynomial of $\tilde{A}_{\beta,11}$ in \eqref{eq:coefficient-matrix-HSWME-A11-beta} is related to the Legendre polynomial by numerical computations up to order $N=100$. Here, we will prove that this holds true for any $N\ge 1$:
\begin{lem}[characteristic polynomial of $\tilde{A}_{\beta,11}$  {in the case of $\alpha_1\ne0$}]\label{lem:characteristic-polynomial-A-beta-11}
    The characteristic polynomial of $\tilde{A}_{\beta,11}$ in \eqref{eq:coefficient-matrix-HSWME-A11-beta} is
    \begin{equation}
        \det(\lambda I - \tilde{A}_{\beta, 11}) = \frac{N!}{(2N-1)!!} \alpha_1^N P_{N}\brac{\frac{x- {u_m}}{\alpha_1}} \brac{(x- {u_m})^2 - gh - \alpha_1^2}.
    \end{equation}  
\end{lem}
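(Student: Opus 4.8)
The plan is to reuse, almost verbatim, the associated-polynomial-sequence computation from the proof of the characteristic polynomial of $\tilde{A}_{11}$ in \eqref{eq:coefficient-matrix-HSWME-A11}. The key observation is that $\tilde{A}_{\beta,11}$ in \eqref{eq:coefficient-matrix-HSWME-A11-beta} differs from $\tilde{A}_{11}$ only in the single subdiagonal entry of its last (the $(N+2)$-th) row, which is changed from $\frac{N-1}{2N-1}\alpha_1$ to $\frac{(N-1)(2N+1)}{(N+1)(2N-1)}\alpha_1$. Since the recurrence \eqref{eq:recurrence-polynomial} for $q_n$ only reads off the entries of the $n$-th row of the matrix (with the convention $a_{n,n+1}:=1$ invoked only at the final step $n=N+2$), the polynomials $q_0,q_1,\dots,q_{N+1}$ are identical to those already computed for $\tilde{A}_{11}$. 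In particular, writing $\xi=\frac{x-u_m}{\alpha_1}$ and $p_g(x)=(x-u_m)^2-gh-\alpha_1^2$, formula \eqref{eq:associated-polynomial-A11} gives
\[
q_N(x)=\frac{2N-1}{N(N-1)\alpha_1}P_{N-1}'(\xi)\,p_g(x),\qquad
q_{N+1}(x)=\frac{2N+1}{N(N+1)\alpha_1}P_{N}'(\xi)\,p_g(x).
\]

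Next I would compute the only genuinely new polynomial, $q_{N+2}$. Using $a_{N+2,N+3}:=1$, $a_{N+2,N+2}=u_m$, and the modified $a_{N+2,N+1}=\frac{(N-1)(2N+1)}{(N+1)(2N-1)}\alpha_1$, the recurrence gives $q_{N+2}(x)=(x-u_m)q_{N+1}(x)-a_{N+2,N+1}q_N(x)$; substituting the two displayed expressions and simplifying the constants (the coefficient of $q_N$ collapses to $\frac{2N+1}{N(N+1)}$, exactly matching the leading constant produced by $(x-u_m)q_{N+1}$) yields
\[
q_{N+2}(x)=\frac{2N+1}{N(N+1)}\,p_g(x)\bigl(\xi P_N'(\xi)-P_{N-1}'(\xi)\bigr).
\]
The one non-routine ingredient is the classical Legendre identity $\xi P_N'(\xi)-P_{N-1}'(\xi)=N P_N(\xi)$ (the companion of the identity $(2k-1)xP_{k-1}'=(k-1)P_k'+kP_{k-2}'$ already used for $\tilde{A}_{11}$), which reduces the above to $q_{N+2}(x)=\frac{2N+1}{N+1}P_N(\xi)\,p_g(x)$.

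Finally I would invoke part 2 of Theorem \ref{thm:root-is-eigenvalue}. The roots of $q_{N+2}$ are $x=u_m+s_i\alpha_1$ for the $N$ zeros $s_i\in(-1,1)$ of $P_N$, together with $x=u_m\pm\sqrt{gh+\alpha_1^2}$; since $gh>0$ all $N+2$ roots are real, and because $|s_i\alpha_1|<|\alpha_1|<\sqrt{gh+\alpha_1^2}$ the two families are disjoint while the zeros of $P_N$ are simple, so all roots of $q_{N+2}$ are simple. Hence $\det(\lambda I-\tilde{A}_{\beta,11})=\rho\,q_{N+2}(x)$ with $\rho=\prod_{i=1}^{N+1}a_{i,i+1}=\prod_{i=2}^{N+1}\frac{i}{2i-1}\,\alpha_1^N=\frac{(N+1)!}{(2N+1)!!}\alpha_1^N$ (the superdiagonal is unchanged, so this is the same $\rho$ as for $\tilde{A}_{11}$). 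Multiplying out and using $(2N+1)!!=(2N+1)(2N-1)!!$ gives $\det(\lambda I-\tilde{A}_{\beta,11})=\frac{N!}{(2N-1)!!}\alpha_1^N P_N(\xi)\,p_g(x)$, as claimed.

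The computations are elementary; the only places requiring care are the justification that $q_0,\dots,q_{N+1}$ are unchanged (which rests solely on the local, row-by-row nature of the recurrence \eqref{eq:recurrence-polynomial}) and picking out the correct Legendre differentiation identity to collapse $\xi P_N'-P_{N-1}'$. I do not anticipate a genuine obstacle beyond these bookkeeping points.
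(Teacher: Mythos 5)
Your proof is correct and follows essentially the same route as the paper: observe that only the last row changes, so $q_0,\dots,q_{N+1}$ are inherited from the $\tilde{A}_{11}$ analysis, recompute only $q_{N+2}$ using the modified $a_{N+2,N+1}$, collapse $\xi P_N'(\xi)-P_{N-1}'(\xi)=N P_N(\xi)$, and multiply by $\rho=\frac{(N+1)!}{(2N+1)!!}\alpha_1^N$. Your explicit verification that the $N+2$ roots of $q_{N+2}$ are real and simple (so that part~2 of Theorem~\ref{thm:root-is-eigenvalue} applies) is a small detail the paper leaves implicit, but it is correct and tightens the argument.
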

\begin{proof}
    Notice that the matrix $\tilde{A}_{\beta,11}$ in \eqref{eq:coefficient-matrix-HSWME-A11-beta} only differs from $\tilde{A}_{11}$ in \eqref{eq:coefficient-matrix-HSWME-A11} in the last row. Thus, the associated polynomial sequences for two matrices are the same for $q_i$ with $0\le i\le N+1$. We only need to compute $q_{N+2}(x)$:
    \begin{equation*}
    \begin{aligned}            
        q_{N+2}(x) &= \frac{1}{a_{N+2,N+3}}(xq_{N+1}(x) - \sum_{j=1}^{N+2} a_{N+2,j}q_{j-1}(x)) \\
        &= xq_{N+1}(x) - (a_{N+2,N+1}q_N(x) + a_{N+2,N+2}q_{N+1}(x)) \\
        &= (x- {u_m}) q_{N+1}(x) - \frac{(N-1)(2N+1)}{(N+1)(2N-1)} q_N(x) \\
        &= (x- {u_m}) \frac{2N+1}{N(N+1)\alpha_1} P'_{N}(\xi)p_g(x) - \frac{(N-1)(2N+1)}{(N+1)(2N-1)} \frac{2N-1}{N(N-1)\alpha_1} P'_{N-1}(\xi)p_g(x) \\
        &= \frac{2N+1}{N(N+1)} \brac{\xi P'_{N}(\xi) - P'_{N-1}(\xi)}p_g(x) \\
        &= \frac{2N+1}{N(N+1)} N P_N(\xi) p_g(x) \\
        &= \frac{2N+1}{N+1} P_{N}(\xi) p_g(x).
    \end{aligned}
    \end{equation*}
    Here we denote $\xi:=\frac{x- {u_m}}{\alpha_1}$ and $p_g(x):=(x- {u_m})^2-gh-\alpha_1^2$ and use the relation $\xi P'_{n}(\xi) - P'_{n-1}(\xi) = n P_{n}(\xi)$.
    
    Therefore, the characteristic polynomial of $\tilde{A}_{\beta,11}$ is
    \begin{equation*}
        \det(\lambda I - \tilde{A}_{\beta, 11}) = \frac{(N+1)!}{(2N+1)!!}\alpha_1^N \frac{2N+1}{N+1} P_{N}(\xi) p_g(x) = \frac{N!}{(2N-1)!!} \alpha_1^N P_{N}(\xi) p_g(x).
    \end{equation*}
\end{proof}

With the matrix $\tilde{A}_{\beta,11}$ at hand, there is still some degree of freedom to choose the matrix $\tilde{A}_{\beta,22}$ to make the matrix $\tilde{A}_{\beta}$ hyperbolic. One simple choice is to keep $\tilde{A}_{22}$ unchanged. In this case, the corresponding matrix in $x$ direction is
\begin{equation*}
    \tilde{A}_{\beta} = 
    \begin{pmatrix}
    \tilde{A}_{\beta,11} & 0 \\
    \tilde{A}_{\beta,21} & \tilde{A}_{22}
    \end{pmatrix},
\end{equation*}
where $\tilde{A}_{\beta,21}$ is determined by the rotational invariance constraint given in Theorem \ref{thm:rotational-invariance-block-matrix}:
\begin{equation}\label{eq:coefficient-matrix-HSWME-beta-A21}
    \tilde{A}_{\beta,21}(W) = 
    \begin{pmatrix}
        - {u_m} {v_m}-\frac{\alpha_1\beta_1}{3} &  {v_m} & \frac{\beta_1}{3} \\
        -( {u_m}\beta_1+ {v_m}\alpha_1) & \beta_1 & 0 & \frac{1}{5}\beta_1 \\
        -\frac{2}{3}\alpha_1\beta_1 & 0 & -\frac{1}{3}\beta_1 & 0 & \frac{1}{7}\beta_1 \\
        & & & \ddots & \ddots & \ddots \\
        & & & & -\frac{1}{2N-3}\beta_1 & 0 & -\frac{1}{2N+1}\beta_1 \\
        & & & & & \frac{N^2-2N-1}{(2N-1)(N+1)}\beta_1 & 0 
    \end{pmatrix}.
\end{equation}

We can also write the coefficient matrices in the original order of variables:
\begin{equation}\label{eq:coefficient-matrix-beta-x}
    A_{\beta} = 
    \begin{aligned}
        \begin{pmatrix}
        0 & 1 & 0 \\
        - {u_m}^2 - \frac{\alpha_1^2}{3} + gh & 2 {u_m} & 0 & \frac{2\alpha_1}{3} & 0 \\
        - {u_m} {v_m} - \frac{\alpha_1\beta_1}{3} &  {v_m} &  {u_m} & \frac{\beta_1}{3} & \frac{\alpha_1}{3} \\
        -2 {u_m}\alpha_1 & 2\alpha_1 & 0 &  {u_m} & 0 & \frac{3}{5}\alpha_1 & 0 & \cdots & 0 & 0 \\
        -( {u_m}\beta_1+ {v_m}\alpha_1) & \beta_1 & \alpha_1 & 0 &  {u_m} & \frac{1}{5}\beta_1 & \frac{2}{5}\alpha_1 & \cdots & 0 & 0 \\
        - \frac{2}{3}\alpha_1^2 & 0 & 0 & \frac{1}{3}\alpha_1 & 0 &  {u_m} & 0 &  \cdots & 0 & 0 \\
        - \frac{2}{3}\alpha_1\beta_1 & 0 & 0 & -\frac{1}{3}\beta_1 & \frac{2}{3}\alpha_1 & 0 &  {u_m} & \cdots & 0 & 0 \\
         &  &  &  &  & \ddots & \ddots & \ddots & \frac{N+1}{2N+1}\alpha_1 & 0 \\
         &  &  &  &  & \ddots & \ddots & \ddots & \frac{1}{2N+1}\beta_1 & \frac{N}{2N+1}\alpha_1 \\
         &  &  &  &  &  & \frac{(N-1)(2N+1)}{(2N-1)(N+1)}\alpha_1 & 0 &  {u_m} & 0 \\
         &  &  &  &  &  & \frac{N^2-2N-1}{(2N-1)(N+1)}\beta_1 & \frac{N}{2N-1}\alpha_1 & 0 &  {u_m}
        \end{pmatrix}
    \end{aligned}   
\end{equation}
in the $x$ direction and
\begin{equation}\label{eq:coefficient-matrix-beta-y}
    B_{\beta} = 
    \begin{aligned}
        \begin{pmatrix}
        0 & 0 & 1 \\
        - {u_m} {v_m} - \frac{\alpha_1\beta_1}{3} &  {v_m} &  {u_m} & \frac{\beta_1}{3} & \frac{\alpha_1}{3} \\
        - {v_m}^2 - \frac{\beta_1^2}{3} + gh & 0 & 2 {v_m} & 0 & \frac{2\beta_1}{3} \\
        -( {u_m}\beta_1+ {v_m}\alpha_1) & \beta_1 & \alpha_1 &  {v_m} & 0 & \frac{2}{5}\beta_1 & \frac{1}{5}\alpha_1 \\
        -2 {v_m}\beta_1 & 0 & 2\beta_1 & 0 &  {v_m} & 0 & \frac{3}{5}\beta_1 \\
        - \frac{2}{3}\alpha_1\beta_1 & 0 & 0 & \frac{2}{3}\beta_1 & -\frac{1}{3}\alpha_1 &  {v_m} & 0 & \cdots & 0 & 0 \\
        - \frac{2}{3}\beta_1^2 & 0 & 0 & 0 & \frac{1}{3}\beta_1 & 0 &  {v_m} &  \cdots & 0 & 0 \\
         &  &  &  &  & \ddots & \ddots & \ddots & \frac{N}{2N+1}\beta_1 & \frac{1}{2N+1}\alpha_1 \\
         &  &  &  &  & \ddots & \ddots & \ddots & 0 & \frac{N+1}{2N+1}\beta_1 \\
         &  &  &  &  &  & \frac{N}{2N-1}\beta_1 & \frac{N^2-2N-1}{(2N-1)(N+1)}\alpha_1 &  {v_m} & 0 \\
         &  &  &  &  & & 0 & \frac{(N-1)(2N+1)}{(2N-1)(N+1)}\beta_1 & 0 &  {v_m}
        \end{pmatrix}
    \end{aligned}   
\end{equation}
in the $y$ direction.

Since $P_N(\xi)$ in Lemma \ref{lem:characteristic-polynomial-A-beta-11} and $P_{N+1}(\xi)$ in Corollary \ref{cor:characteristic-polynomial-A-22} both have real and distinct roots and there are no common roots due to the interlacing property of zeros of orthogonal polynomials in the case of $\alpha_1\ne0$, we have that the coefficient matrix  \eqref{eq:coefficient-matrix-beta-x} in the $x$ direction has real and distinct roots and thus real diagonalizable. Moreover, the analysis of the model in the case of $\alpha_1=0$ is similar to the one in section \ref{subsubsec:alpha1=0}. Therefore, combining with the rotational invariance in Theorem \ref{thm:general-closure-relation-rotational-invariance}, we immediately have the following result: 
\begin{thm}[hyperbolicity of the $\beta$-HSWME]
    The $\beta$-HSWME model in 2D with the coefficient matrices given by \eqref{eq:coefficient-matrix-beta-x} in $x$ direction and \eqref{eq:coefficient-matrix-beta-y} in $y$ direction is hyperbolic if $\alpha_1\ne0$ or $\alpha_1=\beta_1=0$ and weakly hyperbolic if $\alpha_1=0$ and $\beta_1\ne0$.
\end{thm}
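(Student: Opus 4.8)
The plan is to follow the same two-stage strategy already used for the HSWME: first invoke rotational invariance to reduce the two-dimensional question to the real diagonalizability of the $x$-direction matrix, then analyze that matrix through its block lower-triangular structure together with the characteristic polynomials already derived.

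\emph{Step 1: reduction by rotational invariance.} The sub-blocks of $A_\beta$ in \eqref{eq:coefficient-matrix-beta-x} and of $B_\beta$ in \eqref{eq:coefficient-matrix-beta-y} were constructed precisely so that each $2\times2$ block satisfies the rotational invariance of Definition \ref{def:rotational-invariance-block-matrix} (this dictated the form of $\tilde{A}_{\beta,21}$ in \eqref{eq:coefficient-matrix-HSWME-beta-A21}). Hence Theorem \ref{thm:general-closure-relation-rotational-invariance} applies and gives $\ct\,A_\beta + \st\,B_\beta = T^{-1}A_\beta(TU)T$, so, exactly as at the start of Section \ref{sec:hyperbolic}, hyperbolicity of the 2D system is equivalent to real diagonalizability of $A_\beta$. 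After the reordering \eqref{eq:different-order-variable} this is the real diagonalizability of the block lower-triangular matrix $\tilde{A}_\beta$ with diagonal blocks $\tilde{A}_{\beta,11}$ and $\tilde{A}_{22}$.

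\emph{Step 2: the case $\alpha_1\ne0$.} Being block lower-triangular, $\tilde{A}_\beta$ has spectrum equal to the union of the spectra of $\tilde{A}_{\beta,11}$ and $\tilde{A}_{22}$. By Lemma \ref{lem:characteristic-polynomial-A-beta-11} the first block contributes $u_m\pm\sqrt{gh+\alpha_1^2}$ together with $u_m+t_i\alpha_1$ for $i=1,\dots,N$, where $t_i$ are the real simple zeros of $P_N$; by Corollary \ref{cor:characteristic-polynomial-A-22} the second block contributes $u_m+s_i\alpha_1$ for $i=1,\dots,N+1$, where $s_i$ are the real simple zeros of $P_{N+1}$. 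All zeros of $P_N$ and $P_{N+1}$ lie in $(-1,1)$, so $\abs{t_i\alpha_1},\abs{s_i\alpha_1}<\abs{\alpha_1}<\sqrt{gh+\alpha_1^2}$ since $gh>0$, which separates $u_m\pm\sqrt{gh+\alpha_1^2}$ from all the other eigenvalues; and $P_N$, $P_{N+1}$ have no common zero by the strict interlacing of zeros of consecutive orthogonal polynomials. Therefore $\tilde{A}_\beta$ has $2N+3$ distinct real eigenvalues, is real diagonalizable, and the $\beta$-HSWME is hyperbolic.

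\emph{Step 3: the case $\alpha_1=0$, and the main obstacle.} Setting $\alpha_1=0$ collapses $\tilde{A}_{\beta,11}$ to the same form as in Section \ref{subsubsec:alpha1=0} (a nontrivial $2\times2$ shallow-water block plus $u_m I_N$) and $\tilde{A}_{22}$ to $u_m I_{N+1}$, so the characteristic polynomial is $\brac{(\lambda-u_m)^2-gh}(\lambda-u_m)^{2N+1}$; the eigenvalues $u_m\pm\sqrt{gh}$ are simple, and only the eigenvalue $u_m$ needs inspection. Solving $(u_m I-\tilde{A}_\beta)q=0$ splits as in \eqref{eq:linear-system-eigenvector-alpha1=0}: the first block forces $q_{1,1}=q_{1,2}=0$, and the second block, now driven by $\tilde{A}_{\beta,21}$ from \eqref{eq:coefficient-matrix-HSWME-beta-A21}, reduces to a system identical to \eqref{eq:linear-system-eigenvector-alpha1=0-2-reduced} except in its last two rows. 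If $\beta_1=0$ this system is vacuous, the eigenspace of $u_m$ has dimension $2N+1$, and the model is hyperbolic; if $\beta_1\ne0$ the staircase structure forces $q_{1,i}=0$ for every $i$ — here one uses that the modified entries, in particular $\tfrac{N^2-2N-1}{(2N-1)(N+1)}$, are nonzero for all integers $N\ge1$ — so the eigenspace has dimension $N+1<2N+1$ and the model is only weakly hyperbolic. The only step needing genuine care is this one: one must redo the eigenvector bookkeeping for the $\beta$-modified matrix, the point being that the altered coefficients in the last two rows of $\tilde{A}_{\beta,11}$ and $\tilde{A}_{\beta,21}$ remain nonzero, so neither the chain of implications forcing $q_1=0$ (when $\beta_1\ne0$) nor the rank count (when $\beta_1=0$) changes; everything else is a verbatim repeat of the arguments in Sections \ref{subsec:hyperbolicity-HSWME} and \ref{subsec:alternative-proof-rotation-invariance}.
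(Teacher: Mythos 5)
Your proposal is correct and follows essentially the same route as the paper: the paper's (very terse) proof also reduces to one dimension via Theorem \ref{thm:general-closure-relation-rotational-invariance}, uses the interlacing of the zeros of $P_N$ and $P_{N+1}$ from Lemma \ref{lem:characteristic-polynomial-A-beta-11} and Corollary \ref{cor:characteristic-polynomial-A-22} for $\alpha_1\ne0$, and refers back to Section \ref{subsubsec:alpha1=0} for the degenerate case; you have merely spelled out the details the paper elides, such as why $u_m\pm\sqrt{gh+\alpha_1^2}$ cannot coincide with the Legendre eigenvalues. One small over-claim: in your Step 3 the nonvanishing of $\tfrac{N^2-2N-1}{(2N-1)(N+1)}$ is not actually load-bearing, since $q_{1,N+1}=0$ is already forced by the cascade from row $N-1$ (which couples $q_{1,N-1}$ to $q_{1,N+1}$), so row $N+1$ is a redundant consistency check regardless of the value of that coefficient.
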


\begin{rem}
    In the above procedure, we take the simple choice to keep $\tilde{A}_{22}$ unchanged in $\tilde{A}_{\beta}$. However, one may also modify $\tilde{A}_{22}$ to make the matrix $\tilde{A}_{\beta}$ hyperbolic following the framework in the next section. This will result in a different form of the $\beta$-HSWME model in 2D.
\end{rem}

\subsection{A framework for constructing general closure relations with rotational invariance and hyperbolicity}\label{subsec:framework-general-closure}

Besides the previous hyperbolic shallow water moment models, we can also modify both $\tilde{A}_{11}$ and $\tilde{A}_{22}$ in \eqref{eq:matrix-A-another-order-variable}, as long as each of them has real and distinct eigenvalues and they have no common eigenvalues. In this case, the real  diagonalizability of the matrix $\tilde{A}$ in \eqref{eq:matrix-A-another-order-variable} is guaranteed. Here, we can borrow the idea from \cite{koellermeier2020analysis} to modify the entries in the last row of the matrix such that the modified matrix has predefined propagation speeds. Since only the last row is modified, the associated polynomial sequence remains unchanged except for the last one. Therefore, it is easy to derive the analytical form of the characteristic polynomial where the coefficients have a linear dependence on the entries in the last row. Next, by matching the coefficients using Vieta's formulas which relate the coefficients of a polynomial to sums and products of its roots, or using the appropriate recurrence relation, the entries in the last row can be solved analytically. Similar ideas are also applied in machine learning moment closures for radiative transfer equation \cite{huang2023machine3} where the roots are represented by the neural networks.

After the modified $\tilde{A}_{11}$ and $\tilde{A}_{22}$ are obtained, the next step is to determine the form of $\tilde{A}_{21}$ by the rotational invariance constraint in Theorem \ref{thm:rotational-invariance-block-matrix}. The coefficient matrix $B$ in the $y$ direction can be derived by this constraint as well. Then we have a moment model in 2D with provable rotational invariance and hyperbolicity.

 {
We note that the corner case of the weakly hyperbolicity with $\alpha_1=0$ should be addressed separately in this framework. The algebraic multiplicities of the eigenvalue should be equal to the geometric multiplicities and this has to be checked by analyzing the eigenvectors of the coefficient matrix, as shown in section \ref{subsubsec:alpha1=0}. 
}

\subsection{An example of constructing a general closure}\label{subsec:example-general-closure}

To illustrate the framework in the previous part, we show an example by modifying the entries in the last row of $\tilde{A}_{22}$ in \eqref{eq:coefficient-matrix-HSWME-A22} such that its characteristic polynomial is related to the derivative of the Legendre polynomial.
Since only the last row is modified, the associated polynomial sequence given in Lemma \ref{lem:characteristic-polynomial-A-22} will not be changed except the last one. Therefore, we compute the last associated polynomial:
\begin{equation*}
\begin{aligned}
    q_{N+1}(x) ={}& xq_N(x) - \sum_{j=1}^{N+1}a_{N+1,j} q_{j-1}(x) \\
    ={}& (x-a_{N+1,N+1})q_N(x) - \sum_{j=1}^{N}a_{N+1,j} q_{j-1}(x) \\
    ={}& ((x- {u_m}) - (a_{N+1,N+1}- {u_m}))(2N+1)P_N(\xi) - \sum_{j=1}^{N}a_{N+1,j} (2j-1)P_{j-1}(\xi) \\
    ={}& \xi\alpha_1(2N+1)P_N(\xi) - (a_{N+1,N+1}- {u_m})(2N+1)P_N(\xi) - \sum_{j=1}^{N}a_{N+1,j} (2j-1)P_{j-1}(\xi) \\
    ={}& \alpha_1(N P_{N-1}(\xi) + (N+1) P_{N+1}(\xi)) - (a_{N+1,N+1}- {u_m})(2N+1)P_N(\xi) - \sum_{j=1}^{N}a_{N+1,j} (2j-1)P_{j-1}(\xi) \\
    ={}& \alpha_1(N+1)P_{N+1}(\xi) -(a_{N+1,N+1}- {u_m})(2N+1)P_N(\xi) + (\alpha_1N - a_{N+1,N}(2N-1)) P_{N-1}(\xi) \\
    & - \sum_{j=1}^{N-1}a_{N+1,j} (2j-1)P_{j-1}(\xi),
\end{aligned}
\end{equation*}
where we use the recursion relation $(2n+1) \xi P_n(\xi) = n P_{n-1}(\xi) + (n+1) P_{n+1}(\xi)$.

Next, we would like to take appropriate values of $a_{N+1,j}$ for $1\le j\le N+1$ such that $q_{N+1}(x)$ is proporional to $P_{N+2}'(\xi)$. We use the relation
\begin{equation}
    P_{N+2}'(\xi) = (2N+3)P_{N+1}(\xi) + (2N-1)P_{N-1}(\xi) + (2N-5)P_{N-3}(\xi) + \cdots.
\end{equation}
Matching the coefficient of $P_{N+1}(\xi)$ in $P'_{N+2}(\xi)$ and $q_{N+1}(x)$, we have 
\begin{equation*}
    q_{N+1}(x) = \alpha_1 \frac{N+1}{2N+3} P_{N+2}'(\xi),
\end{equation*}
which can be expanded as
\begin{equation*}
\begin{aligned}
    & \alpha_1(N+1)P_{N+1}(\xi) -(a_{N+1,N+1}-u)(2N+1)P_N(\xi) + (\alpha_1N - a_{N+1,N}(2N-1)) P_{N-1}(\xi) \\
    & - \sum_{j=1}^{N-1}a_{N+1,j} (2j-1)P_{j-1}(\xi) \\
    ={}& \alpha_1(N+1)P_{N+1}(\xi) + \alpha_1\frac{N+1}{2N+3}(2N-1)P_{N-1}(\xi) + \alpha_1\frac{N+1}{2N+3}(2N-5)P_{N-3}(\xi) + \cdots.
\end{aligned}    
\end{equation*}

Now we match the remaining coefficients in the above equation. For the coefficient of $P_N(\xi)$, we have
\begin{equation*}
    a_{N+1,N+1} =  {u_m}.
\end{equation*}
For the coefficient of $P_{N-1}(\xi)$, we have
\begin{equation*}
    \alpha_1 N - a_{N+1,N}(2N-1) = \alpha_1 \frac{N+1}{2N+3} (2N-1),
\end{equation*}
from which we solve for $a_{N+1,N}$ and obtain
\begin{equation*}
    a_{N+1,N} = \frac{2N+1}{(2N-1)(2N+3)}\alpha_1.
\end{equation*}
For the remaining entries, it is easy to obtain:
\begin{equation}
    a_{N+1,j} = \left \{
    \begin{aligned}        
    & -\frac{N+1}{2N+3}\alpha_1, \quad & \textrm{if} \;\; j\equiv N \;\mathrm{mod}\; 2, \\
    & 0, \quad &\textrm{otherwise}.
    \end{aligned}
    \right. 
\end{equation}
for $1 \le j \le N-1$.

Therefore, the modified $\tilde{A}_{22}$ is
\begin{equation}\label{eq:coefficient-matrix-HSWME-modify-A22}
    \tilde{A}_{22}(W) =
    \begin{pmatrix}
         {u_m} & \frac{\alpha_1}{3} \\
        \alpha_1 &  {u_m} & \frac{2\alpha_1}{5} \\
        & \frac{2}{3}\alpha_1 &  {u_m} & \frac{3\alpha_1}{7} \\
        & & & & \ddots & \ddots & \ddots \\
        & & & & & \frac{N-1}{2N-3}\alpha_1 &  {u_m} & \frac{N}{2N+1}\alpha_1 \\
        \cdots & 0 & -\frac{N+1}{2N+3}\alpha_1 & 0 & -\frac{N+1}{2N+3}\alpha_1 & 0 & \frac{2N+1}{(2N-1)(2N+3)}\alpha_1 &  {u_m}
    \end{pmatrix}.
\end{equation}

Next, we keep $\tilde{A}_{11}$ in \eqref{eq:coefficient-matrix-HSWME-A11} unchanged and write down $\tilde{A}_{21}$ based on the rotational invariance constraint given in Theorem \ref{thm:rotational-invariance-block-matrix}.
We can also write the coefficient matrices in the original order of variables:
\begin{equation}\label{eq:coefficient-matrix-A-general-closure}
    A = 
    \begin{aligned}
        \begin{pmatrix}
        0 & 1 & 0 \\
        - {u_m}^2 - \frac{\alpha_1^2}{3} + gh & 2 {u_m} & 0 & \frac{2\alpha_1}{3} & 0 \\
        - {u_m} {v_m} - \frac{\alpha_1\beta_1}{3} & v &  {u_m} & \frac{\beta_1}{3} & \frac{\alpha_1}{3} \\
        -2 {u_m}\alpha_1 & 2\alpha_1 & 0 &  {u_m} & 0 & \frac{3\alpha_1}{5} & 0 & \cdots & 0 & 0 \\
        -( {u_m}\beta_1+v\alpha_1) & \beta_1 & \alpha_1 & 0 &  {u_m} & \frac{\beta_1}{5} & \frac{2\alpha_1}{5} & \cdots & 0 & 0 \\
        - \frac{2}{3}\alpha_1^2 & 0 & 0 & \frac{\alpha_1}{3} & 0 &  {u_m} & 0 &  \cdots & 0 & 0 \\
        - \frac{2}{3}\alpha_1\beta_1 & 0 & 0 & -\frac{\beta_1}{3} & \frac{2\alpha_1}{3} & 0 &  {u_m} & \cdots & 0 & 0 \\
         &  &  &  &  & \ddots & \ddots & \ddots & \frac{(N+1)\alpha_1}{2N+1} & 0 \\
         &  &  &  &  & \ddots & \ddots & \ddots & \frac{\beta_1}{2N+1} & \frac{N\alpha_1}{2N+1} \\
         \cdots & \cdots & 0 & 0 & 0 & 0 & \frac{(N-1)\alpha_1}{2N-1} & 0 &  {u_m} & 0 \\
         \cdots & \cdots & \frac{(N+1)\beta_1}{2N+3} & -\frac{(N+1)\alpha_1}{2N+3} & 0 & 0 & \frac{(2N^2-N-4)\beta_1}{(2N-1)(N+1)} & \frac{(2N+1)\alpha_1}{(2N-1)(2N+3)} & 0 &  {u_m}
        \end{pmatrix}
    \end{aligned}   
\end{equation}
in the $x$ direction
and
\begin{equation}\label{eq:coefficient-matrix-B-general-closure}
    B = 
    \begin{aligned}
        \begin{pmatrix}
        0 & 0 & 1 \\
        - {u_m} {v_m} - \frac{\alpha_1\beta_1}{3} &  {v_m} &  {u_m} & \frac{\beta_1}{3} & \frac{\alpha_1}{3} \\
        - {v_m}^2 - \frac{\beta_1^2}{3} + gh & 0 & 2 {v_m} & 0 & \frac{2\beta_1}{3} \\
        -( {u_m}\beta_1+ {v_m}\alpha_1) & \beta_1 & \alpha_1 &  {v_m} & 0 & \frac{2\beta_1}{5} & \frac{\alpha_1}{5} \\
        -2 {v_m}\beta_1 & 0 & 2\beta_1 & 0 &  {v_m} & 0 & \frac{3\beta_1}{5} \\
        - \frac{2}{3}\alpha_1\beta_1 & 0 & 0 & \frac{2\beta_1}{3} & -\frac{\alpha_1}{3} &  {v_m} & 0 & \cdots & 0 & 0 \\
        - \frac{2}{3}\beta_1^2 & 0 & 0 & 0 & \frac{\beta_1}{3} & 0 &  {v_m} &  \cdots & 0 & 0 \\
         &  &  &  &  & \ddots & \ddots & \ddots & \frac{N}{2N+1}\beta_1 & \frac{1}{2N+1}\alpha_1 \\
         &  &  &  &  & \ddots & \ddots & \ddots & 0 & \frac{N+1}{2N+1}\beta_1 \\
         \cdots & \cdots & -\frac{(N+1)\beta_1}{2N+3} & \frac{(N+1)\alpha_1}{2N+3} & 0 & 0 & \frac{(2N+1)\beta_1}{(2N-1)(2N+3)} & \frac{(2N^2-N-4)\alpha_1}{(2N-1)(N+1)} &  {v_m} & 0 \\
         \cdots & \cdots & 0 & 0 & 0 & 0 & 0 & \frac{(N-1)\beta_1}{2N-1} & 0 &  {v_m}
        \end{pmatrix}
    \end{aligned}   
\end{equation}
in the $y$ direction.

Since $P_{N+1}'(\xi)$ in Corollary \ref{cor:characteristic-polynomial-A-11} and $P_{N+2}'(\xi)$ both have real and distinct roots and there are no common roots in the case of $\alpha_1\ne0$, we have that the coefficient matrix \eqref{eq:coefficient-matrix-A-general-closure} in the $x$ direction has real and distinct roots and thus we have that the matrix is real diagonalizable. Moreover, the analysis of the model in the case of $\alpha_1=0$ is similar to the one in section \ref{subsubsec:alpha1=0}. Combining with the rotational invariance in Theorem \ref{thm:general-closure-relation-rotational-invariance}, we have the following result: 
\begin{thm}[hyperbolicity of an  {examplore} general moment closure]
    The model in 2D with the coefficient matrices given by \eqref{eq:coefficient-matrix-A-general-closure} in $x$ direction and \eqref{eq:coefficient-matrix-B-general-closure} in $y$ direction is hyperbolic if $\alpha_1\ne0$ or $\alpha_1=\beta_1=0$ and weakly hyperbolic if $\alpha_1=0$ and $\beta_1\ne0$.
\end{thm}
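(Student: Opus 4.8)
The plan is to follow the same two-step strategy used for the HSWME: reduce to a one-dimensional eigenproblem by rotational invariance, then analyze the spectrum of the $x$-direction coefficient matrix, treating $\alpha_1\ne0$ and $\alpha_1=0$ separately. Rotational invariance is already available: the $2\times2$ blocks that build $A$ in \eqref{eq:coefficient-matrix-A-general-closure} and $B$ in \eqref{eq:coefficient-matrix-B-general-closure} were constructed precisely to meet the constraint of Theorem~\ref{thm:rotational-invariance-block-matrix}, so Theorem~\ref{thm:general-closure-relation-rotational-invariance} gives $\cos\theta\,A(U)+\sin\theta\,B(U)=T^{-1}A(TU)T$ for all $\theta$ and all states $U$. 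Thus $\cos\theta\,A(U)+\sin\theta\,B(U)$ is similar to $A$ evaluated at some state, and hyperbolicity of the 2D model is equivalent to real diagonalizability of $A$ at every admissible state. Since the reordering \eqref{eq:different-order-variable} is just a permutation of coordinates, this is in turn equivalent to real diagonalizability of the block lower-triangular matrix $\tilde A$ in \eqref{eq:matrix-A-another-order-variable}, whose diagonal blocks are the unchanged HSWME block $\tilde A_{11}$ in \eqref{eq:coefficient-matrix-HSWME-A11} and the modified block $\tilde A_{22}$ in \eqref{eq:coefficient-matrix-HSWME-modify-A22}.

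For $\alpha_1\ne0$ I would exploit the block triangularity: $\det(xI-\tilde A)=\det(xI-\tilde A_{11})\,\det(xI-\tilde A_{22})$. By Corollary~\ref{cor:characteristic-polynomial-A-11}, $\det(xI-\tilde A_{11})$ equals, up to a nonzero constant, $P_{N+1}'\!\big(\frac{x-u_m}{\alpha_1}\big)\big((x-u_m)^2-gh-\alpha_1^2\big)$; its zeros are $u_m\pm\sqrt{gh+\alpha_1^2}$ together with $u_m+r_i\alpha_1$, where the $r_i\in(-1,1)$ are the $N$ real, simple roots of $P_{N+1}'$, and since $gh>0$ forces $\sqrt{gh+\alpha_1^2}>|\alpha_1|>|r_i\alpha_1|$, these $N+2$ eigenvalues are real and pairwise distinct, so $\tilde A_{11}$ is diagonalizable. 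The modified $\tilde A_{22}$ in \eqref{eq:coefficient-matrix-HSWME-modify-A22} is still an unreduced lower Hessenberg matrix when $\alpha_1\ne0$ (its superdiagonal entries are all nonzero), so by Theorem~\ref{thm:root-is-eigenvalue} together with the computation of $q_{N+1}$ preceding the theorem statement, $\det(xI-\tilde A_{22})\propto P_{N+2}'\!\big(\frac{x-u_m}{\alpha_1}\big)$, whose $N+1$ roots $u_m+s_i\alpha_1$ with $s_i\in(-1,1)$ are real and distinct, so $\tilde A_{22}$ is diagonalizable. Finally, the two sets of eigenvalues are disjoint: the pair $u_m\pm\sqrt{gh+\alpha_1^2}$ is separated from the eigenvalues of $\tilde A_{22}$ by the same estimate, while $P_{N+1}'$ and $P_{N+2}'$ share no zero because $\{P_n'\}_{n\ge1}$ is an orthogonal polynomial sequence on $[-1,1]$ with weight $1-x^2$ (integrate $\int_{-1}^1 P_m'P_n'(1-x^2)\,dx$ by parts and use $\big((1-x^2)P_n'\big)'=-n(n+1)P_n$), so two consecutive members have strictly interlacing, hence disjoint, zeros. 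Therefore $\tilde A$ has $2N+3$ distinct real eigenvalues and is real diagonalizable, so the 2D model is hyperbolic when $\alpha_1\ne0$.

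For $\alpha_1=0$ I would rerun the eigenvector analysis of Section~\ref{subsubsec:alpha1=0} with only cosmetic changes, as anticipated there. Setting $\alpha_1=0$, the block $\tilde A_{11}$ reduces to the HSWME block at $\alpha_1=0$, and $\tilde A_{22}$ in \eqref{eq:coefficient-matrix-HSWME-modify-A22} collapses to $u_mI_{N+1}$ since every off-diagonal and last-row entry carries a factor $\alpha_1$; hence the eigenvalues are $u_m\pm\sqrt{gh}$ (simple, since $gh>0$) and $u_m$ with algebraic multiplicity $2N+1$. For $(u_mI-\tilde A)q=0$ with $q=(q_1^T,q_2^T)^T$, the first block forces $q_{1,1}=q_{1,2}=0$ (using $gh>0$), and since $\tilde A_{22}=u_mI_{N+1}$ the second block reduces to $\tilde A_{21}q_1=0$ with $q_2$ unconstrained. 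As only the last row of $\tilde A_{22}$ was changed, and hence, through the rotational-invariance constraint, only the last row of $\tilde A_{21}$, rows $1$ through $N$ of $\tilde A_{21}$ agree with those of \eqref{eq:coefficient-matrix-HSWME-A21}, and exactly as in the reduced system \eqref{eq:linear-system-eigenvector-alpha1=0-2-reduced} they force $q_{1,3}=\cdots=q_{1,N+2}=0$ when $\beta_1\ne0$ and impose nothing when $\beta_1=0$. Hence the eigenspace of $u_m$ has dimension $2N+1$ if $\beta_1=0$, so $\tilde A$ is real diagonalizable and the model is hyperbolic; and dimension $N+1<2N+1$ if $\beta_1\ne0$, so $\tilde A$ is not real diagonalizable and the model is weakly hyperbolic. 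Combined with the generic case, this yields the three stated conclusions.

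The step I expect to require the most care is the degenerate case $\alpha_1=0$ with $\beta_1\ne0$: it cannot be settled by a distinct-eigenvalue count and instead needs the explicit eigenvector bookkeeping above, and one must verify that the modified last rows of $\tilde A_{22}$ and $\tilde A_{21}$ do not change the dimension count --- which is exactly why it matters that the constraints forcing $q_1=0$ already come from the \emph{unmodified} rows $1$ through $N$ of $\tilde A_{21}$ (in particular $q_{1,N+1}$ is pinned down by the $(N-1)$-th of these rows, not by the modified last one). The orthogonality of $\{P_n'\}$, used for disjointness of the two sets of eigenvalues in the generic case, is classical but worth spelling out.
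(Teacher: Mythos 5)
Your proof is correct and follows the same strategy as the paper: rotational invariance reduces 2D hyperbolicity to real diagonalizability of the $x$-direction matrix, block triangularity reduces that to comparing the spectra of $\tilde A_{11}$ and the modified $\tilde A_{22}$, and the $\alpha_1=0$ edge case is handled by the eigenvector bookkeeping of section \ref{subsubsec:alpha1=0}. You supply two details the paper leaves implicit -- the integration-by-parts argument showing $\{P_n'\}$ is orthogonal for weight $1-x^2$ (hence interlacing, hence no common zeros of $P_{N+1}'$ and $P_{N+2}'$), and the observation that only the last row of $\tilde A_{21}$ is changed by the closure so the constraints forcing $q_1=0$ when $\beta_1\ne0$ come from the unmodified rows -- both of which are worth spelling out.
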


 {
\section{Numerical simulations}\label{sec:numerical}

In this section, we present numerical simulations to compare the performance of different models. Here, we consider the following models:
\begin{enumerate}
    \item the classical shallow water equations (SWE);
    \item the hyperbolic shallow water moment equation (HSWME) in \eqref{eq:HSWME} with the coefficient matrices given in Lemma \ref{lem:coefficient-matrices-HSWME};
    \item the $\beta$-HSWME model with the coefficient matrices given in \eqref{eq:coefficient-matrix-beta-x} and \eqref{eq:coefficient-matrix-beta-y};
    \item our new example HSWME model with the coefficient matrices given by \eqref{eq:coefficient-matrix-A-general-closure} and \eqref{eq:coefficient-matrix-B-general-closure}
    \item our new globally HSWME model with the coefficient matrices given by \eqref{eq:coefficient-matrix-HSWME-globally-hyperbolic-x} and \eqref{eq:coefficient-matrix-HSWME-globally-hyperbolic-y}.
\end{enumerate}
Note that {the four types} of SWME models with the order of moment $N=1$ are the same.

\begin{exam}[radial dam break problem]
{We consider the radial dam break problem, see e.g. \cite{verbiest2022,verbiest2023hyperbolic}.} The computational domain is $\Omega = [0,1]^2$. We choose the initial condition for the water height:
\begin{equation}
    h(x,y,0) = \left\{
    \begin{aligned}
    1.5, & \quad \sqrt{(x-0.5)^2 + (y-0.5)^2} \le 0.2,\\
    1, & \quad \textrm{otherwise}.\\
    \end{aligned}
    \right.
\end{equation}
The initial velocity profiles are linear functions in vertical direction:
\begin{equation}
    u(x,y,\zeta,0) = v(x,y,\zeta,0) = \frac{1}{4}\zeta.
\end{equation}
The friction coefficient $\lambda=0.1$ and the slip length $\nu=0.1$. The gravitational constant is set to $g=1$. 
\end{exam}

We use the first-order Roe method with the local Lax-Friedrichs approximation to the absolute value of the intermediate matrix (see the details in section 3.4 in \cite{castro2017well}). The Euler forward method is applied for the time discretization. The spatial resolutions are $N_x = N_y = 1024$. The CFL number is taken to be 1. The simulation is run up to $t=1$.

In Figure \ref{fig:dam-break-t0d1}, we show the profiles of the water height $h$, the average velocity in $x$-direction $u_m$, and the moments $\alpha_1$ and $\alpha_2$ at $t=0.1$ along $y=0.5$. Here, we only show the profiles for $0\le x\le 0.5$ due to symmetry. From Figure \ref{fig:dam-break-t0d1}(a), we can see all the models give similar results for the water height $h$. From Figure \ref{fig:dam-break-t0d1}(b), {the four types} of SWME models give similar results in the average velocity $u_m$ while the SWE model gives a lower velocity. For the higher order moment, the HSMWE model with $N=1$ gives different prediction on $\alpha_1$ near the extreme points comparing to other types of SWME with $N=2$ in Figure \ref{fig:dam-break-t0d1}(c). The $\beta$-HSMWE model with $N=2$ gives different profiles on $\alpha_2$ near the extreme points from the other models as shown in  Figure \ref{fig:dam-break-t0d1}(d).

{To show the rotational invariance of the model, we plot the solution of the water height $h$ in 1D cut along different angles ($y=0.5$, $x=0.5$, and $y=x$), and the surface plot in Figure \ref{fig:dam-break-t0d1-rotation}. This demonstrates Theorem \ref{thm:rotation-invariance-HSWME} that the model is rotational invariant.
}

\begin{exam}[smooth wave problem]
{We consider the smooth wave problem, see e.g. \cite{kowalski2017moment}. The computational domain is $\Omega = [0,1]^2$. We choose the initial condition for the water height:
\begin{equation}
    h(x,y,0) = 1 + \exp(3\cos(2\pi(x+y+0.5))-4).
\end{equation}
The initial velocity profiles are linear functions in the vertical direction:
\begin{equation}
    u(x,y,\zeta,0) = v(x,y,\zeta,0) = \frac{1}{4}\zeta.
\end{equation}
The friction coefficient $\lambda=0.1$ and the slip length $\nu=0.1$. The gravitational constant is set to $g=1$. The periodic boundary conditions are applied in both the $x$ and $y$ directions.}
\end{exam}

{In Figure \ref{fig:smooth-wave-t1}, we compare the profiles of the water height is $h$, the average velocity in the $x$-direction is $u_m$, and the moments are $\alpha_1$ and $\alpha_2$ at $t=1$ along $y=0.5$. From Figure \ref{fig:smooth-wave-t1}(a), we see that the SWE, the HSWME with $N=1$, and the other four SWME with $N=2$ give different profiles of the  water height. Notably, all SWME models show the same wave propagation speeds in the simulation results, while the SWE model gives the speeds lacking behind, which is consistent with the eigenvalue analysis. From Figure \ref{fig:smooth-wave-t1}(b), the SWME models with $N=2$ all give a similar average velocity profiles while the SWME model with $N=1$ and the SWE model gives a smaller velocity. For the higher order moments, we observe that the SWME models with $N=2$ predict different profiles of $\alpha_1$ from the SWME model with $N=1$, while the profiles of the higher order moments $\alpha_2$ are almost the same for different SWME models.}

In summary, {the four types} of SWME models give very close predictions for the water height $h$ and the average velocity $u_m$ and slightly different predictions for the higher order moments. The similar numerical results are also observed in the 1D models in \cite{koellermeier2020analysis} for the HSWME model and the $\beta$-HSWME model.

Moreover, the weakly hyperbolicity for partial state does not cause any trouble in the simulation. The numerical results are stable and the solution profiles are smooth. It remains to be the future work to investigate the performance of the HSWME model in more complicated problems and compare with the solution of the full model of the incompressible NS equations.
\begin{figure}
    \centering
    \subfigure[the water height $h$]
    {
    \begin{minipage}[b]{0.465\textwidth}
    \includegraphics[width=1\textwidth]{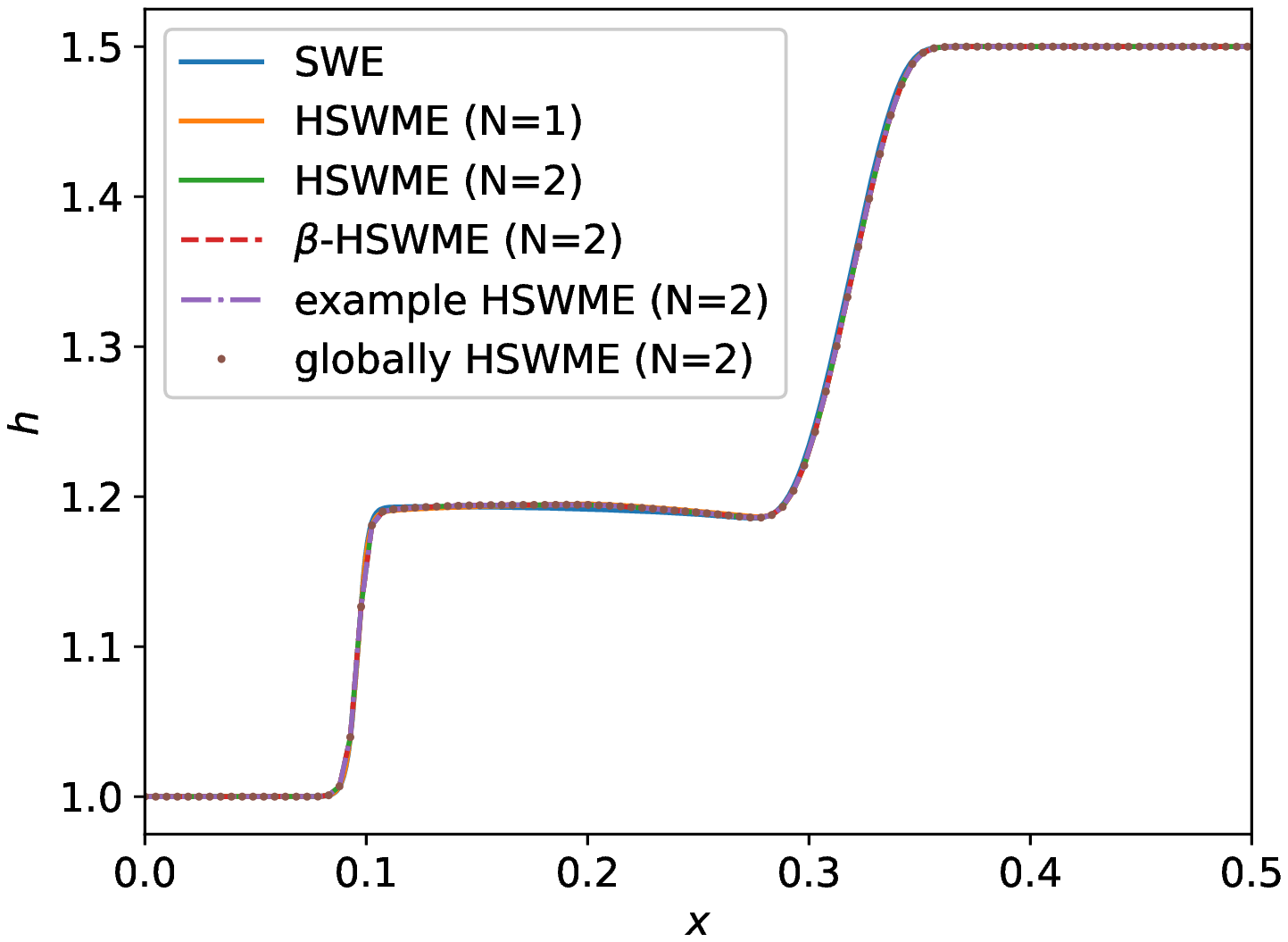}
    \end{minipage}
    }
    \subfigure[the average velocity in $x$-direction $u_m$]
    {
    \begin{minipage}[b]{0.46\textwidth}    
    \includegraphics[width=1\textwidth]{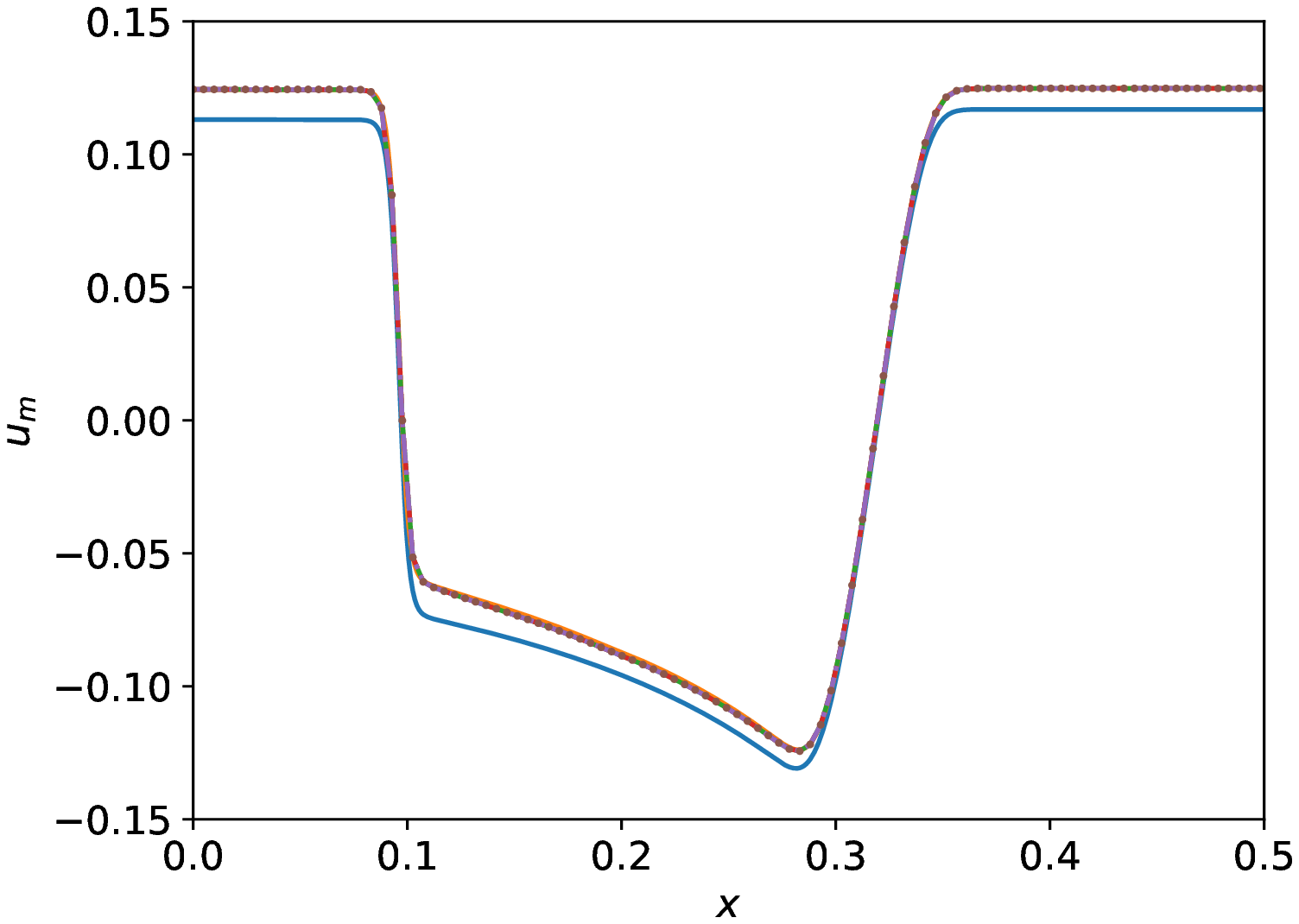}
    \end{minipage}
    }
    \bigskip
    \subfigure[the moment $\alpha_1$ in $x$-direction]{
    \begin{minipage}[b]{0.46\textwidth}
    \includegraphics[width=1\textwidth]{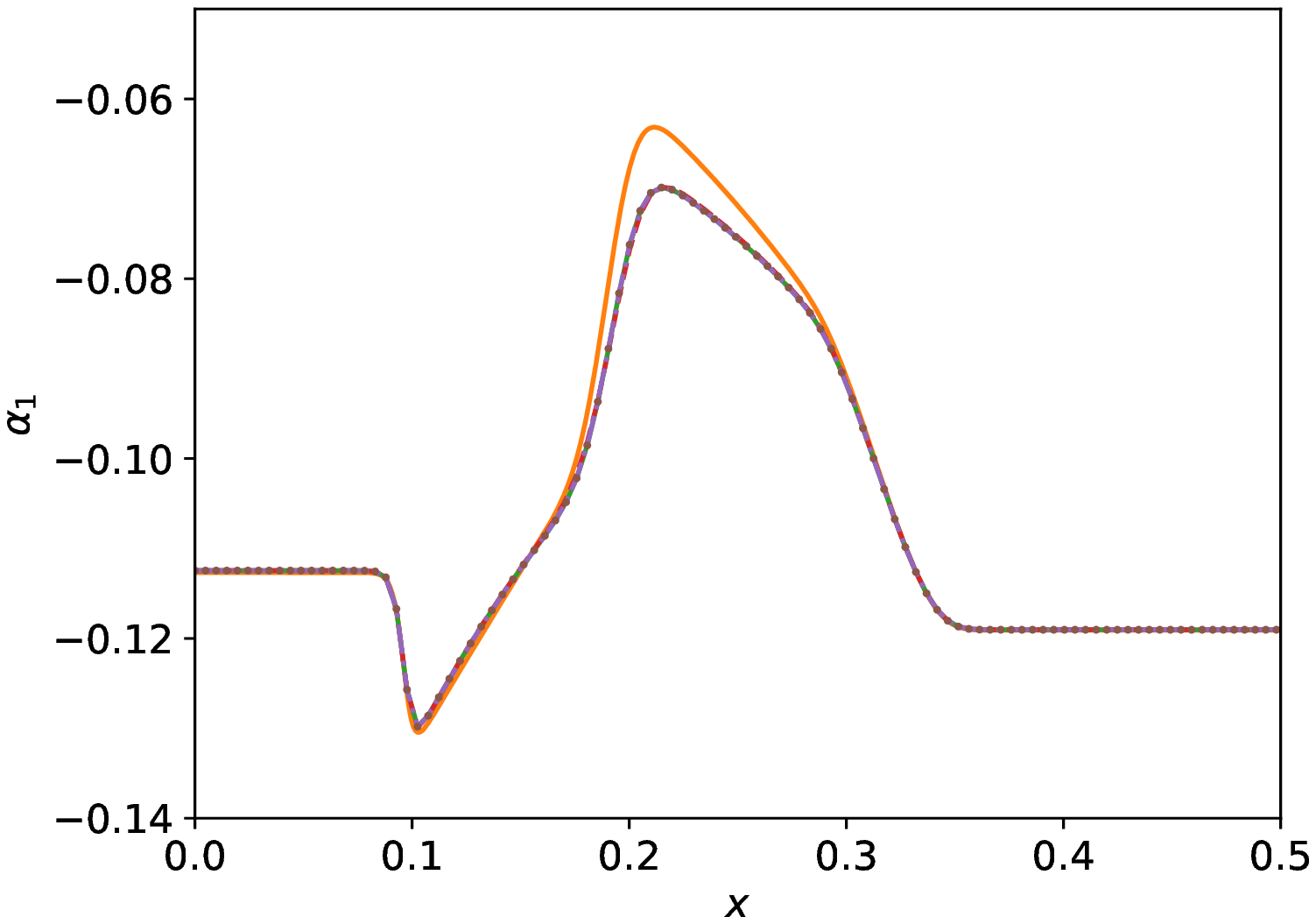}
    \end{minipage}
    }
    \subfigure[the moment $\alpha_2$ in $x$-direction]{
    \begin{minipage}[b]{0.46\textwidth}    
    \includegraphics[width=1\textwidth]{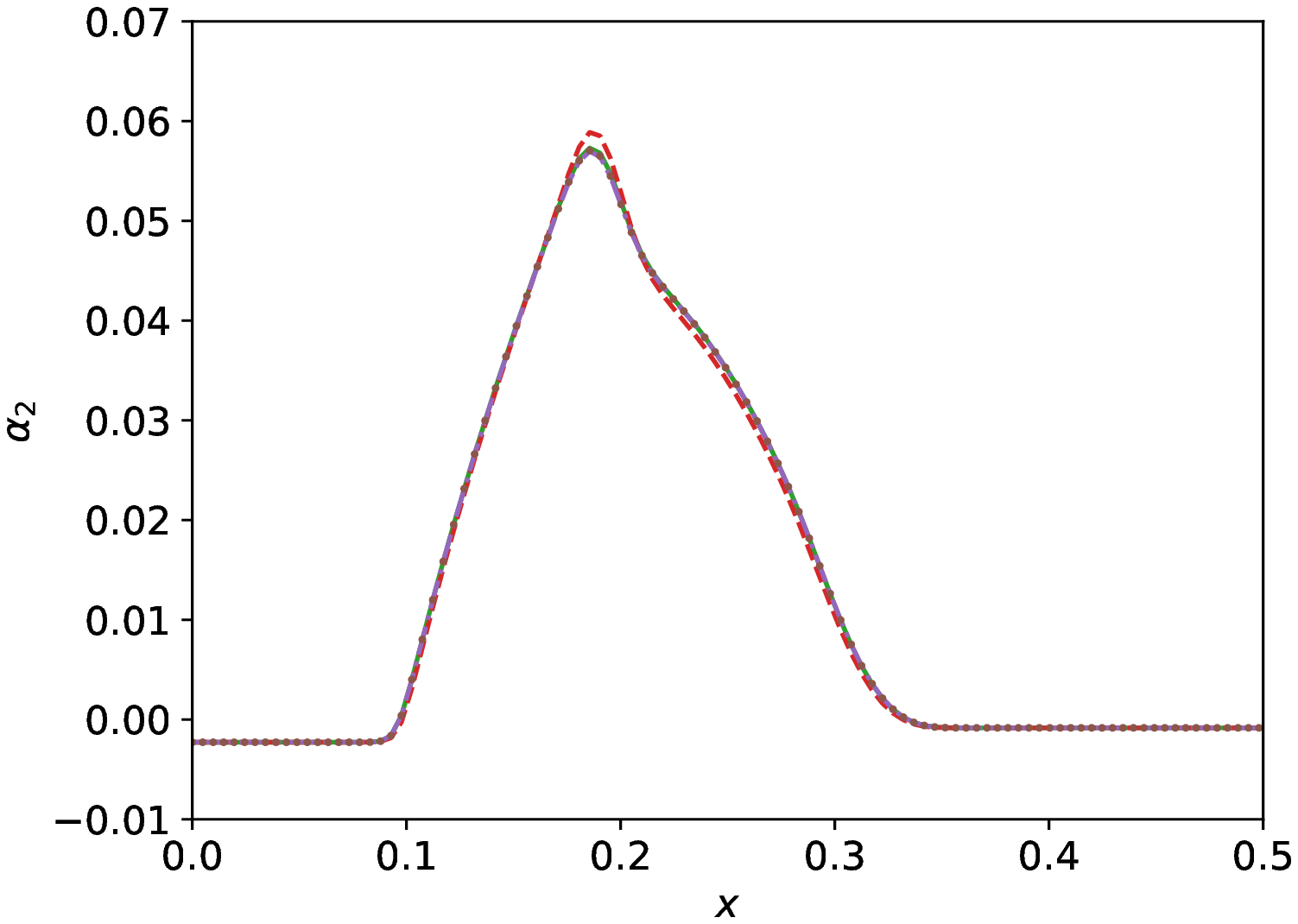}
    \end{minipage}
    }        
    \caption{Simulation of a radial dam break at $t=0.1$. (1) Blue solid line denotes the shallow water equation (SWE); (2) orange solid line denotes the hyperbolic shallow water moment equation (HSWME) with $N=1$; (3) green solid line denotes the HSWME with $N=2$; (4) red dashed line denotes the $\beta$-HSWME with $N=2$; (5) purple dashed dot line denotes the example HSWME with $N=2$; (6)  brown dot line denotes the globally HSWME with $N=2$.}
    \label{fig:dam-break-t0d1}
\end{figure}

\begin{figure}
    \centering
    \subfigure[the water height in 1D cut along different angles]    
    {
    \begin{minipage}[b]{0.45\textwidth}
    \includegraphics[width=1\textwidth]{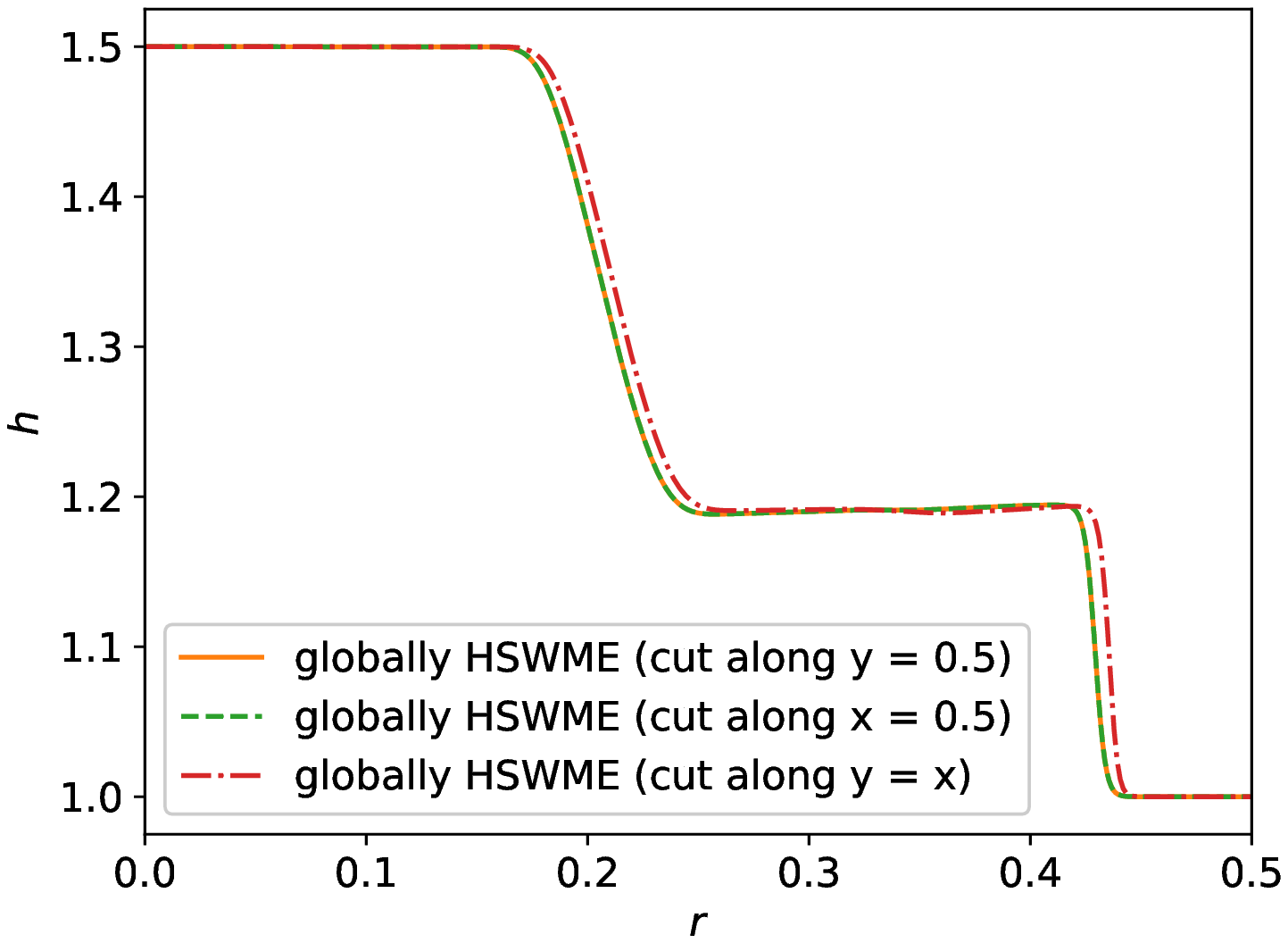}
    \end{minipage}
    }
    \subfigure[the water height in 3D plot]
    {
    \begin{minipage}[b]{0.45\textwidth}
    \includegraphics[width=1\textwidth]{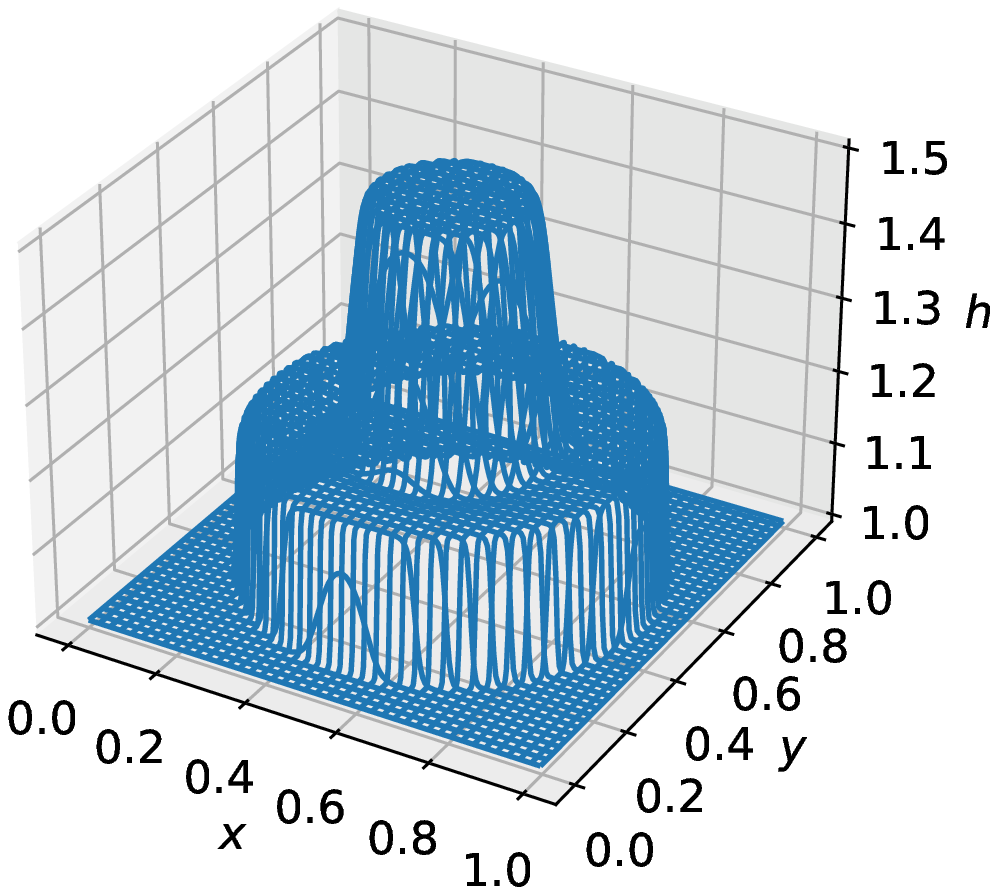}
    \end{minipage}
    }
    \caption{{Simulation of a radial dam break at $t=0.1$ using globally HSWME. Left: the water height in 1D cut along $y=0.5$, $x=0.5$, and $y=x$. Here $r=\sqrt{(x-0.5)^2 + (y-0.5)^2}$. Right: the water height in 3D plot.}}
    \label{fig:dam-break-t0d1-rotation}
\end{figure}

\begin{figure}
    \centering
    \subfigure[the water height $h$]
    {
    \begin{minipage}[b]{0.465\textwidth}
    \includegraphics[width=1\textwidth]{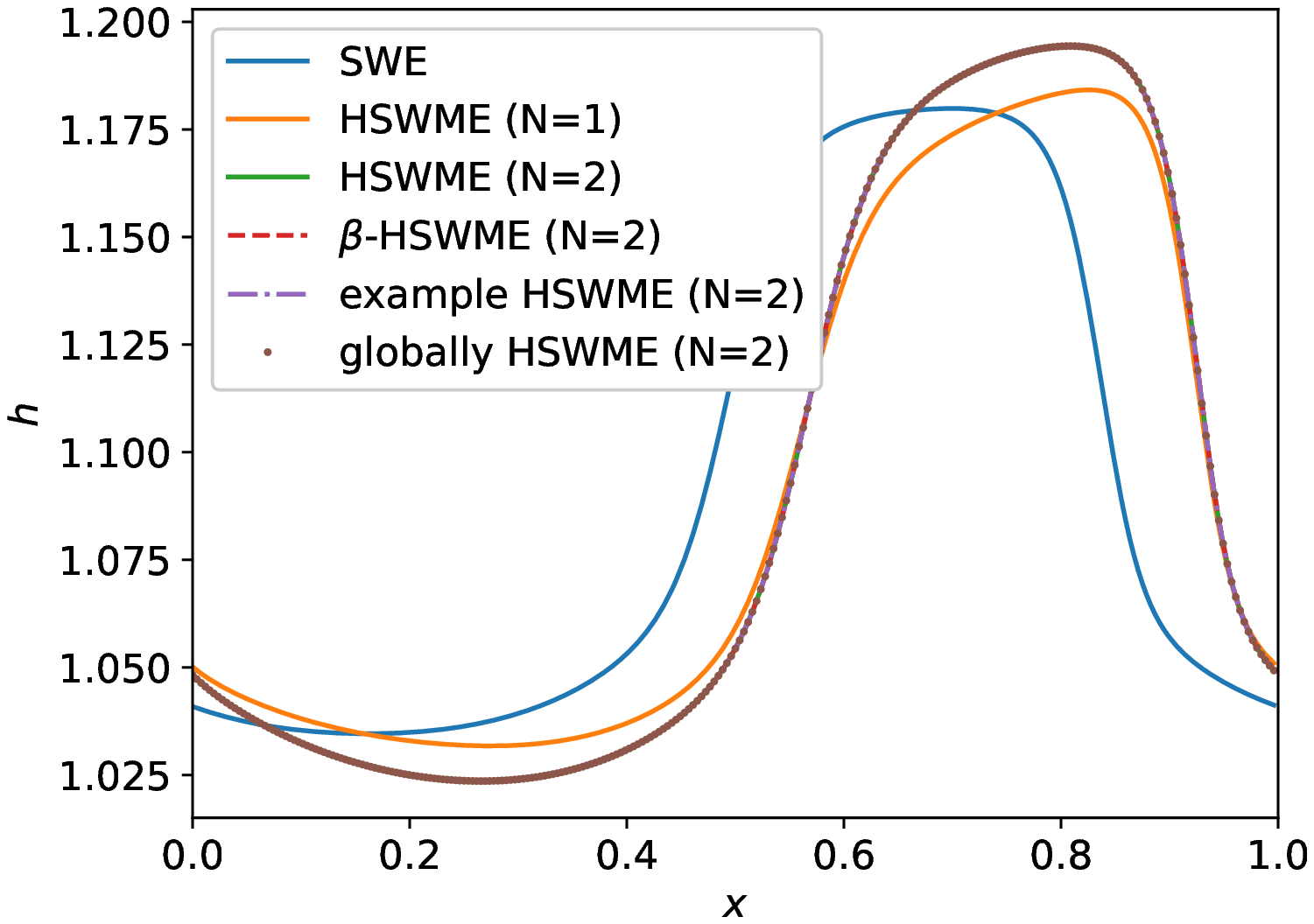}
    \end{minipage}
    }
    \subfigure[the average velocity in $x$-direction $u_m$]
    {
    \begin{minipage}[b]{0.46\textwidth}    
    \includegraphics[width=1\textwidth]{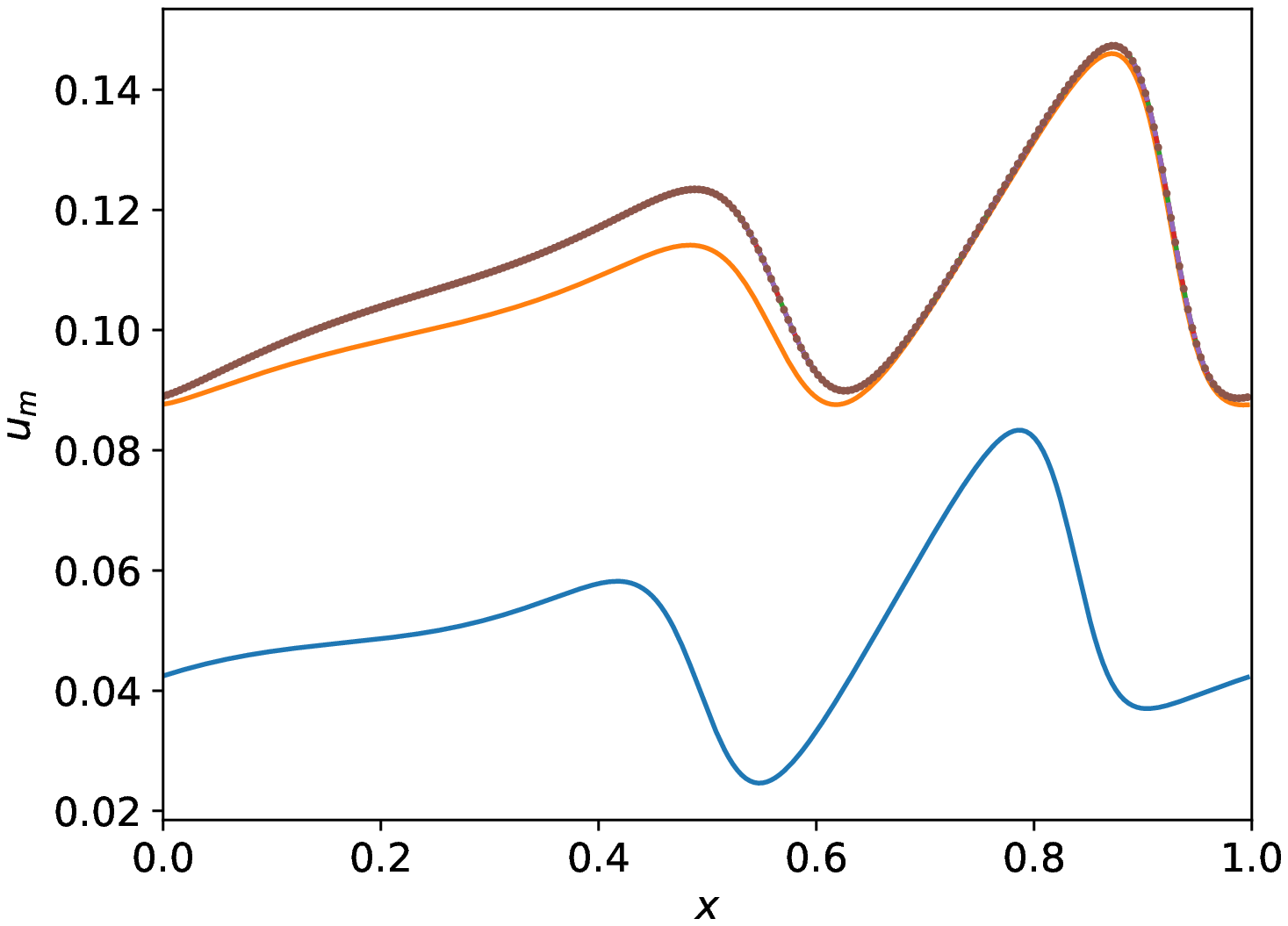}
    \end{minipage}
    }
    \bigskip
    \subfigure[the moment $\alpha_1$ in $x$-direction]{
    \begin{minipage}[b]{0.46\textwidth}
    \includegraphics[width=1\textwidth]{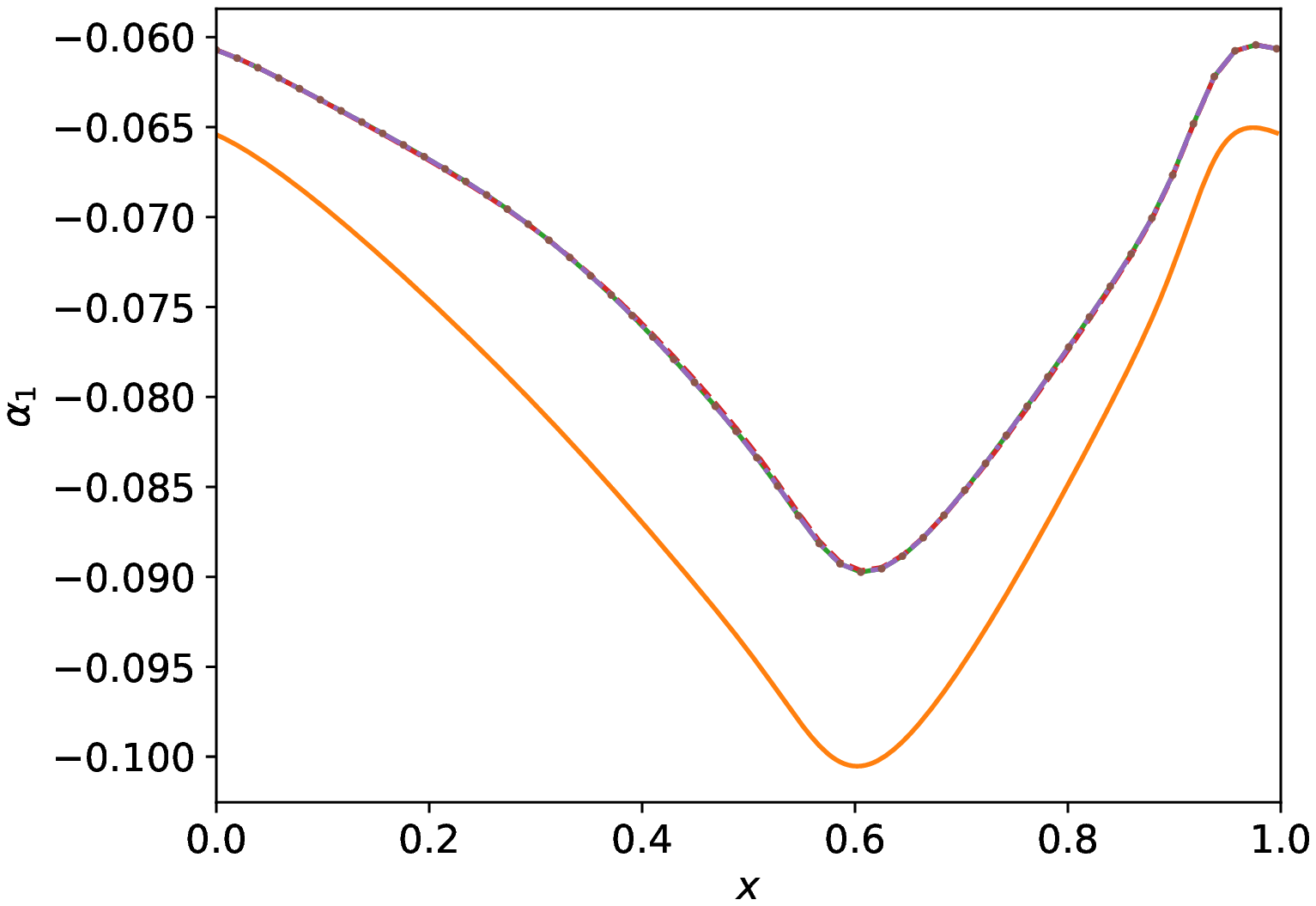}
    \end{minipage}
    }
    \subfigure[the moment $\alpha_2$ in $x$-direction]{
    \begin{minipage}[b]{0.46\textwidth}    
    \includegraphics[width=1\textwidth]{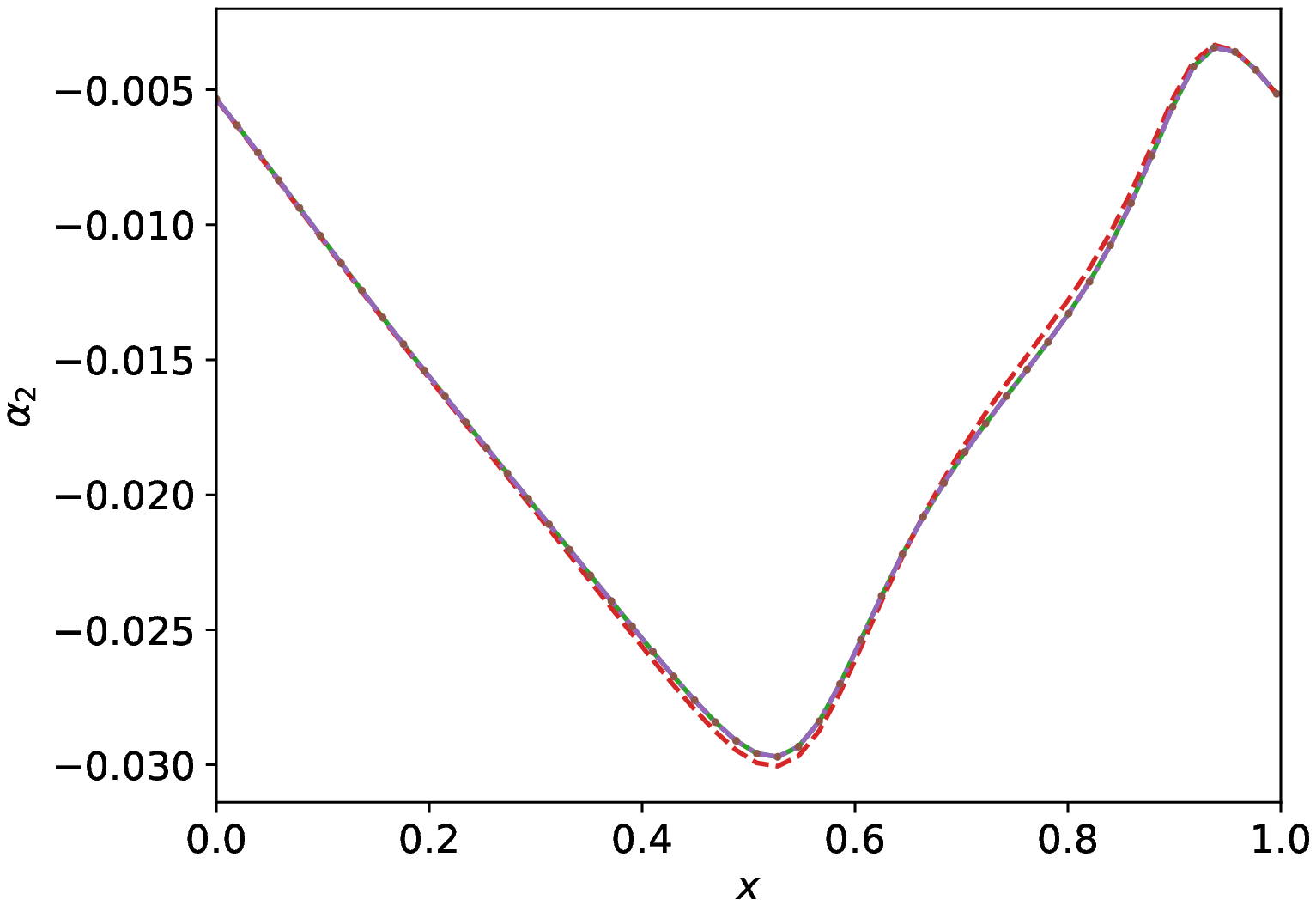}
    \end{minipage}
    }        
    \caption{{Simulation of the smooth wave at $t=1$. (1) Blue solid line denotes the shallow water equation (SWE); (2) orange solid line denotes the hyperbolic shallow water moment equation (HSWME) with $N=1$; (3) green solid line denotes the HSWME with $N=2$; (4) red dashed line denotes the $\beta$-HSWME with $N=2$; (5) purple dashed dot line denotes the example HSWME with $N=2$; (6)  brown dot line denotes the globally HSWME with $N=2$.}}
    \label{fig:smooth-wave-t1}
\end{figure}

}

\section{Concluding remarks}\label{sec:conclusion}

In this work, we investigate the rotational invariance and hyperbolicity of the shallow water moment equations in 2D.  {We find that the moment model in 2D is hyperbolic in most cases and weakly hyperbolic in a degenerate edge case. With a simple modification to the coefficient matrices, we fix this weakly hyperbolicity and propose a new global hyperbolic model.} We establish a general closure relation such that the resulting shallow water moment equations are hyperbolic and rotationally invariant. Moreover, we propose a new class of shallow water moment equations, which is a generalization of the HSWME model. {We conduct numerical simulations for these SWME models and compare with the SWE model. It shows that the model generates different solution profiles with different numbers of moments.} {We also mention that the new class of shallow water moment equations is only proven to be hyperbolic up to $N = 100$. Proving that the model is hyperbolic for an arbitrary number of moments remains an open question, i.e., $N>100$.}

In future work, it remains to be seen whether the derived shallow water moment models are good approximations of the original incompressible NS equations both numerically and theoretically. 
Another interesting direction is to apply data-driven methods to learn the closure relations while preserving the rotational invariance and hyperbolicity, as we have done in \cite{huang2020learning,huang2021gradient,huang2023machine2,huang2023machine3}.
{In addition, the analysis in the current paper could not be easily generalized to the Shallow Water Linearized Moment Equations in \cite{koellermeier2022steady} due to the different structure of the Jacobian matrix of the model.}
These topics are the subject of our ongoing work.

\section*{Acknowledgments}

The team would like to thank the anonymous reviewers for their invaluable comments and questions which improved the quality of this paper.
The team would like to thank Michigan State University and Texas Tech University for the support of this work, through the ongoing support of the PIs and students.  We would further like to thank Keith Promislow for his helpful discussions and insight during the development of this effort.  Finally, PI Christlieb would like to thank the Office of Naval Research for the support of this work under grant number N00014-18-1-2552.

\appendix
\section{Proof of Proposition \ref{prop:rotational-invariance-system-unchanged}}\label{app:proof-rotation-invariance-system-unchanged} 

We first introduce the following lemma, which will be used in the proof of Proposition \ref{prop:rotational-invariance-system-unchanged}.
\begin{lem}\label{lem:rotation-invariance-property-2}
    If the matrices $A(U)$ and $B(U)$ satisfy the rotational invariance property
    \begin{equation}\label{eq:rotation-invariance-property-1}
        \cos\theta \, A(U) + \sin\theta \, B(U) = T^{-1} A(T U) T, \quad \forall \,\, 0\le \theta < 2\pi,
    \end{equation}    
    then they also satisfy
    \begin{equation}\label{eq:rotation-invariance-property-2}
        -\sin\theta \, A(U) + \cos\theta \, B(U) = T^{-1} B(T U) T, \quad \forall \,\, 0\le \theta < 2\pi.
    \end{equation}    
    Here $T = T(\theta)$ is the rotation matrix defined in \eqref{eq:rotation-matrix-T}.
\end{lem}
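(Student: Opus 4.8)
The plan is to derive \eqref{eq:rotation-invariance-property-2} from \eqref{eq:rotation-invariance-property-1} by invoking the hypothesis at two special angles and using the group structure of the rotation matrices. First I would record the elementary composition law $T(\theta_1)T(\theta_2) = T(\theta_1+\theta_2)$, which follows at once from the block-diagonal form \eqref{eq:rotation-matrix-T} of $T$ together with the identity $T_2(\theta_1)T_2(\theta_2) = T_2(\theta_1+\theta_2)$ for the $2\times2$ rotation blocks. In particular $T(\pi/2)$ is invertible and $T(\theta+\pi/2) = T(\pi/2)\,T(\theta)$.

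Next I would evaluate the hypothesis \eqref{eq:rotation-invariance-property-1} at the angle $\theta+\pi/2$. Since $\cos(\theta+\pi/2) = -\sin\theta$ and $\sin(\theta+\pi/2) = \cos\theta$, its left-hand side is exactly the left-hand side of \eqref{eq:rotation-invariance-property-2}. For the right-hand side I substitute $T(\theta+\pi/2) = T(\pi/2)T(\theta)$ and set $W := T(\theta)U$, obtaining
\begin{equation*}
-\sin\theta\,A(U) + \cos\theta\,B(U) = T(\theta)^{-1}\Bigl[\,T(\pi/2)^{-1}A\bigl(T(\pi/2)W\bigr)T(\pi/2)\,\Bigr]T(\theta).
\end{equation*}
The bracketed matrix is precisely the right-hand side of \eqref{eq:rotation-invariance-property-1} with $U$ replaced by $W$ and $\theta$ replaced by $\pi/2$; since $\cos(\pi/2)=0$ and $\sin(\pi/2)=1$, that instance of the hypothesis states $T(\pi/2)^{-1}A(T(\pi/2)W)T(\pi/2) = B(W)$. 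Hence the right-hand side collapses to $T(\theta)^{-1}B(W)T(\theta) = T(\theta)^{-1}B(T(\theta)U)T(\theta)$, which establishes \eqref{eq:rotation-invariance-property-2} for every $\theta$.

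This argument is essentially routine, and I expect no serious obstacle; the one point deserving care is the choice of factorization. One must write $T(\theta+\pi/2) = T(\pi/2)T(\theta)$, not $T(\theta)T(\pi/2)$, so that the hypothesis at angle $\pi/2$ is applied to the \emph{inner} conjugated matrix --- where $A$, not $B$, appears --- thereby avoiding a circular appeal to the very identity \eqref{eq:rotation-invariance-property-2} one is trying to prove. Both factorizations are valid since $T(\pi/2)$ and $T(\theta)$ commute, but only the former yields the conclusion directly.
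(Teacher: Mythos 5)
Your proof is correct, and it takes a genuinely different and cleaner route than the paper's. The paper's argument first replaces $\theta$ by $-\theta$ in the hypothesis (using $T^{-1}(\theta)=T(-\theta)$), then substitutes $U\mapsto T(\theta)U$, conjugates, back-substitutes the hypothesis, and is left to divide by $\sin\theta$ — which forces a separate treatment of the degenerate angles $\theta=0$ and $\theta=\pi$ (the latter handled by a further composition trick with $T(\pi/2)$). Your approach instead evaluates the hypothesis once at $\theta+\pi/2$ (whose left-hand side is already the target left-hand side), factors $T(\theta+\pi/2)=T(\pi/2)T(\theta)$, and applies the hypothesis at $\pi/2$ to the inner conjugate to produce $B$. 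This avoids any division and hence any case split, and your remark on why the inner factor must be $T(\pi/2)$ (so the hypothesis is used on the $A$-conjugate rather than circularly on a $B$-conjugate) is exactly the right point of care. The only small thing worth making explicit is that the hypothesis is stated for $\theta\in[0,2\pi)$, so invoking it at $\theta+\pi/2$ (or, in the paper's proof, at $-\theta$) tacitly uses the $2\pi$-periodicity of $T$, $\cos$, and $\sin$; both proofs rely on this, and it is harmless, but a one-line remark would tighten the argument.
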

\begin{proof}
    Taking $\theta$ to be $-\theta$ in \eqref{eq:rotation-invariance-property-1}, we have
    \begin{equation}\label{eq:rotation-invariance-property-1-neg}
        \cos\theta \, A(U) - \sin\theta \, B(U) = T(\theta) A(T^{-1}(\theta) U) T^{-1}(\theta),
    \end{equation}
    where we use the fact that $T^{-1}(\theta) = T(-\theta)$. 
    Taking $U$ to be $T(\theta)U$ in \eqref{eq:rotation-invariance-property-1-neg}, we have
    \begin{equation}\label{eq:rotation-invariance-property-1-neg-2}
        \cos\theta \, A(T(\theta)U) - \sin\theta \, B(T(\theta)U) = T(\theta) A(U) T^{-1}(\theta).
    \end{equation}
    Multiplying both sides of \eqref{eq:rotation-invariance-property-1-neg-2} by $T^{-1}(\theta)$ from the left and by $T(\theta)$ from the right, we obtain
    \begin{equation}\label{eq:rotation-invariance-property-1-neg-3}
        \cos\theta \, T^{-1}(\theta) \, A(T(\theta)U)  \, T(\theta) - \sin\theta \, T^{-1}(\theta) \, B(T(\theta)U) \, T(\theta) = A(U).
    \end{equation}
    Substituing \eqref{eq:rotation-invariance-property-1} to the first term on the left-hand side of \eqref{eq:rotation-invariance-property-1-neg-3}, we obtain
    \begin{equation*}
        \cos\theta \, (\cos\theta \, A(U) + \sin\theta \, B(U)) - \sin\theta \, T^{-1}(\theta) \, B(T(\theta)U) \, T(\theta) = A(U),
    \end{equation*}
    which is equivalent to
    \begin{equation*}
        -\sin^2\theta \, A(U) + \cos\theta \sin\theta \, B(U) = \sin\theta \, T^{-1}(\theta) \, B(T(\theta)U) \, T(\theta).
    \end{equation*}
    Under the assumption that $\sin\theta \ne 0$, we can divide both sides of the above equation by $\sin\theta$ and obtain \eqref{eq:rotation-invariance-property-2}.

    Therefore, it suffices to prove the case of $\sin\theta = 0$, i.e., $\theta = 0$ or $\theta = \pi$. It is obvious that \eqref{eq:rotation-invariance-property-2} holds for $\theta = 0$. Next, we only need to prove the case of $\theta = \pi$, in which \eqref{eq:rotation-invariance-property-2} is reduced to
    \begin{equation*}
        -B(U) = T^{-1}(\pi) \, B(T(\pi)U) \, T(\pi).
    \end{equation*}

    Taking $\theta$ to be $\pi$ in \eqref{eq:rotation-invariance-property-1}, we have
    \begin{equation}\label{eq:rotation-invariance-property-1-pi}
        -A(U) = T^{-1}(\pi)  \, A(T(\pi)U)  \, T(\pi).
    \end{equation}
    Taking $\theta$ to be $\frac{\pi}{2}$ in \eqref{eq:rotation-invariance-property-1}, we have
    \begin{equation}\label{eq:rotation-invariance-property-1-pi-2}
        B(U) = T^{-1}\brac{\frac{\pi}{2}} \, A\brac{T\brac{\frac{\pi}{2}}U} \, T\brac{\frac{\pi}{2}}.
    \end{equation}
    Therefore, we compute
    \begin{equation*}
    \begin{aligned}
        T^{-1}(\pi) \, B(T(\pi)U) \, T(\pi) \stackrel{\text{\eqref{eq:rotation-invariance-property-1-pi-2}}}{=}{}& T^{-1}(\pi) \, T^{-1}\brac{\frac{\pi}{2}} \, A\brac{T\brac{\frac{\pi}{2}}T(\pi)U} \, T\brac{\frac{\pi}{2}} \, T(\pi) \\
        ={}& T^{-1}\brac{\frac{\pi}{2}} \, \brac{T^{-1}(\pi) \, A\brac{T(\pi)T\brac{\frac{\pi}{2}}U} \, T(\pi)} \, T(\frac{\pi}{2}) \\
        \stackrel{\text{\eqref{eq:rotation-invariance-property-1-pi}}}{=}{}& T^{-1}\brac{\frac{\pi}{2}} \brac{- A\brac{T\brac{\frac{\pi}{2}}U}} T\brac{\frac{\pi}{2}} \\
        \stackrel{\text{\eqref{eq:rotation-invariance-property-1-pi-2}}}{=}{}& -B(U).
    \end{aligned}
    \end{equation*}
    Here in the second equality, we have used the fact that $T(\alpha)T(\beta)=T(\beta)T(\alpha)$ for any $\alpha,\beta\in\mathbb{R}$. This completes the proof.
\end{proof}

Next, we show the proof of Proposition \ref{prop:rotational-invariance-system-unchanged}:
\begin{proof}[Proof of Proposition \ref{prop:rotational-invariance-system-unchanged}]
    Suppose that we have a Cartesian coordinate system $(x,y)$ and another Cartesian coordinate system $(x',y')$ in which the origin is fixed and the axes are rotated by an angle $\theta$ counterclockwise with respect to the original coordinate system. Suppose that an unknown function in the original system satisfying the governing equation \eqref{eq:general-first-order-system} is denoted by $U = U(t,X)$ with $X=(x,y)^T$. Then this function in the new coordinate system is represented by
    \begin{equation}\label{eq:rotated-unknown-function}
        U'(t,X') = T U(t, \, T_2^{-1}X'),
    \end{equation}
    where $T=T(\theta)$ and $T_2=T_2(\theta)$ are defined in \eqref{eq:rotation-matrix-T} and \eqref{eq:rotation-matrix-size-2-2} and $X'=(x',y')^T$. We can also write \eqref{eq:rotated-unknown-function} in an explicit form:
    \begin{equation*}
        U'(t,x',y') = T U(t,  \, \cos\theta \, x' - \sin\theta \, y',  \,  \sin\theta \, x' + \cos\theta \, y').
    \end{equation*}

    Next, we will prove that $U'=U'(t,X')$ also satisfies \eqref{eq:general-first-order-system}.
    By chain rule, we have
    \begin{equation*}
    \begin{aligned}
        \partial_t U'(t,X') &= T \partial_t U(t,T_2^{-1}X'), \\
        \partial_{x'} U'(t,X') &= T \brac{\cos\theta \, \partial_x U(t,T_2^{-1}X') + \sin\theta \, \partial_y U(t,T_2^{-1}X')}, \\
        \partial_{y'} U'(t,X') &= T \brac{-\sin\theta \, \partial_x U(t,T_2^{-1}X') + \cos\theta \, \partial_y U(t,T_2^{-1}X')}.
    \end{aligned}
    \end{equation*}
    Now we compute the governing equation in the new coordinate system:
    \begin{equation*}
    \begin{aligned}
        & \partial_t U' + A(U')\partial_{x'} U' + B(U')\partial_{y'} U' \\
        ={}& T \partial_t U(t,T_2^{-1}X') + A(U') T \brac{\cos\theta \, \partial_x U(t,T_2^{-1}X') + \sin\theta \, \partial_y U(t,T_2^{-1}X')} \\
        & + B(U') T \brac{-\sin\theta \, \partial_x U(t,T_2^{-1}X') + \cos\theta \, \partial_y U(t,T_2^{-1}X')} \\
        ={}& T \partial_t U + T \brac{T^{-1} A(U') T} \brac{\cos\theta \, \partial_x U + \sin\theta \, \partial_y U} \\
        & + T \brac{T^{-1} B(U') T} \brac{-\sin\theta \, \partial_x U + \cos\theta \, \partial_y U} \\        
        ={}& T \partial_t U + T \brac{\cos\theta A(U) + \sin\theta B(U)} \brac{\cos\theta \partial_x U + \sin\theta \partial_y U} \\
        & + T \brac{-\sin\theta A(U) + \cos\theta B(U)} \brac{-\sin\theta \partial_x U + \cos\theta \partial_y U} \\
        ={}& T \partial_t U + T A(U) \partial_x U + T B(U) \partial_y U \\
        ={}& T \brac{\partial_t U(t,T_2^{-1}X') + A(U) \partial_x U(t,T_2^{-1}X') + B(U) \partial_y U(t,T_2^{-1}X')} \\
        ={}& T S(U,T_2^{-1}X') \\
        ={}& S(U',X').
    \end{aligned}
    \end{equation*}
    Here we have used the rotation invariance property in Definition \ref{def:rotation-invariance} and Lemma \ref{lem:rotation-invariance-property-2}. This completes the proof.
\end{proof}

\section{Proof of Proposition \ref{prop:identity}}\label{appendix:proof-equality}

\begin{proof}
    We prove the equalities in Proposition \ref{prop:identity} by direct calculations:
\begin{enumerate}
\item
\begin{equation*}
\textrm{LHS} = \ct \, (\cos\theta  \, u + \sin\theta  \, v) - \st \, (-\sin\theta  \, u + \cos\theta  \, v) = u = \textrm{RHS}.
\end{equation*}

\item
\begin{equation*}
\textrm{LHS} = \st \, (\cos\theta  \, u + \sin\theta  \, v) + \ct \, (-\sin\theta  \, u + \cos\theta  \, v) = v = \textrm{RHS}.
\end{equation*}

\item 
\begin{equation*}
\begin{aligned} 
\textrm{LHS} ={}& 
\cos\theta \, (\cos\theta  \, u + \sin\theta  \, v)^2 - \sin\theta \, (\cos\theta  \, u + \sin\theta  \, v)(-\sin\theta  \, u + \cos\theta  \, v) \\
={}& (\cos\theta  \, u + \sin\theta  \, v)\brac{\cos\theta \, (\cos\theta  \, u + \sin\theta  \, v) - \sin\theta \, (-\sin\theta  \, u + \cos\theta  \, v)} \\
={}& u(\cos\theta  \, u + \sin\theta  \, v) = \textrm{RHS}.
\end{aligned}
\end{equation*}

\item
\begin{equation*}
\begin{aligned} 
\textrm{LHS} ={}& 
\cos\theta(\cos\theta  \, u + \sin\theta  \, v)^2 - \sin\theta(\cos\theta  \, u + \sin\theta  \, v)(-\sin\theta  \, u + \cos\theta \,  v) \\
={}& (\cos\theta  \, u + \sin\theta  \, v)\brac{\cos\theta(\cos\theta \,  u + \sin\theta  \, v) - \sin\theta(-\sin\theta  \, u + \cos\theta  \, v)} \\
={}& u(\cos\theta  \, u + \sin\theta \,  v) = \textrm{RHS}.
\end{aligned}
\end{equation*}

\item
\begin{equation*}
\begin{aligned}
\textrm{LHS}
={}& 2 \cos\theta  \, (\cos^2\theta  \, u\alpha + \cos\theta \, \sin\theta \, (u\beta+v\alpha) + \sin^2\theta  \,  v\beta) \\
& - \sin\theta \, \brac{\cos^2\theta \,  u\beta + \cos\theta \, \sin\theta \, (v\beta-u\alpha) - \sin^2\theta  \, v\alpha} \\
& - \sin\theta \, \brac{\cos^2\theta v\alpha + \cos\theta \, \sin\theta \, (-u\alpha+v\beta) - \sin^2\theta u\beta} \\
={}& u\alpha\cos\theta \, (2\cos^2\theta+\sin^2\theta+\sin^2\theta) + v\alpha\sin\theta \, (2\cos^2\theta+\sin^2\theta-\cos^2\theta) \\
& + u\beta\sin\theta \, (2\cos^2\theta-\cos^2\theta+\sin^2\theta) + v\beta\cos\theta(2\sin^2\theta-\sin^2\theta-\sin^2\theta) \\
={}& 2u\alpha\cos\theta + (u\beta+v\alpha)\sin\theta \\
={}& \textrm{RHS}
\end{aligned}
\end{equation*}

\item
\begin{equation*}
\begin{aligned}
\textrm{LHS}
={}& 2 \sin\theta  \, (\cos^2\theta \,  u\alpha + \cos\theta \, \sin\theta \, (u\beta+v\alpha) + \sin^2\theta v\beta) \\
& + \cos\theta \, \brac{\cos^2\theta  \, u\beta + \cos\theta \, \sin\theta \, (v\beta-u\alpha) - \sin^2\theta  \, v\alpha} \\
& + \cos\theta \, \brac{\cos^2\theta  \, v\alpha + \cos\theta \, \sin\theta \, (-u\alpha+v\beta) - \sin^2\theta  \, u\beta} \\
={}& u\alpha\sin\theta \, (2\cos^2\theta \, -\cos^2\theta-\cos^2\theta) + v\alpha\cos\theta \, (2\sin^2\theta-\sin^2\theta+\cos^2\theta) \\
& + u\beta\cos\theta(2\sin^2\theta+\cos^2\theta-\sin^2\theta) + v\beta\sin\theta(2\sin^2\theta+\cos^2\theta+\cos^2\theta) \\
={}&
(u\beta+v\alpha)\cos\theta + 2v\beta\sin\theta = \textrm{RHS}
\end{aligned}
\end{equation*}

\end{enumerate}
\end{proof}

\section{Proof of Lemma \ref{lem:rotational-invariance-block-matrix-linear}}\label{app:proof-lem-rotational-invariance-block-matrix-linear}

\subsection{Proof of the first case}

For the first case, $A(V)$ only has two non-zero entries in the first column:
\begin{equation}
    A(V) = 
    \begin{pmatrix}
        a_{11}(V) & 0 \\
        a_{21}(V) & 0
    \end{pmatrix}.
\end{equation}
From the necessary condition given in Lemma \ref{lem:rotational-invariance-block-matrix-necessary-condition}, we have that $B(V)$ only has two non-zero entries in the second column:
\begin{equation}
    B(V) = 
    \begin{pmatrix}
        0 & b_{12}(V) \\
        0 & b_{22}(V)
    \end{pmatrix}.
\end{equation}
Now we compute
\begin{equation*}
\begin{aligned}
    T_2^{-1} A(T_2 V) T_2
    &=
    \begin{pmatrix}
        \cos\theta & -\sin\theta \\
        \sin\theta & \cos\theta
    \end{pmatrix}
    \begin{pmatrix}
        a_{11}(T_2 V) & 0 \\
        a_{21}(T_2 V) & 0
    \end{pmatrix}
    \begin{pmatrix}
        \cos\theta & \sin\theta \\
        -\sin\theta & \cos\theta
    \end{pmatrix}
    \\
    &=
    \begin{pmatrix}
        \cos\theta (\cos\theta a_{11}(T_2 V)-\sin\theta a_{21}(T_2 V)) & \sin\theta (\cos\theta a_{11}(T_2 V)-\sin\theta a_{21}(T_2 V)) \\
        \cos\theta (\sin\theta a_{11}(T_2 V)+\cos\theta a_{21}(T_2 V)) & \sin\theta (\sin\theta a_{11}(T_2 V)+\cos\theta a_{21}(T_2 V))
    \end{pmatrix}
\end{aligned}
\end{equation*}
which should be equal to
\begin{equation}
    \cos\theta A(V) + \sin\theta B(V) = 
    \begin{pmatrix}
        \cos\theta a_{11}(V) & \sin\theta b_{12}(V) \\
        \cos\theta a_{21}(V) & \sin\theta b_{22}(V)
    \end{pmatrix}.
\end{equation}
Then we have
\begin{equation}
\begin{aligned}
    \cos\theta (\cos\theta \, a_{11}(T_2V)-\sin\theta  \, a_{21}(T_2V)) &= \cos\theta  \, a_{11}(V), \\ 
    \sin\theta (\cos\theta  \, a_{11}(T_2V)-\sin\theta  \, a_{21}(T_2V)) &= \sin\theta  \, b_{12}(V), \\
    \cos\theta (\sin\theta  \, a_{11}(T_2V)+\cos\theta  \, a_{21}(T_2V)) &= \cos\theta  \, a_{21}(V), \\
    \sin\theta (\sin\theta  \, a_{11}(T_2V)+\cos\theta  \, a_{21}(T_2V)) &= \sin\theta  \, b_{22}(V).
\end{aligned}
\end{equation}
The above equations can be further simplified as
\begin{equation}\label{eq:simplified-constraint-case-1}
    \begin{aligned}
        \cos\theta a_{11}(T_2V)-\sin\theta a_{21}(T_2V) &=  a_{11}(V), \\ 
        \sin\theta a_{11}(T_2V)+\cos\theta a_{21}(T_2V) &=  a_{21}(V),
    \end{aligned}
\end{equation}
and
\begin{equation}
    \begin{aligned}
        a_{11}(V) &= b_{12}(V), \\
        a_{21}(V) &= b_{22}(V).
    \end{aligned}
\end{equation}

Since $a_{11}(V)$ and $a_{21}(V)$ are linear functions of $V$, we have
\begin{equation}\label{eq:linear-function-of-U-case-1}
    \begin{aligned}
        a_{11}(V) &= c_1 p + c_2 q, \\
        a_{21}(V) &= c_3 p + c_4 q,
    \end{aligned}
\end{equation}
where $c_1, c_2, c_3, c_4$ are constants.
Plugging \eqref{eq:linear-function-of-U-case-1} into \eqref{eq:simplified-constraint-case-1}, we have
\begin{equation*}
\begin{aligned}    
    \cos\theta (c_1 (\cos\theta p + \sin\theta q) + c_2 (-\sin\theta p + \cos\theta q))-\sin\theta (c_3 (\cos\theta p + \sin\theta q) 
    &+ c_4 (-\sin\theta p + \cos\theta q)) \\
    &=  c_1 p + c_2 q \\
    \sin\theta (c_1 (\cos\theta p + \sin\theta q) + c_2 (-\sin\theta p + \cos\theta q))+\cos\theta (c_3 (\cos\theta p + \sin\theta q) &+ c_4 (-\sin\theta p + \cos\theta q) \\
    &=  c_3 p + c_4 q    
\end{aligned}
\end{equation*}
which can be simplified as
\begin{equation*}
    \begin{aligned}        
    (c_1\cos^2\theta - (c_2+c_3)\cos\theta\sin\theta + c_4\sin^2\theta) p + (c_2\cos^2\theta + (c_1-c_4)\cos\theta\sin\theta - c_3\sin^2\theta) q &= c_1 p + c_2 q \\
    (c_3\cos^2\theta + (c_1-c_4)\cos\theta\sin\theta - c_2\sin^2\theta) p + (c_4\cos^2\theta - (c_2+c_3)\cos\theta\sin\theta + c_1\sin^2\theta) q &= c_3 p + c_4 q
    \end{aligned}
\end{equation*}
This implies
\begin{equation}
    \begin{aligned}
        c_1\cos^2\theta - (c_2+c_3)\cos\theta\sin\theta + c_4\sin^2\theta &= c_1 \\
        c_2\cos^2\theta + (c_1-c_4)\cos\theta\sin\theta - c_3\sin^2\theta &= c_2
    \end{aligned}
\end{equation}
Then we derive the following relations:
\begin{equation}
    c_1 = c_4, \quad c_2 = -c_3
\end{equation}
Therefore, we have
\begin{equation}
    \begin{aligned}
        a_{11}(V) &= c_1 p - c_2 q \\
        a_{21}(V) &= c_2 p + c_1 q
    \end{aligned}
\end{equation}
Thus, the matrices $A(V)$ and $B(V)$ are
\begin{equation}
    A(V) = 
    \begin{pmatrix}
        c_1 p + c_2 q & 0\\
        -c_2 p + c_1 q & 0
    \end{pmatrix}
    =
    c_1
    \begin{pmatrix}
        p & 0\\
        q & 0
    \end{pmatrix}
    +
    c_2
    \begin{pmatrix}
        q & 0 \\
        -p & 0
    \end{pmatrix},
\end{equation}
and
\begin{equation}
    B(V) = 
    \begin{pmatrix}
        0 & c_1 p + c_2 q \\
        0 & -c_2 p + c_1 q
    \end{pmatrix}
    =    
    c_1
    \begin{pmatrix}
        0 & p \\
        0 & q
    \end{pmatrix}
    +
    c_2
    \begin{pmatrix}
        0 & q \\
        0 & -p
    \end{pmatrix}.
\end{equation}

\subsection{Proof of the second case}

In this case, we assume that $A(V)$ is a diagonal matrix
\begin{equation*}
    A(V) = 
    \begin{pmatrix}
        a_{11}(V) & 0\\
        0 & a_{22}(V)
    \end{pmatrix}.
\end{equation*}
From Lemma \ref{lem:rotational-invariance-block-matrix-necessary-condition}, we have $B(V)$ is also a diagonal matrix
\begin{equation*}
    B(V) =
    \begin{pmatrix}
        b_{11}(V) & 0\\
        0 & b_{22}(V)
    \end{pmatrix}.
\end{equation*}
Then we compute
\begin{equation*}
    \begin{aligned}        
    T_2^{-1}A(T_2V)T_2 &=
    \begin{pmatrix}
        \cos\theta & -\sin\theta \\
        \sin\theta & \cos\theta
    \end{pmatrix}
    \begin{pmatrix}
        a_{11}(T_2V) & 0\\
        0 & a_{22}(T_2V)
    \end{pmatrix}
    \begin{pmatrix}
        \cos\theta & \sin\theta \\
        -\sin\theta & \cos\theta
    \end{pmatrix}
    \\
    &= 
    \begin{pmatrix}
        \cos^2\theta a_{11}(T_2V) + \sin^2\theta a_{22}(T_2V) & \cos\theta\sin\theta(a_{11}(T_2V) - a_{22}(T_2V)) \\
        \cos\theta\sin\theta(a_{11}(T_2V) - a_{22}(T_2V)) & \sin^2\theta a_{11}(T_2V) + \cos^2\theta a_{22}(T_2V)
    \end{pmatrix},
    \end{aligned}
\end{equation*}
which should be equal to
\begin{equation*}
    \cos\theta A(V) + \sin\theta B(V) =
    \begin{pmatrix}
        \cos\theta a_{11}(V) + \sin\theta b_{11}(V) & 0\\
        0 & \cos\theta a_{22}(V) + \sin\theta b_{22}(V)
    \end{pmatrix}.
\end{equation*}
Then we have
\begin{equation}
\begin{aligned}
    a_{11}(T_2V) &= a_{22}(T_2V), \\
    \cos^2\theta a_{11}(T_2V) + \sin^2\theta a_{22}(T_2V) &= \cos\theta a_{11}(V) + \sin\theta b_{11}(V), \\
    \sin^2\theta a_{11}(T_2V) + \cos^2\theta a_{22}(T_2V) &= \cos\theta a_{22}(V) + \sin\theta b_{22}(V).
\end{aligned}
\end{equation}
This is reduced to
\begin{equation}
\begin{aligned}
    a_{11}(V) &= a_{22}(V), \\
    b_{11}(V) &= b_{22}(V), \\
    a_{11}(T_2V) &= \cos\theta a_{11}(V) + \sin\theta b_{11}(V).
\end{aligned}    
\end{equation}

Next, we assume the linear functions
\begin{equation}
\begin{aligned}
    a_{11}(V) = a_{22}(V) &= c_1 p + c_2 q, \\
    b_{11}(V) = b_{22}(V) &= c_3 p + c_4 q,
\end{aligned}    
\end{equation}
where $c_1, c_2, c_3, c_4$ are constants. Then we have
\begin{equation}
\begin{aligned}
    c_1(\cos\theta p + \sin\theta q) + c_2(-\sin\theta p + \cos\theta q) = \cos\theta (c_1 p + c_2 q) + \sin\theta (c_3 p + c_4 q),
\end{aligned}    
\end{equation}
which means that
\begin{equation}
    c_1 = c_4, \quad c_2 = -c_3.
\end{equation}
Therefore, we have
\begin{equation}
    A(V) = 
    \begin{pmatrix}
        c_1 p + c_2 q & 0\\
        0 & c_1 p + c_2 q
    \end{pmatrix}
    = c_1
    \begin{pmatrix}
        p & 0\\
        0 & p
    \end{pmatrix}
    + c_2
    \begin{pmatrix}
        q & 0\\
        0 & q
    \end{pmatrix},
\end{equation}
and
\begin{equation}
    B(V) =
    \begin{pmatrix}
        c_3 p + c_4 q & 0\\
        0 & c_3 p + c_4 q
    \end{pmatrix}
    =
    -c_2
    \begin{pmatrix}
        p & 0\\
        0 & p
    \end{pmatrix}
    + c_1
    \begin{pmatrix}
        q & 0\\
        0 & q
    \end{pmatrix}.
\end{equation}

\subsection{Proof of the third case}

In the last case, we assume that $A(V)$ only has non-zero entries in the second column:
\begin{equation}
    A(V) =
    \begin{pmatrix}
        0 & a_{12}(V) \\
        0 & a_{22}(V) \\
    \end{pmatrix},
\end{equation}
and from Lemma \ref{lem:rotational-invariance-block-matrix-necessary-condition}, we have
\begin{equation}
    B(V) = 
    \begin{pmatrix}
        b_{11}(V) & 0\\
        b_{21}(V) & 0
    \end{pmatrix}.
\end{equation}
We compute
\begin{equation*}
\begin{aligned}    
    T_2^{-1} A(T_2V) T_2 &=
    \begin{pmatrix}
        \cos\theta & -\sin\theta \\
        \sin\theta & \cos\theta
    \end{pmatrix}
    \begin{pmatrix}
        0 & a_{12}(T_2V)\\
        0 & a_{22}(T_2V)\\
    \end{pmatrix}
    \begin{pmatrix}
        \cos\theta & \sin\theta \\
        -\sin\theta & \cos\theta
    \end{pmatrix}
    \\
    & =
    \begin{pmatrix}
        0 & \cos\theta a_{12}(T_2V) - \sin\theta a_{22}(T_2V)\\
        0 & \sin\theta a_{12}(T_2V) + \cos\theta a_{22}(T_2V)        
    \end{pmatrix}
    \begin{pmatrix}
        \cos\theta & \sin\theta \\
        -\sin\theta & \cos\theta
    \end{pmatrix}
    \\
    &=
    \begin{pmatrix}
        -\sin\theta(\cos\theta a_{12}(T_2V) - \sin\theta a_{22}(T_2V)) & \cos\theta(\cos\theta a_{12}(T_2V) - \sin\theta a_{22}(T_2V))\\
        -\sin\theta(\sin\theta a_{12}(T_2V) + \cos\theta a_{22}(T_2V)) & \cos\theta(\sin\theta a_{12}(T_2V) + \cos\theta a_{22}(T_2V))
    \end{pmatrix}    
\end{aligned}    
\end{equation*}
which should be equal to
\begin{equation}
    \cos\theta A(V) + \sin\theta B(V) =
    \begin{pmatrix}
        \sin\theta b_{11}(V) & \cos\theta a_{12}(V) \\
        \sin\theta b_{21}(V) & \cos\theta a_{22}(V)
    \end{pmatrix}.
\end{equation}

Therefore, we have
\begin{equation}
\begin{aligned}
    -(\cos\theta a_{12}(T_2V) - \sin\theta a_{22}(T_2V)) &= b_{11}(V) \\
    \cos\theta a_{12}(T_2V) - \sin\theta a_{22}(T_2V) &= a_{12}(V) \\
    -(\sin\theta a_{12}(T_2V) + \cos\theta a_{22}(T_2V)) &= b_{21}(V) \\
    \sin\theta a_{12}(T_2V) + \cos\theta a_{22}(T_2V) &= a_{22}(V)
\end{aligned}    
\end{equation}
which is reduced to
\begin{equation}
\begin{aligned}
    b_{11}(V) &= -a_{12}(V) \\
    b_{21}(V) &= -a_{22}(V) \\
    \cos\theta a_{12}(T_2V) - \sin\theta a_{22}(T_2V) &= a_{12}(V) \\
    \sin\theta a_{12}(T_2V) + \cos\theta a_{22}(T_2V) &= a_{22}(V)    
\end{aligned}    
\end{equation}
Next, we assume the linear function
\begin{equation}
\begin{aligned}
    a_{12}(V) &= c_1 p + c_2 q, \\
    a_{22}(V) &= c_3 p + c_4 q,
\end{aligned}
\end{equation}
where $c_1, c_2, c_3, c_4$ are constants.
Then we have
\begin{equation*}
\begin{aligned}
    \cos\theta (c_1 (\cos\theta p + \sin\theta q) + c_2 (-\sin\theta p + \cos\theta q)) - \sin\theta (c_3 (\cos\theta p + \sin\theta q) &+ c_4 (-\sin\theta p + \cos\theta q)) \\
    &= c_1 p + c_2 q, \\
    \sin\theta (c_1 (\cos\theta p + \sin\theta q) + c_2 (-\sin\theta p + \cos\theta q)) + \cos\theta (c_3 (\cos\theta p + \sin\theta q) &+ c_4 (-\sin\theta p + \cos\theta q)) \\
    &= c_3 p + c_4 q,
\end{aligned}
\end{equation*}
from which we solve out 
\begin{equation}
    c_1 = c_4, \quad c_2 = -c_3.
\end{equation}

Therefore, we have
\begin{equation}
    A(V) = 
    \begin{pmatrix}
        0 & c_1 p + c_2 q \\
        0 & -c_2 p + c_1 q
    \end{pmatrix}
    = c_1
    \begin{pmatrix}
        0 & p \\
        0 & q
    \end{pmatrix}
    + c_2
    \begin{pmatrix}
        0 & q \\
        0 & -p
    \end{pmatrix},
\end{equation}
and
\begin{equation}
    B(V) =
    \begin{pmatrix}
        -c_1 p - c_2 q & 0 \\
        c_2 p - c_1 q & 0
    \end{pmatrix}
    = c_1
    \begin{pmatrix}
        -p & 0 \\
        -q & 0
    \end{pmatrix}
    + c_2
    \begin{pmatrix}
        -q & 0 \\
        p & 0
    \end{pmatrix}.
\end{equation}

\end{document}